\def\titlerunning#1{\gdef\titrun{#1}}
\def\author#1{\gdef\autrun{\def\and{\unskip, }#1}\gdef\@author{#1}}
\def\address#1{{\def\and{\\\hspace*{18pt}}\renewcommand{\thefootnote}{}%
\footnote {#1}}%
\markboth{\autrun}{\titrun}}
\def\email#1{e-mail: #1}
\def\subjclass#1{{\renewcommand{\thefootnote}{}%
\footnote{\emph{Mathematics Subject Classification (2010):} #1}}}
\def\keywords#1{\par\medskip
\noindent\textbf{Keywords.} #1}
\newtheorem{thm}{Theorem}[section]
\newtheorem{cor}[thm]{Corollary}
\newtheorem{lem}[thm]{Lemma}
\newtheorem{prop}[thm]{Proposition}
\newtheorem{ex}[thm]{Example}
\newtheorem{prob}[thm]{Problem}
\theoremstyle{definition}
\newtheorem{defn}[thm]{Definition}
\theoremstyle{remark}
\newtheorem{rem}[thm]{Remark}
\numberwithin{equation}{section}
\newcommand{\D}{\mathbb{D}}
\newcommand{\F}{\mathbb{F}}
\newcommand{\U}{\mathbb{U}}
\newcommand{\Z}{\mathbb{Z}}
\newcommand{\FF}{\mathcal{F}}
\begin{document}
\baselineskip=17pt
\titlerunning{Highly symmetric KTSs whose orders fill a congruence class}
\title{The first families of highly symmetric Kirkman Triple Systems whose orders fill a congruence class}

\author{Simona Bonvicini 
\and 
Marco Buratti
\and
Martino Garonzi
\and
Gloria Rinaldi
\and
Tommaso Traetta
}

\date{}

\maketitle

\address{
S. Bonvicini: Dipartimento di Scienze Fisiche Informatiche Matematiche, Universit\`a di Modena e Reggio Emilia, via Campi 213/B I-41100 Modena, Italy; \email{simona.bonvicini@unimore.it}
\and
M. Buratti: Dipartimento di Matematica e Informatica, Universit\`a di Perugia, via Vanvitelli I-06123 Perugia,  Italy; \email{buratti@dmi.unipg.it}
\and
M. Garonzi: Departamento de Matem\'atica, Universidade de Bras\'ilia, Campus Universit\'ario Darcy Ribeiro, Bras\'ilia (DF - Brazil); \email{mgaronzi@gmail.com}
\and
G. Rinaldi: Dipartimento di Scienze e Metodi dell'Ingegneria,
Universit\`a di Modena e Reggio Emilia, viale Amendola 2, I-42122 Reggio Emilia, Italy;\email{gloria.rinaldi@unimore.it}
\and
Tommaso Traetta: DICATAM-Sezione Matematica, Universit\`a degli Studi di Brescia, via Branze 38, I-25123 Brescia Italy; \email{tommaso.traetta@unibs.it}
}

\subjclass{Primary 	05B07; Secondary 20B25}

\begin{abstract}
Kirkman triple systems (KTSs) are among the most popular combinatorial designs and their existence has been settled a long time ago. Yet,
in comparison with Steiner triple systems, little is known about their
automorphism groups.  In particular, there is no known congruence class representing the orders of
a KTS with a number of automorphisms at least close to the number of points. 
We fill this gap by proving that whenever $v \equiv 39$ (mod 72), or $v \equiv 4^e48 + 3$ (mod $4^e96$) and $e \geq 0$,
there exists a KTS on $v$ points having at least $v-3$ automorphisms.

This is only one of the consequences
of a careful investigation on the KTSs  with
an automorphism group $G$ acting sharply transitively on all but three points. 
Our methods are all constructive and yield KTSs which in many cases inherit 
some of the automorphisms of $G$, thus increasing the total number of symmetries.

To obtain these results it was necessary to introduce new types of difference families
(the doubly disjoint ones) and difference matrices (the splittable ones)
which we believe are interesting by themselves.

\keywords{Steiner triple system; Kirkman triple system; group action; difference family; difference matrix.}
\end{abstract}

\section{Introduction}
Steiner and Kirkman triple systems are undoubtedly amongst the most popular discrete structures.
A {\it Steiner triple system} of order $v$, briefly STS$(v)$, is a pair $(V,{\cal B})$ where $V$ is a set of $v$ points and
$\cal B$ is a set of 3-subsets ({\it blocks}) of $V$ with the property that any two distinct points are contained in
exactly one block. A {\it Kirkman triple system} of order $v$, briefly KTS$(v)$, is an STS$(v)$ together with a {\it resolution} $\cal R$ of its block-set $\cal B$,
that is, a partition of ${\cal B}$ into classes ({\it parallel classes}) each of which is, in its turn, a partition of the point-set $V$.
It has been known since the mid-nineteenth century that
a STS$(v)$ exists if and only if $v\equiv 1$ or 3 (mod 6) \cite{Kirkman47}. 
The analogous result for KTSs has been instead obtained more than a century later 
\cite{RCW}: a KTS$(v)$ exists if and only if $v\equiv 3$ (mod 6). 

An automorphism of a STS is a permutation of its points leaving the block-set invariant.
Analogously, an automorphism of a KTS is a permutation of its points leaving the resolution invariant.
Thus an automorphism of a KTS is automatically an automorphism of the underlying STS though the converse is not true in general.

For general background on these topics we refer to \cite{CR}.

\medskip
In the next section we will give a brief survey of the automorphism groups of STSs and KTSs.
Looking at the results it will be evident how little we know about KTSs in comparison with STSs. For instance,
it is has been known for close to a century that there is an STS$(v)$ with an automorphism group of order $v$ for any admissible $v$.
Yet, we are still quite far from a similar result for KTSs. At the moment the existence of a KTS$(v)$ with an automorphism group of order
$v$ or $v-1$ is known only when $v$ has a special prime factorization. In particular, there is no known 
congruence $v\equiv k$ (mod $n$) which guarantees the existence of a KTS$(v)$ with an automorphism group
of order close to $v$. 

Adopting the combinatorial-analog of the famous {\it Erlangen program} by Felix Klein \cite{Hawkins}, 
we believe that the interest of a discrete structure is proportional to the number of its automorphisms. 
Motivated by this and by the shortage of results mentioned above, in this paper we deeply investigate Kirkman triple systems which are {\it $3$-pyramidal}, i.e., admitting an automorphism group acting sharply 
transitively on all but three points. 

Now we make a short digression to explain why adopting this ``philosophy" also brings practical benefits.
Given the definition of a discrete structure, the first natural target, which could be very difficult, is to determine under which constraints it exists. The second 
is to give an explicit construction for this structure; in some cases this could be even more difficult. 
For instance, thanks to the recent seminal work of P. Keevash \cite{K}, it is known that a {\it Steiner $t$-design} exists provided that the trivial necessary conditions 
are satisfied and that its order is sufficiently large (the case $t=2$, due to R.M. Wilson \cite{W1,W2,W3}, dates back to the 70's). This is an outstanding achievement which 
seemed completely out of reach only a decade ago; yet, the probabilistic methods used by Keevash are non-constructive,
and do not provide an explicit lower bound on the order of a $t$-design that guarantees its existence.
On the contrary, the request to have many symmetries, besides being in compliance with the Erlangen program, allows to develop constructive algebraic methods. 
This paper gives the complete recipe to construct, explicitly, a KTS$(v)$ for any $v$ as in (i), (ii), (iii) of our main result below.

\begin{thm}\label{main}
A necessary condition for the existence of a $3$-pyramidal KTS$(v)$ is that $v=24n+9$ or $v=24n+15$ or
$v=48n+3$ for some $n$ which, in the last case, must be of the form $4^em$ with $m$ odd. 
This condition is also sufficient in each of the following cases:
\begin{itemize}
\item[\rm(i)] $v=24n+9$ and $4n+1$ is a sum of two squares;
\item[\rm(ii)] $v=24n+15$ and either $2n+1\equiv0$ $($mod $3)$ or the square-free part of $2n+1$ does not have any prime $p\equiv11$ $($mod $12)$;
\item[\rm(iii)] $v=48n+3$.
\end{itemize}
\end{thm}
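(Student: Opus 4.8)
The plan is to handle the two directions separately; the sufficiency is where the real work lies.

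\medskip
\noindent\emph{Necessity.} Let $(V,\mathcal{B},\mathcal{R})$ be a $3$-pyramidal KTS$(v)$ with group $G$ and set of fixed points $F=\{\infty_1,\infty_2,\infty_3\}$; identify $V\setminus F$ with $G$ via the regular action, so $|G|=v-3$. First I would pin down the rigid structure this forces. Since $G$ fixes $\infty_1$ and $\infty_2$ it fixes the block through them, whose third point must then be fixed, so $F\in\mathcal{B}$. For each $i$, the blocks through $\infty_i$ cut out a $G$-invariant perfect matching on $G$, which one checks must equal $\{\{g,x_ig\}:g\in G\}$ for an involution $x_i\in G$, and the $x_i$ are pairwise distinct. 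The parallel class containing $F$ is $G$-invariant, so removing $F$ leaves a $G$-invariant partition of $G$ into triples; this must consist of the cosets of a subgroup $K$ of order $3$, whence $6\mid v-3$ (recovering the classical necessary condition). An orbit computation then shows that the stabilizer of any other parallel class has order $\le 2$ and, consequently, that all parallel classes other than the one through $F$ form a single $G$-orbit of length $(v-3)/2$, so that $x_1,x_2,x_3$ are conjugate in $G$. The remaining — and most delicate — step is to extract the full congruence: one must track how the involutions and the order-$2$ parallel-class stabilizers interact with a Sylow $2$-subgroup and with the $3$-subgroup $K$, and squeeze out the $2$-adic restriction (in effect $v_2(v-3)\in\{1,2\}\cup\{4,6,8,\dots\}$, together with an extra congruence mod $4$ of the odd part when $v_2(v-3)=1$). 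This is exactly the trichotomy $v=24n+9$, $v=24n+15$, $v=48n+3$ with $n=4^em$ and $m$ odd.

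\medskip
\noindent\emph{Sufficiency: the reduction.} Conversely, reversing this dictionary, to build a $3$-pyramidal KTS$(v)$ it suffices to exhibit: a group $G$ with $|G|=v-3$ of the right Sylow $2$-type, a subgroup $K$ of order $3$, a conjugacy class $\{x_1,x_2,x_3\}$ of involutions, and a family $\mathcal{D}$ of base triples of $G$ whose $G$-translates, together with the cosets of $K$, the triples $\{\infty_i,g,x_ig\}$ and the block $F$, cover every pair exactly once; and $\mathcal{D}$ must split into base parallel classes whose $G$-translates, together with the class through $F$, constitute the resolution. This is precisely what the paper calls a \emph{resolvable, doubly disjoint} difference family: ``doubly disjoint'' encodes both that the base triples tile the relevant differences once (so that an STS results) and that the triples within each base class are already pairwise disjoint (so that the translated classes are genuine partitions of $V\setminus F$). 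A natural choice of $G$ is a direct product $S_3\times H$ with $|H|=(v-3)/6$ (or a suitable variant): then $K$ and the three conjugate transpositions of the $S_3$-factor are automatic, the Sylow $2$-subgroup of $G$ is $\mathbb{Z}_2$ times that of $H$, and everything is reduced to constructing the required difference family, or splittable difference matrix, over $H$.

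\medskip
\noindent\emph{Sufficiency: the three cases.} I would then argue according to the $2$-part of $|H|$. In case (i) $|H|=4n+1$ is odd, and the hypothesis that $4n+1$ is a sum of two squares is exactly the arithmetic condition — multiplicative over the primes dividing $4n+1$, hence reducible to a cyclotomic statement at each prime — that guarantees the existence of the needed resolvable, doubly disjoint triple difference family over an abelian group of that order. Case (ii), $|H|=2(2n+1)$, is treated in the same spirit, with the hypothesis ($3\mid 2n+1$, or the square-free part of $2n+1$ free of primes $\equiv 11\pmod{12}$) playing the role of the corresponding solvability criterion. Case (iii), $v=48n+3$ with $n=4^em$ and $m$ odd, is the one requiring the new machinery: I would prove the existence of a \emph{splittable} difference matrix over a suitable group and use it to drive a recursion that quadruples, passing from exponent $e$ to $e+1$, so that it remains only to settle $e=0$, i.e.\ $n=m$ odd, which a direct construction of a doubly disjoint difference family handles. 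I expect the main obstacle, in all three cases, to be the explicit construction of these difference families and splittable matrices in the stated generality, together with the verification that the doubly disjoint and splittable conditions really are strong enough to make the translated base classes pairwise-disjoint parallel classes exhausting the resolution. The number-theoretic hypotheses of (i) and (ii), and the $4^e$ shape in (iii), are precisely what is needed to get those constructions off the ground.
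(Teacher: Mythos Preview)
Your necessity outline is broadly right and matches the paper's: the three conjugate involutions, the order-$3$ subgroup giving $6\mid v-3$, and a single $G$-orbit on the non-fixed parallel classes all appear. But the ``most delicate step'' you gesture at is an entire section of pure group theory in the paper (their Section~4): one must prove that a group with exactly three, pairwise conjugate involutions and $2$-part $2^n$, $n\ge 2$, necessarily has $n$ even. This is not an orbit-count or a Sylow bookkeeping exercise; it needs Frattini arguments and a careful induction, and you have not indicated any mechanism that would produce the parity restriction on $n$.

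The more serious gap is in your sufficiency framework. Your choice $G=S_3\times H$ with $|H|=(v-3)/6$ works \emph{only} in case (i), where $|H|=4n+1$ is odd. In case (ii) you have $|H|=2(2n+1)$, and then $S_3\times H$ has at least seven involutions (the three transpositions, the involution of $H$, and their products), so it is never pertinent; the same obstruction kills case (iii). The phrase ``or a suitable variant'' hides exactly the construction that drives the paper: a family $G_\alpha$ of semidirect products $\mathbb{Z}_3\ltimes\mathbb{Z}_{2^\alpha}^2$ (with $G_1\cong A_4$) having precisely three conjugate involutions, and the working groups are $G_\alpha\times(\text{odd order})$, not $S_3\times(\text{anything})$. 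Without these groups there is no candidate $G$ in cases (ii) and (iii), so your constructions cannot begin.

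Finally, you have conflated two distinct notions. What you call ``resolvable, doubly disjoint'' and describe as encoding both the difference and the partition conditions is what the paper calls a \emph{$J$-resolvable} difference family (Definition~3.4): the flatten of the family, together with three specified elements, represents the left cosets of a $2$-subgroup $J$. The paper's ``doubly disjoint'' (Section~9) is a genuinely different auxiliary gadget---a DF whose blocks, together with those of a translate-equivalent DF, tile $G\setminus H$---used only inside one recursive step. Splittable difference matrices are likewise not the primary object but an ingredient that, combined with doubly disjoint DFs in a quotient, manufactures $J$-resolvable DFs (their Theorem~10.5). Your sketch of (iii) has the right flavour (recursion in the $4^e$ parameter via matrix-type constructions) but the moving parts are misidentified.
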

In particular, (ii) and (iii) allow us to reach our main target of getting some families of highly symmetric KTSs 
whose orders fill a congruence class.
\begin{cor}
There exists a $3$-pyramidal KTS$(v)$ for all $v\equiv 39$ $($mod $72)$,  and 
for all $v\equiv4^e48+3$ $($mod $4^e96)$ whatever the non-negative integer $e$ is.
\end{cor}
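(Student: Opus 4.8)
The Corollary asks for $3$-pyramidal KTS$(v)$ in two infinite arithmetic progressions, so the strategy is simply to check that every $v$ in each progression falls under case (ii) or case (iii) of the Theorem, and then invoke the sufficiency asserted there. No new construction is needed; the work is purely number-theoretic bookkeeping on congruence classes.

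\textbf{First}, for the family $v \equiv 39 \pmod{72}$: writing $v = 24n+15$, the congruence $v \equiv 39 \pmod{72}$ forces $24n+15 \equiv 39 \pmod{72}$, i.e. $24n \equiv 24 \pmod{72}$, i.e. $n \equiv 1 \pmod 3$. Then $2n+1 \equiv 2\cdot 1 + 1 = 3 \equiv 0 \pmod 3$, so the first alternative in hypothesis (ii) of Theorem \ref{main} is satisfied. Hence a $3$-pyramidal KTS$(v)$ exists for every such $v$. I would present this as a one-line verification. (One should also note in passing that $v = 39$ itself, the smallest member, is covered, and that the progression is nonempty and consists of admissible orders $v \equiv 3 \pmod 6$ — indeed $39 \equiv 3 \pmod 6$ and $72 \equiv 0 \pmod 6$.)

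\textbf{Second}, for the family $v \equiv 4^e 48 + 3 \pmod{4^e 96}$ with $e \geq 0$ fixed: here I would show every such $v$ has the shape $v = 48n+3$ with $n = 4^e m$, $m$ odd, which is exactly the form required in case (iii) of the Theorem. Indeed $v \equiv 4^e 48 + 3 \pmod{4^e 96}$ means $v - 3 \equiv 4^e 48 \pmod{4^e 96}$, so $v - 3 = 4^e 96 k + 4^e 48 = 4^e 48(2k+1)$ for some integer $k \geq 0$; thus $v = 48n + 3$ with $n = 4^e(2k+1) = 4^e m$ and $m = 2k+1$ odd. So $n$ has precisely the prescribed form, hypothesis (iii) applies with no further condition, and Theorem \ref{main} delivers the $3$-pyramidal KTS$(v)$. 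Again this is a short computation.

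\textbf{The main (and only) obstacle} is essentially notational: one must handle the factor $4^e$ carefully so as to match the ``$n = 4^e m$ with $m$ odd'' clause in the necessary condition of Theorem \ref{main}, confirming not merely that a KTS exists but that these orders are genuinely of the admissible type for case (iii). Since $2k+1$ ranges over all positive odd integers as $k$ ranges over $\mathbb{Z}_{\geq 0}$, the description is tight. I expect the whole proof of the Corollary to be two or three lines once Theorem \ref{main} is in hand; accordingly I would simply write: ``By Theorem \ref{main}(ii), since $v\equiv39\pmod{72}$ gives $n\equiv1\pmod3$ hence $2n+1\equiv0\pmod3$; and by Theorem \ref{main}(iii), since $v\equiv4^e48+3\pmod{4^e96}$ gives $v=48n+3$ with $n=4^em$, $m$ odd.'' $\square$
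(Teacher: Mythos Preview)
Your proposal is correct and matches the paper's approach exactly: the paper states the Corollary immediately after Theorem \ref{main} with the remark ``In particular, (ii) and (iii) allow us to reach our main target,'' and gives no further proof. Your explicit arithmetic---showing $v\equiv39\pmod{72}$ forces $2n+1\equiv0\pmod3$ in the parametrization $v=24n+15$, and that $v\equiv4^e48+3\pmod{4^e96}$ forces $v=48n+3$ with $n=4^e m$, $m$ odd---is precisely the intended (and only) verification.
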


We point out that in many cases the constructed 3-pyramidal KTSs inherit
some  automorphisms of the group $G$ acting sharply transitively on all but
three points. This permits to increase their number of symmetries
 considerably (see Remarks \ref{rem(i)}, \ref{rem(ii)} and \ref{rem(iii)}).

After the brief survey of Section 2 concerning some results on the automorphism groups of Steiner and Kirkman triple systems, 
the article will be structured as follows.
In Section 3 we provide the difference methods to construct a 3-pyramidal KTS and
we prove that each group having a 3-pyramidal action on a KTS fixes one parallel class
and acts transitively on the remaining ones. We also prove that such a group must have  
exactly three involutions, and these involutions are pairwise conjugate. Groups
with this property will be called {\it pertinent}. 
Although in literature there is no lack of articles 
on groups with three involutions (see, for example, \cite{Janko, Konvisser}), none of them
allows us to determine the set of ``relevant'' orders.
We prove (Theorem \ref{PertinentOrders}) that such orders
are precisely those of the form $12n + 6$ or $4^\alpha (24n + 12)$,
and from this we partially derive the necessary condition in Theorem  \ref{main}. 
The proof of the
``only if part'' of Theorem \ref{PertinentOrders} is purely group theoretical and for this reason the whole Section 4 is dedicated to it. 
However,
its reading is  not necessary for understanding
the rest of the article, whose nature is purely combinatorial.

The constructive part of Theorem \ref{main} will be proven in Section 11 and it is the result
of numerous direct constructions (Sections 6, 7 and the appendix) and recursive constructions (Sections 8, 9, 10).  These results are preceded
by a brief section (Section 5) useful for understanding the notation and terminology 
used throughout the rest of the paper.
The recursive constructions required the introduction of new concepts such as
{\it doubly disjoint difference family} and {\it splittable difference matrix}
which we believe may be important by themselves.

The article concludes (Section 12) with a short list of open problems.

\section{A brief survey of the automorphism groups of Steiner and Kirkman triple systems}
\noindent

The literature on Steiner and Kirkman triple systems having an automorphism with a prescribed property or an automorphism group
with a prescribed action is quite extensive. 

For instance the set of values of $v$ for which there exists an STS$(v)$ with an involutory automorphism fixing
exactly one point ({\it reversed} STS) has been established in \cite{Doyen,Rosa,T}: it exists if and only if
$v\equiv 1, 3, 9$ or 19 (mod 24).
Results concerning the {\it full} automorphism group of a STS have been obtained by Mendelsohn \cite{Mend}
and Lovegrove \cite{L}.

Here, we just provide a brief survey of what is known on the existence of systems 
whose automorphisms are at least close to the number of points.

Adopting a terminology coined by E. Mendelsohn and A. Rosa \cite{MendRosa}, we say that a 
combinatorial design is {\it $f$-pyramidal}  if it admits an automorphism group $G$ fixing
$f$ points and acting sharply transitively on the others. If $f=0$ one usually speaks of a
{\it regular} design and, more specifically, of a {\it cyclic design} if the group $G$ is cyclic.
If $f=1$ one usually speaks of a {\it $1$-rotational design}.

It was proved a long time ago \cite{P} that there exists a cyclic STS$(v)$ for all admissible values
of $v$ except $v=9$. On the other hand the unique STS(9), that is the point-line design
associated with the affine plane of order 3, is clearly regular under the action of $\Z_3^2$.
Thus there exists a regular STS$(v)$ for all admissible values of $v$.

The analogous problem of determining the set of values of $v$ for which there exists a regular KTS$(v)$
is almost completely open and it appears to be very difficult. The few known results on this problem are the following. 
The parallel classes of the point-line design associated with the $n$-dimensional 
affine space over the field of order 3 clearly give a KTS$(3^n)$ that is regular under the action of $\Z_3^n$. A necessary condition
given in \cite{MR} for the existence of a cyclic KTS$(6n+3)$ is that $2n+1$
is not a prime power congruent to 5 (mod 6). This condition is also sufficient up to $n=32$ \cite{MR} 
and when all prime factors of $n$ are congruent to 1 (mod 6) \cite{GMJ}.

The existence of a $1$-rotational STS$(v)$ has been thoroughly investigated in
\cite{BBRT, B, M, PR} leaving the problem open only for the orders $v$
satisfying, simultaneously, the following conditions: $v=(p^3-p)n+1\equiv1$ (mod 96) with $p$ a prime;
$n\not\equiv0$ (mod 4); the odd part of $v-1$ is square-free and without prime factors $\equiv1$ (mod 6).

The $1$-rotational KTSs have a very nice structure. Indeed a group with a 1-rotational action on them
(necessarily {\it binary}, i.e., admitting exactly one involution) is transitive on the parallel classes.
On the other hand, as in the regular case, very little is known about their existence which has 
been proved only for orders $v$ of the the following types: $v$ is a power of $3$ (the already mentioned
regular KTS$(3^n)$ is also 1-rotational); all prime factors of ${v-1\over2}$ are congruent to $1$ (mod $12$) \cite{BZ};
$v=8n+1$ with all the prime factors of $n$ congruent to $1$ (mod $6$) \cite{BZ01}.

The existence problem for 3-pyramidal STSs was completely settled in \cite{BRT}: there exists
a $3$-pyramidal STS($v$) if and only if $v\equiv 7,9,15$ (mod $24$) or $v\equiv 3,19$ (mod $48$).

As an obvious consequence of the above result, a 3-pyramidal KTS$(v)$ may exist only when $v\equiv9$ (mod 24) or $v\equiv15$ (mod 24) 
or $v\equiv3$ (mod 48). 
The main result of this paper (Theorem \ref{main}) provides, in particular, a complete answer in the last case $v\equiv3$ (mod 48). 

Finally, a STS$(v)$ is called  {\it $1$-transrotational} if it has an
automorphism group $G$ that fixes exactly one point, switches two points,
and acts sharply transitively on the remaining $v-3$. This terminology was
first used in \cite{Gardner} under the assumption that $G$ is cyclic,
though $G$ just need to be binary. One cannot fail to notice a certain kinship
between 3-pyramidal and 1-transrotational STSs, but apart from the fact
that their groups are deeply different, the sets of orders for which they
exist do not coincide. Indeed it
was proved in \cite{Gardner} that a 1-transrotational STS$(v)$ under the
cyclic group exists if and only if $v\equiv 1, 7, 9$ or $15$ (mod 24). It
is easy to check that the same holds if we remove the assumption that the
group be cyclic.
As far as we are aware, nobody studied 1-transrotational KTS$(v)$.
Considering the above, they might exist only for $v\equiv 9$ or 15 (mod 24)
but it is not difficult to exclude the case $v\equiv15$ (mod 24). This will
be shown in a paper in preparation \cite{BN} where we will deal with the
case $v\equiv 9$ (mod 24).

\section{Difference families and $3$-pyramidal Kirkman\break triple systems}\label{DFs}
In this section we show that the existence of a $3$-pyramidal KTS over a group $G$
is equivalent to constructing a suitable {\it difference family} (DF) in $G$ relative to a {\it partial spread}, a concept introduced by the second author in \cite{B}.
We point out that throughout the paper, except for Section \ref{pertinent}, every group will be denoted additively.

A partial spread of a group $G$ is a set $\Sigma$ of subgroups of $G$ whose mutual intersections are trivial.
If $\tau=\{d_1^{e_1}, \dots, d_n^{e_n}\}$ is the multiset
(written in ``exponential" notation) of the orders of all subgroups belonging to $\Sigma$,
we say that $\Sigma$ is of {\it type} $\tau$ or a $\tau$-partial spread.
A {\it spread} (or {\it partition}) of $G$ is a partial spread whose members between them cover the whole group $G$.

The {\it list of differences} of a triple $B=\{x,y,z\}$ of elements of $G$ is the multiset $\Delta B$ of size 6 defined by 
$$\Delta B=\{\pm(x-y),\pm(x-z),\pm(y-z)\}.$$
The list of differences of a family $\FF$ of triples of $G$, denoted by $\Delta\FF$, is the multiset union of the lists
of differences of all its triples.
Also, the {\it flatten} of ${\cal F}$, denoted by $\Phi(\FF)$, is the multiset union of all the triples of $\FF$.
$$\Delta\FF=\bigcup_{B\in\FF}\Delta B;\quad\quad \Phi(\FF)=\bigcup_{B\in\FF} B.$$

A $(G, \Sigma, 3, 1)$ difference family (DF) is a family $\FF$ of triples of $G$
({\it base blocks}) whose list of differences is the set of all elements of $G$ not belonging to any member of the partial spread $\Sigma$.
If $\Sigma=\{H\}$ we write $(G,H,3,1)$-DF or simply $(G,3,1)$-DF when $H=\{0\}$. 
If $\Sigma$ is a partial spread of type $\tau$, we also use the notation $(G, \tau, 3, 1)$-DF.

If ${\cal F}$ is a $(G,H,3,1)$-DF, then its size is clearly equal to ${|G\setminus H|\over 6}$ and then its flatten $\Phi({\cal F})$ has size 
${|G\setminus H|\over 2}$. Thus, if $J$ is a subgroup of $H$ of order 2, one can ask whether $\Phi({\cal F})$ is a complete system of representatives
for the left cosets of $J$ that are not contained in $H$. In the affirmative case we say that ${\cal F}$ is $J$-resolvable.

\begin{defn}\label{rdf}
Let $\cal F$ be a $(G,H,3,1)$-DF and let $J$ be a subgroup of $H$ of order 2. We say that $\cal F$ is {\it $J$-resolvable} if
its flatten is a complete system of representatives for the left cosets of $J$ in $G$ that are not contained in $H$. So, equivalently, if
we have $\Phi(\FF)+J=G\setminus H$.
\end{defn}

We note that the development of a $J$-resolvable $(G,H,3,1)$-DF is a {\it Kirkman frame} \cite{S} admitting $G$ as a sharply point transitive 
automorphism group.

A {\it multiplier} of a $J$-resolvable $(G,H,3,1)$-DF, say $\cal F$, is an automorphism 
$\mu$ of $G$ leaving $\cal F$ invariant. We say that $\mu$ is a {\it strong} multiplier if it fixes
$H$ element-wise.

The following fact is straightforward. 

\begin{prop}\label{matrioska}
Let $G=H_n\geq \dots \geq H_1 \geq J$ be a chain of subgroups of $G$ with $J$ of order $2$.
If there exists a $J$-resolvable $(H_{i+1},H_i,3,1)$-DF, say ${\cal F}_i$, 
for $1\leq i\leq n-1$, 
then ${\cal F} = \displaystyle\bigcup_{i=1}^{n-1} {\cal F}_i$
is a $J$-resolvable $(G,H_1,3,1)$-DF.

Furthermore, 
${\cal F}$ inherits the strong multipliers of ${\cal F}_{n-1}$.
\end{prop}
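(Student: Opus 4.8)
The plan is to verify directly the two defining properties of a $J$-resolvable $(G,H_1,3,1)$-DF, exploiting that the subgroups form a chain, and then to transfer the strong multipliers. Note first that $H_1\ge J$ with $|J|=2$, so the notions ``$(G,H_1,3,1)$-DF'' and ``$J$-resolvable'' make sense for $\FF$; note also that for a subgroup $H_{i+1}\le G$ the differences and sums of elements of $H_{i+1}$ are the same whether computed inside $H_{i+1}$ or inside $G$, so that $\Delta\FF=\bigcup_{i=1}^{n-1}\Delta\FF_i$ and $\Phi(\FF)=\bigcup_{i=1}^{n-1}\Phi(\FF_i)$ as multisets over $G$. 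To see that $\FF$ is a $(G,H_1,3,1)$-DF, use that each $\FF_i$ is a $(H_{i+1},H_i,3,1)$-DF, so $\Delta\FF_i=H_{i+1}\setminus H_i$; since $H_1\le H_2\le\cdots\le H_n=G$, the ``annuli'' $H_{i+1}\setminus H_i$ ($1\le i\le n-1$) are pairwise disjoint and their union is $G\setminus H_1$, whence $\Delta\FF=\bigcup_{i=1}^{n-1}(H_{i+1}\setminus H_i)=G\setminus H_1$ with every element occurring exactly once. This is precisely the condition that $\FF$ be a difference family relative to the partial spread $\{H_1\}$.

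For $J$-resolvability, each $\FF_i$ is $J$-resolvable as a $(H_{i+1},H_i,3,1)$-DF, so Definition \ref{rdf} gives $\Phi(\FF_i)+J=H_{i+1}\setminus H_i$. Taking the union over $i$ and using once more that the annuli partition $G\setminus H_1$,
$$\Phi(\FF)+J=\bigcup_{i=1}^{n-1}\bigl(\Phi(\FF_i)+J\bigr)=\bigcup_{i=1}^{n-1}(H_{i+1}\setminus H_i)=G\setminus H_1,$$
so $\FF$ is $J$-resolvable by Definition \ref{rdf}; a cardinality count, $|\Phi(\FF)|=\sum_{i=1}^{n-1}|H_{i+1}\setminus H_i|/2=|G\setminus H_1|/2$, is consistent with this.

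It remains to handle the multipliers. A strong multiplier $\mu$ of $\FF_{n-1}$ is, by definition, an automorphism of $H_n=G$ that leaves $\FF_{n-1}$ invariant and fixes $H_{n-1}$ element-wise; since $H_1\le H_{n-1}$, it a fortiori fixes $H_1$ element-wise, so the only point to check is that $\mu$ leaves $\FF$ invariant. It does so on $\FF_{n-1}$ by hypothesis, and for $1\le i\le n-2$ every block of $\FF_i$ is a triple of $H_{i+1}\le H_{n-1}$, hence is fixed element-wise by $\mu$; thus $\mu$ fixes each such $\FF_i$ (indeed block by block) and consequently permutes $\FF=\bigcup_{i=1}^{n-1}\FF_i$, i.e., $\mu$ is a strong multiplier of $\FF$. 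I do not expect a genuine obstacle here: the only points needing attention are that it is the chain condition which makes the annuli $H_{i+1}\setminus H_i$ pairwise disjoint — so that the multiset unions defining $\Delta\FF$ and $\Phi(\FF)$ hide no repetitions — and that a strong multiplier of the top family $\FF_{n-1}$, being the identity on $H_{n-1}$, automatically acts trivially on every lower family, all of which live inside $H_{n-1}$.
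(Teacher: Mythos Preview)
Your proof is correct and is exactly the natural direct verification the paper has in mind; the paper itself does not give a proof, merely stating that ``the following fact is straightforward.'' Your argument makes explicit the two points the paper leaves implicit: that the chain condition forces the annuli $H_{i+1}\setminus H_i$ to partition $G\setminus H_1$, and that a strong multiplier of $\FF_{n-1}$ fixes $H_{n-1}$ elementwise and hence acts trivially on every lower $\FF_i$.
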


Difference families are a crucial topic in Design Theory \cite{BJL,CD}. In particular, as a special case of 
Theorem 2.1 in  \cite{BRT}, it is possible to characterize the $3$-pyramidal STS($6n+3$) in
terms of difference families as follows.

\begin{thm}
There exists a $3$-pyramidal STS$(6n+3)$ if and only if there exists a $(G,$ $\{2^3, 3^e\}, 3, 1)$-DF for a suitable group 
$G$ of order $6n$ with exactly three involutions, and a suitable integer $e$.
\end{thm}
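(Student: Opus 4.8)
The plan is to establish the claimed equivalence between $3$-pyramidal STS$(6n+3)$ and the existence of a suitable relative difference family, by unwinding the definition of $3$-pyramidality and applying the general characterization from \cite{BRT} (Theorem 2.1 there). First I would recall that a $3$-pyramidal STS$(6n+3)$ is, by definition, an STS on $6n+3$ points admitting an automorphism group $\Gamma$ fixing three points, say $\infty_1,\infty_2,\infty_3$, and acting sharply transitively on the remaining $6n$ points; identifying this latter point set with a group $G$ of order $6n$ (with $\Gamma$ acting by right translation), the three fixed points must be permuted among themselves by any automorphism of the STS, and a standard counting of short orbits — together with the structure of the blocks through the $\infty_i$'s — forces $G$ to have exactly three involutions (each $\infty_i$ together with a pair $\{g, g+\iota_i\}$ forms a block, where $\iota_i$ ranges over the involutions). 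This is precisely the parity/involution bookkeeping that is isolated in \cite{BRT}.

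Next I would carry out the translation of the block orbits into difference-family language. The blocks of the STS split into: the $\Gamma$-orbit of a block on the three fixed points (if present), the three orbits of ``mixed'' blocks $\{\infty_i\}\cup(\text{coset of }\langle\iota_i\rangle)$, and the orbits of ``genuine'' blocks contained in $G$. The mixed blocks account exactly for the differences lying in the three subgroups $\langle\iota_1\rangle,\langle\iota_2\rangle,\langle\iota_3\rangle$ of order $2$, while the short-orbit blocks contained in $G$ (those fixed by some nontrivial element) contribute the subgroups of order $3$; concretely one gets $e$ such orbits fixed by elements of order $3$, contributing a subgroup of order $3$ each. What remains — the differences that must be covered by the base blocks of a $(G,\{2^3,3^e\},3,1)$-DF — is exactly the complement in $G$ of the union of these $\{2^3,3^e\}$-partial spread members. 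Conversely, given such a DF, developing its base blocks over $G$ and adjoining the mixed and fixed-point blocks reconstructs a $3$-pyramidal STS$(6n+3)$; one checks that every pair of points is covered exactly once, using the defining property of the DF for pairs inside $G$ and the partial-spread condition for pairs involving an $\infty_i$.

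I would present this as a direct specialization: state that Theorem 2.1 of \cite{BRT} characterizes $f$-pyramidal Steiner systems in terms of difference families relative to a partial spread whose type is dictated by the stabilizer structure of the short orbits, and then simply verify that for $f=3$, $\lambda=1$, block size $3$, the relevant short stabilizers have orders $2$ (three of them, one per fixed point, governed by the three involutions) and $3$ (some number $e\geq 0$ of them), giving type $\{2^3,3^e\}$. The group $G$ has order $(6n+3-3)=6n$ and the three-involution condition is inherited from the requirement that the three fixed points be interchangeable under $\Gamma$ while each lies in short orbits of blocks.

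The main obstacle — really the only non-routine point — is pinning down \emph{why} exactly three subgroups of order $2$ and exactly $e$ subgroups of order $3$ appear, and no other short orbits are possible. This rests on the arithmetic of block stabilizers: a block of a triple system can be stabilized only by elements of order $1$, $2$, or $3$, a short block-orbit through a fixed point forces an involution acting as $(\infty_i)(x\ x+\iota)\cdots$, and there must be precisely one such involution per fixed point because the three fixed points are a $\Gamma$-orbit (or union of fixed points) and the total number of involutions in $G$ is constrained to be exactly three by the global point/block count of the STS. Once this stabilizer analysis is in place, the equivalence is a formal consequence of the general theorem, and the converse direction is the standard ``develop the base blocks'' verification, which I would only sketch.
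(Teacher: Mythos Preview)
Your approach matches the paper's exactly: the paper does not prove this theorem independently but simply states it as a special case of Theorem~2.1 in \cite{BRT}, and you do the same while supplying the details of the specialization. Your orbit bookkeeping (blocks through the $\infty_i$ force involutions, short orbits in $G$ force order-$3$ subgroups, everything else is covered by the DF) is correct.

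One wording slip to fix: you write that ``the three fixed points are a $\Gamma$-orbit'' and that the three-involution condition comes from the fixed points being ``interchangeable under $\Gamma$.'' This is not right: by definition of $3$-pyramidal, $\Gamma$ fixes $\infty_1,\infty_2,\infty_3$ pointwise, so they are three singleton orbits, not one orbit, and $\Gamma$ does not interchange them. The reason there are exactly three involutions is the direct argument you essentially give elsewhere: for each involution $j$ the unique block through $\{0,j\}$ is $j$-invariant, hence its third point must be some $\infty_i$; conversely each $\infty_i$ determines a unique involution via the block $\{\infty_i,0,j_i\}$. No appeal to interchangeability is needed (and indeed, for the STS case the involutions need not be conjugate --- that is the extra condition that appears only in the KTS analysis).
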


In the following lemma we recall what the 3-pyramidal STS$(6n+3)$ generated by a $(G,\{2^3,3^e\},3,1)$-DF looks like.
\begin{lem}\label{description}
Up to isomorphism, $(V,{\cal B})$ is a STS$(6n+3)$ that is $3$-pyramidal under $G$ if and only if
the following facts hold: 
\begin{itemize}
\item[$(i)$] $V=G \ \cup \ \{\infty_1,\infty_2,\infty_3\}$;
\item[$(ii)$] the action of $G$ on $V$ is the addition on the right with the rule that $\infty_i+g=\infty_i$ for each $g\in G$; 
\item[$(iii)$] $G$ has exactly three involutions, say $j_1$, $j_2$, $j_3$;
\item[$(iv)$] a system of representatives for the $G$-orbits on $\cal B$ is of the form $$\{B_\infty,B_1,B_2,B_3\} \ \cup \ {\cal H} \ \cup \ {\cal F}$$ where:
\begin{description}
\item $B_{\infty}=\{\infty_1,\infty_2,\infty_3\}$; \item $B_i=\{\infty_i,0,j_i\}$ for $i=1,2,3$; \item $\cal H$ is a set of $e$ subgroups of $G$ of order $3$;
\item $\cal F$ is a $(G,\Sigma,3,1)$-DF with $\Sigma=\{\{0,j_1\},\{0,j_2\},\{0,j_3\}\} \cup {\cal H}$.
\end{description}
\end{itemize}
\end{lem}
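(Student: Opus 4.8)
**The plan is to prove both directions of the "if and only if" by a direct translation between the combinatorial data of the KTS and the algebraic data of the difference family.** For the forward direction, I would start with a $3$-pyramidal STS$(6n+3)$ $(V,\mathcal{B})$ admitting a group $G$ of order $6n$ acting sharply transitively on all but three points $\Omega=\{\infty_1,\infty_2,\infty_3\}$. Since $|V|=6n+3$ is odd and $|G|=6n$, the three fixed points cannot lie in a single $G$-orbit on $V$, and one checks that $G$ fixes each $\infty_i$ individually (a nontrivial orbit among the $\infty_i$ would have size dividing $|G|$ and at most $3$, hence size $3$, forcing $3\mid 6n$, which is fine — so here one actually needs the finer fact, cited as Theorem~2.1 of \cite{BRT}, that $G$ has exactly three involutions which are the stabilizer-relevant elements; I would invoke that characterization rather than reprove it). This gives (i), (ii), (iii). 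For (iv), I examine the $G$-orbits on $\mathcal{B}$. The block through $\infty_1,\infty_2$ must be fixed by $G$ (its third point is $G$-fixed, so equals some $\infty_k$, and by counting it is $B_\infty=\Omega$). A block through $\infty_i$ and a point $g\in G$: its $G$-orbit has a representative $\{\infty_i,0,x\}$ with $x\in G$; the stabilizer of $\infty_i$ is all of $G$, so the stabilizer of this block is $\{e\}\cup\{g: \{\infty_i,0,x\}+g=\{\infty_i,0,x\}\}$, which forces $g$ to be an involution swapping $0$ and $x$, so $x=j_i$ for the involution $j_i$ associated to $\infty_i$ — giving the blocks $B_i$. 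The remaining blocks lie entirely in $G$; their $G$-orbits are either short (stabilizer of order $3$, giving a subgroup of order $3$ as a block, collected into $\mathcal{H}$) or regular, and a regular-orbit representative is a triple whose development covers each block exactly once.

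The crux of the forward direction is then the \emph{difference} bookkeeping: each unordered pair $\{P,Q\}\subseteq V$ lies in exactly one block, so each element $d\in G\setminus\{0\}$ arises exactly once as a difference of two points lying in a common block. The pair $\{\infty_i,\infty_j\}$ is covered by $B_\infty$; the pair $\{\infty_i,g\}$ is covered by a translate of $B_i$; the pair $\{g,h\}$ with $g-h$ an involution $j_i$ is covered by a translate of $B_i$ (since $\{0,j_i\}$ is a sub-pair of $B_i$). Every other pair of $G$-points must be covered by a translate of a block in $\mathcal{H}\cup\mathcal{F}$. Translating this into the language of $\Delta$: the multiset $\Delta(\mathcal{H}\cup\mathcal{F})$ must equal exactly $G$ minus the three involutions $j_1,j_2,j_3$ (each covered by the $B_i$), and minus $0$. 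But the list of differences of a subgroup $H$ of order $3$ is precisely $H\setminus\{0\}$ with each element once; so $\bigcup_{H\in\mathcal{H}}\Delta H$ accounts for the non-identity elements of those order-$3$ subgroups. Hence $\Delta\mathcal{F}$ equals the set of elements of $G$ lying in \emph{none} of $\{0,j_1\},\{0,j_2\},\{0,j_3\}$ or any $H\in\mathcal{H}$ — which is exactly the definition of $\mathcal{F}$ being a $(G,\Sigma,3,1)$-DF with $\Sigma=\{\{0,j_1\},\{0,j_2\},\{0,j_3\}\}\cup\mathcal{H}$, provided $\Sigma$ is a partial spread. That $\Sigma$ is a partial spread follows because the three order-$2$ subgroups $\{0,j_i\}$ have pairwise trivial intersection (distinct involutions), each order-$3$ subgroup is disjoint from them (coprime orders), and two distinct order-$3$ subgroups meet trivially — and crucially no element of $G$ lies in two members of $\Sigma$, or it would be a repeated difference.

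For the converse, given $V$, the $G$-action, the involutions, and a system of block representatives as in (iv) with $\mathcal{F}$ a $(G,\Sigma,3,1)$-DF, I would simply verify that $\mathcal{B}=\{B_\infty\}^G\cup\{B_1,B_2,B_3\}^G\cup\mathcal{H}^G\cup\mathcal{F}^G$ is an STS. Every pair is covered: pairs among the $\infty_i$ by $B_\infty$; pairs $\{\infty_i,g\}$ by the unique translate $B_i+h$ with $h$ chosen so that $\{0,j_i\}+h\ni g$ — here one uses that $\{0,j_i\}$ ranges over all its cosets and $j_i$-resolvability-type reasoning is \emph{not} needed, just that $\{0,j_i\}$ has order $2$ and the stabilizer structure makes the map $h\mapsto \{0,j_i\}+h$ a double cover, matched against the two choices of which point of $\{0,j_i\}+h$ plays the role of $0$; pairs $\{g,h\}$ with difference an involution $j_i$ by a translate of $B_i$; and all remaining pairs of $G$-points by exactly one translate of a block in $\mathcal{H}\cup\mathcal{F}$, precisely because $\Delta(\mathcal{H}\cup\mathcal{F})$ hits each such difference once (this is the partial-spread / DF condition read backwards). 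Uniqueness follows from a counting argument: the total number of blocks equals $\binom{6n+3}{2}/3$, so "covers every pair at least once" upgrades to "exactly once." Finally, $G$ acts on $\mathcal{B}$ by construction, fixing each $\infty_i$ and regularly on $G$, so the system is $3$-pyramidal under $G$.

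I expect the main obstacle to be the careful handling of the blocks $B_i=\{\infty_i,0,j_i\}$ and their translates: these have a nontrivial stabilizer (the involution $j_i$, since $B_i+j_i=\{\infty_i,j_i,0\}=B_i$), so their $G$-orbit has length $3n$ rather than $6n$, and one must check that these $3n$ translates, together with $B_\infty$, cover exactly the pairs $\{\infty_i,g\}$ for all $g$ and exactly the pairs $\{g,g+j_i\}$ — a short but delicate double-count that also explains why $\{0,j_i\}$ appears in the spread $\Sigma$ (those differences are "used up" by the $B_i$ and must be excluded from $\Delta\mathcal{F}$). Once this is pinned down, the rest is the standard equivalence between difference families relative to a partial spread and group-divisible / resolvable-type designs, which the paper has already set up in its definitions.
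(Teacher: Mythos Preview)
The paper does not prove this lemma; it is presented as a recollection of the explicit structure of a $3$-pyramidal STS and is attributed (via the preceding theorem) to Theorem~2.1 of \cite{BRT}. So there is no ``paper's own proof'' against which to compare --- the paper simply cites the same reference you invoke.

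Your direct argument is essentially correct and supplies what the paper leaves to the citation. One step, however, needs to be made explicit. In the forward direction you translate a block through $\infty_i$ to the form $\{\infty_i,0,x\}$ and say the stabilizer ``forces $g$ to be an involution swapping $0$ and $x$, so $x=j_i$''. What you have actually shown is that any \emph{nontrivial} stabilizer element must be an involution equal to $x$; you have not argued that the stabilizer is nontrivial, i.e., that $x$ must be an involution at all. The missing step is the short count you allude to only at the very end: there are $6n$ pairs $\{\infty_i,g\}$ with $g\in G$, each block through $\infty_i$ other than $B_\infty$ contains two of them, so there are exactly $3n$ such blocks; since $G$ is transitive on the pairs $(\infty_i,g)$ it is transitive on these $3n$ blocks, giving a single orbit of length $3n=|G|/2$ and hence a stabilizer of order $2$. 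Once this count is placed in the forward direction (rather than deferred to the ``main obstacle'' paragraph), your proof is complete. One further triviality: by the paper's definition of $f$-pyramidal the three points $\infty_i$ are fixed \emph{individually} by $G$, so your opening discussion about possible nontrivial orbits among the $\infty_i$ is unnecessary.
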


\begin{defn} 
Throughout this paper a finite group $G$ will be called ``pertinent'' if it has precisely three involutions, and the three of them are pairwise conjugate in $G$. 
\end{defn}

Obviously, a pertinent group is necessarily non-abelian. Up to isomorphism, the smallest pertinent group is $\D_6$, i.e., the dihedral group of order 6. 
The next one is $\mathbb{A}_4$, the alternating group of degree 4.

As already said, we prefer to write every group in additive notation. So, differently from the mostly used representation, we prefer to see $\D_6$
as the additive group $D$ with underlying set $\Z_2\times\Z_3$ and operation law $ \hat{+}$ defined by
$(a,b) \  \hat{+} \ (c,d)=(a+c,(-1)^{c}b+d)$. Adopting this representation, it is easy to see that the difference $ \hat{-}$ in $D$ works 
as follows: $$(a,b) \  \hat{-} \ (c,d)=(a-c,(-1)^{c}(b-d)).$$ 
The three involutions of $D$ are $(1,0)$, $(1,1)$ and $(1,2)$. 
The fact that $$(1,1) \ \hat{+} \ (1,0) \ \hat{-} \ (1,1)=(1,2)\quad\quad{\rm and}\quad\quad (1,2) \ \hat{+} \ (1,0) \ \hat{-} \ (1,2)=(1,1)$$
confirms that $D$ is pertinent.

The alternating group $\mathbb{A}_4$ will be also represented additively as the first term of an infinite series of
pertinent additive groups that we will construct in the proof of the ``if part" of Theorem \ref{PertinentOrders}.

The crucial ingredient to characterize and construct the $3$-pyramidal KTSs are
some special $(G,\{2^3,3\},3,1)$-DFs with $G$ pertinent defined as follows.

\begin{defn}\label{JRDF}
Let $\cal F$ be a $(G,\{2^3,3\},3,1)$-DF with $G$ pertinent and let $J$ be a subgroup of $G$ of order $2$. 
We say that $\cal F$ is {\it $J$-resolvable} if there exists $a, b\in G$ such that
\begin{itemize}
\item $J$, $a+J-a$ and $b+J-b$ are the three subgroups of order 2 of $G$;
\item $\Phi(\FF)\ \cup \ \{0, a, b\}$ is a complete system of
representatives for the left cosets of $J$ in $G$.
\end{itemize}
\end{defn}

The following fact is straightforward. 

\begin{prop}\label{matrioska2}
Let $J$ be a subgroup of order $2$ of a pertinent group $G$ and let $H$ be a pertintent subgroup of $G$ containing $J$.
If $\FF_0$ is a $J$-resolvable $(H,\{2^3,3\},3,1)$-DF and $\cal F$ is a $J$-resolvable $(G,H,3,1)$-DF,
then ${\cal F} \ \cup \ {\cal F}_0$ is a $J$-resolvable $(G,\{2^3,3\},3,1)$-DF.
\end{prop}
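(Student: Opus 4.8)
The statement is a pure definition-chase: one unwinds the two different notions of $J$-resolvability (Definition~\ref{rdf} for $\FF$, Definition~\ref{JRDF} for $\FF_0$ and for the conclusion) and glues the pieces. The plan is (1) to observe that the pertinence of $H$ forces $H$ and $G$ to share the same three involutions, so that nothing is lost when passing from $H$ to $G$; (2) to obtain the difference-family property of $\FF\cup\FF_0$ by simply adding the lists of differences; and (3) to obtain $J$-resolvability by partitioning the cosets of $J$ in $G$ into those inside $H$ and those outside $H$, handled respectively by $\FF_0$ and by $\FF$.

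\textbf{Step 1 (same $2$-structure).} Since $H$ is pertinent it has exactly three involutions, and these are \emph{a fortiori} involutions of $G$; as $G$ itself has exactly three involutions, the involutions of $H$ are precisely those of $G$, and hence the three subgroups of order $2$ of $H$ coincide with the three subgroups of order $2$ of $G$. In particular, if $a,b\in H$ are the elements witnessing the $J$-resolvability of $\FF_0$, so that $J$, $a+J-a$, $b+J-b$ are the three subgroups of order $2$ of $H$, then the same $a,b\in H\le G$ satisfy the first condition of Definition~\ref{JRDF} relative to $G$.

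\textbf{Step 2 ($\FF\cup\FF_0$ is a $(G,\{2^3,3\},3,1)$-DF).} Let $\Sigma_0=\{J,\,a+J-a,\,b+J-b,\,K\}$ be the type-$\{2^3,3\}$ partial spread of $H$ with respect to which $\Delta\FF_0=H\setminus\bigcup\Sigma_0$. Its members are subgroups of $H\le G$ with pairwise trivial intersections, so $\Sigma_0$ is also a partial spread of $G$ (still of type $\{2^3,3\}$) and $\bigcup\Sigma_0\subseteq H$. Since $\FF$ is a $(G,H,3,1)$-DF we have $\Delta\FF=G\setminus H$, hence
$$\Delta(\FF\cup\FF_0)=\Delta\FF\ \cup\ \Delta\FF_0=(G\setminus H)\ \cup\ \bigl(H\setminus\textstyle\bigcup\Sigma_0\bigr)=G\setminus\textstyle\bigcup\Sigma_0,$$
which is exactly the defining property of a $(G,\{2^3,3\},3,1)$-DF relative to $\Sigma_0$.

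\textbf{Step 3 ($J$-resolvability) and where the (tiny) care lies.} By Definition~\ref{JRDF} it remains to check that $\Phi(\FF\cup\FF_0)\cup\{0,a,b\}=\Phi(\FF)\cup\bigl(\Phi(\FF_0)\cup\{0,a,b\}\bigr)$ is a complete system of representatives for the left cosets of $J$ in $G$. Split these cosets into those contained in $H$ — which are exactly the left cosets of $J$ in $H$, since $g+J\subseteq H\iff g\in H$ — and those not contained in $H$. The $J$-resolvability of $\FF_0$ (Definition~\ref{JRDF}) says that $\Phi(\FF_0)\cup\{0,a,b\}$ is a complete system of representatives for the former; by a size count ($|\Phi(\FF_0)\cup\{0,a,b\}|=(|H|-6)/2+3=|H|/2=[H:J]$) this is a genuine set, and it is contained in $H$. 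The $J$-resolvability of $\FF$ (Definition~\ref{rdf}) says $\Phi(\FF)+J=G\setminus H$, i.e. $\Phi(\FF)$ is a complete system of representatives for the latter; in particular $\Phi(\FF)\subseteq G\setminus H$, so it is disjoint from $\Phi(\FF_0)\cup\{0,a,b\}\subseteq H$. Taking the union therefore yields a complete system of representatives for all left cosets of $J$ in $G$, which finishes the proof. There is no genuine obstacle here; the only two points that deserve a moment's attention are Step~1 (the hypothesis ``$H$ pertinent'' is what guarantees that all three $2$-subgroups of $G$ already live in $H$, so that $\Sigma_0$ is still a $\{2^3,3\}$-spread in $G$) and the disjointness $\Phi(\FF)\cap H=\varnothing$ used above, which is not postulated but is an automatic consequence of the $J$-resolvability of $\FF$.
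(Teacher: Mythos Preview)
Your proof is correct and complete. The paper itself provides no proof of this proposition, declaring it ``straightforward''; your argument is precisely the routine definition-chase that justifies this assessment, with appropriate care taken at the two points you flag (that pertinence of $H$ forces the three involutions of $G$ and $H$ to coincide, and that $J$-resolvability of $\FF$ in the sense of Definition~\ref{rdf} automatically yields $\Phi(\FF)\subseteq G\setminus H$).
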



Speaking of a $(G,\{2^3,3\},3,1)$-RDF with $G$ pertinent, we will mean a $J$-resolvable  $(G,\{2^3,3\},3,1)$-DF
with $J$ one of the three subgroups of $G$ of order 2. 

The following result gives a characterization of the $3$-pyramidal KTSs and, more importantly, a way to construct them.

\begin{thm}\label{DF}
There exists a $3$-pyramidal KTS$(6n+3)$ with $n > 0$, if and only if there exists a
$(G,\{2^3,3\},3,1)$-RDF for a suitable pertinent group $G$
of order $6n$.
\end{thm}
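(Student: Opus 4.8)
The plan is to reduce the statement to Lemma \ref{description}, which already describes the structure of any $3$-pyramidal STS$(6n+3)$ via a $(G,\{2^3,3^e\},3,1)$-DF, and then to understand exactly what extra data a resolution adds. First I would recall that a $3$-pyramidal KTS$(6n+3)$ is, in particular, a $3$-pyramidal STS$(6n+3)$, so Lemma \ref{description} applies: the point set is $G\cup\{\infty_1,\infty_2,\infty_3\}$, $G$ has three involutions $j_1,j_2,j_3$, and a transversal for the block orbits is $\{B_\infty,B_1,B_2,B_3\}\cup\mathcal H\cup\mathcal F$ with $B_\infty=\{\infty_1,\infty_2,\infty_3\}$, $B_i=\{\infty_i,0,j_i\}$, $\mathcal H$ a set of $e$ subgroups of order $3$, and $\mathcal F$ a $(G,\Sigma,3,1)$-DF for $\Sigma=\{\{0,j_1\},\{0,j_2\},\{0,j_3\}\}\cup\mathcal H$. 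The first real step is to show that $e=1$; I expect this to follow from a counting argument applied to the resolution. A parallel class is a partition of the $6n+3$ points into $2n+1$ triples; since $6n+3\equiv 3\pmod 6$, every parallel class must contain exactly one block meeting $\{\infty_1,\infty_2,\infty_3\}$ in all three points or... more carefully, $B_\infty$ is the unique block through the $\infty_i$'s (as the STS has a single block on those three points), so in the parallel class containing $B_\infty$ the remaining $2n$ blocks partition $G$, forcing $|G|=6n$; and for every other parallel class the three points $\infty_i$ lie in three distinct blocks of the form $\{\infty_i\}\cup(\text{pair from }G)$. Tracking how $G$ permutes the parallel classes (using the result announced in the introduction that $G$ fixes exactly one parallel class and is transitive on the rest) pins down $e=1$, i.e. $\mathcal H=\{K\}$ for a single subgroup $K$ of order $3$; this also shows $\Sigma$ has type $\{2^3,3\}$.

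Next I would identify the fixed parallel class and extract the resolvability condition. The parallel class $P_\infty$ fixed by $G$ must contain $B_\infty$; its other blocks form a $G$-invariant partition of $G$ into triples, which (being $G$-invariant and containing $0$ in some block) must be the set of right cosets of a subgroup of order $3$ — necessarily $K$ — together with $B_\infty$. So $P_\infty=\{B_\infty\}\cup\{K+g : g\in G\}$, consistent with $\mathcal H=\{K\}$ being developed into a full parallel class. The remaining parallel classes form a single $G$-orbit; pick one, $P$, and let $J=\mathrm{Stab}_G(P)$. Since $[G:J]$ equals the number of parallel classes in the orbit and the total block count forces this to be... a short computation with $b=(6n+3)(3n+1)/3$ blocks, $2n+1$ blocks per class, hence $(3n+1)(6n+3)/(3(2n+1))=3n+1$... wait, that counts *all* classes; removing $P_\infty$ leaves $3n$ classes in the orbit, so $|J|=6n/3n=2$, i.e. $J$ is one of the three involutions' subgroups. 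Now examine $P$ itself: it contains $B_i=\{\infty_i,0,j_i\}$ for exactly those $i$ with $j_i\in J$... actually since $J$ is fixed by $J$, and $B_i$ is fixed by the involution $j_i$, the class $P$ (being $J$-fixed) contains whichever $B_i$ has $j_i$ generating $J$; the other two $B_i$'s lie in $J$-translates of $P$. The blocks of $P$ other than this $B_i$ partition $(G\setminus\{0,j_i\})\cup\{\infty_k,\infty_\ell\}$ into triples; two of them are $\{\infty_k,x,y\}$ and $\{\infty_\ell,x',y'\}$, and the rest are the translates (by a transversal of $J$) of the base blocks in $\mathcal F$. Unwinding which cosets of $J$ get covered yields precisely: $\Phi(\mathcal F)\cup\{0,a,b\}$ is a complete set of coset representatives for $J$ in $G$, where $a,b$ are chosen so that $a+J-a$ and $b+J-b$ are the other two involution-subgroups — which is exactly Definition \ref{JRDF}. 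Hence $\mathcal F$ is a $(G,\{2^3,3\},3,1)$-RDF.

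For the converse, given a $(G,\{2^3,3\},3,1)$-RDF $\mathcal F$ over a pertinent group $G$ of order $6n$, I would run the construction of Lemma \ref{description} in reverse: take $V=G\cup\{\infty_1,\infty_2,\infty_3\}$, let $K$ be the (unique, by the RDF type $\{2^3,3\}$ — here one must check $\mathcal H=\{K\}$ is forced, or simply take $\mathcal H$ to be the order-$3$ part of $\Sigma$) subgroup of order $3$ in the spread, and form the block orbits of $\{B_\infty,B_1,B_2,B_3\}\cup\{K\}\cup\mathcal F$; that this is an STS$(6n+3)$ is Lemma \ref{description}. It remains to build the resolution. Set $P_\infty=\{B_\infty\}\cup\{K+g:g\in G\}$ — a genuine parallel class since $K$ has order $3$ and $B_\infty$ mops up the three infinite points. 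For the other classes, use the RDF property: write $j_1$ for the generator of $J$ and let $a,b$ be as in Definition \ref{JRDF}, so $j_2:=a+j_1-a$, wait, rather the three involutions are the nonzero elements of $J$, $a+J-a$, $b+J-b$; relabel so $B_1=\{\infty_1,0,j_1\}$ with $\langle j_1\rangle=J$. Then define a candidate parallel class $P$ consisting of $B_1$, two blocks $\{\infty_2,a,*\}$, $\{\infty_3,b,*\}$ (the exact second coordinates forced by the translation structure), and $\{B+t : B\in\mathcal F,\ t \text{ ranging over a transversal of }J\text{ intersected appropriately}\}$; the condition $\Phi(\mathcal F)\cup\{0,a,b\}\equiv G/J$ is exactly what guarantees these triples partition $V$. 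Developing $P$ under $G$ gives an orbit of parallel classes of length $[G:J]=3n$, and together with $P_\infty$ this accounts for all $3n+1$ classes and every block exactly once — the latter check being the bookkeeping that $\mathcal F\cup\{B_1,B_2,B_3\}$-orbits reappear with the right multiplicities, which is where I would be most careful.

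\medskip
The main obstacle I anticipate is the bookkeeping in both directions that converts the ``resolvability'' condition $\Phi(\mathcal F)\cup\{0,a,b\}=$ (transversal of $J$) into the literal statement that a specific list of triples partitions $V$, while simultaneously ensuring that translating one parallel class by $G$ reproduces the entire block set with no block repeated and none omitted. In the forward direction the subtlety is proving $e=1$ and correctly locating which $B_i$ and which infinite points land in the $G$-fixed class versus the orbit; in the backward direction it is checking that the blocks $\{\infty_2,a,\cdot\}$ and $\{\infty_3,b,\cdot\}$ can be assigned consistently so that $P$ is simultaneously a partition of $V$ and the $G$-orbit of $P$ exhausts the block orbits of $B_1,B_2,B_3$ and $\mathcal F$. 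Both hinge on the identity relating $|G\setminus\{2^3,3\}|/6$ (the size of $\mathcal F$), the flatten size, and $[G:J]$, together with the fact — assumed from the introduction — that a $3$-pyramidal group fixes exactly one parallel class and acts transitively on the others, which is what makes the single orbit $\{P+g\}$ the right object to build.
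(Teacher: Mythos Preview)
Your overall outline matches the paper's: start from Lemma~\ref{description}, analyze the parallel class through $B_\infty$ and another parallel class (say the one through $B_1$), compute its stabilizer, and read off the resolvability condition. But two essential steps that you treat as known inputs are in fact \emph{proved inside this theorem}, and assuming them makes your argument circular.

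First, the statement that ``a $3$-pyramidal group fixes exactly one parallel class and acts transitively on the others'' is not a prior result you may invoke; it is one of the outputs of this proof. The paper obtains it by a direct count: the class $\mathcal P$ containing $B_\infty$ is fixed by $G$ (since $B_\infty$ is); for the class $\mathcal Q$ containing $B_1=\{\infty_1,0,j_1\}$, one shows $\mathrm{Stab}_G(\mathcal Q)=\mathrm{Stab}_G(B_1)=\{0,j_1\}=:J$, so $|\mathrm{Orb}(\mathcal Q)|=|G|/2=3n$, and since a KTS$(6n+3)$ has exactly $3n+1$ classes, $\mathcal R=\{\mathcal P\}\cup\mathrm{Orb}(\mathcal Q)$. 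This count is also what forces $e=1$: the base blocks are then exactly $\{B_\infty,B_1,B_2,B_3,H\}\cup\mathcal F$ with $H$ the single order-$3$ subgroup (the block of $\mathcal P$ through $0$), so comparison with Lemma~\ref{description}(iv) gives $\mathcal H=\{H\}$. Your ``counting argument'' paragraph gropes toward this but never closes it, precisely because you outsourced the orbit structure.

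Second, you never derive that $G$ is \emph{pertinent}. Lemma~\ref{description} only gives you three involutions; conjugacy is the new content coming from the resolution. The paper gets it by looking, inside $\mathcal Q$, at the block through $\infty_i$ for $i=2,3$: since $\mathrm{Stab}_G(\mathcal Q)=J$, this block has the form $\{\infty_i,g_i,g_i+j_1\}$, and as it lies in the $G$-orbit of $B_i=\{\infty_i,0,j_i\}$ one obtains $j_i=g_i+j_1-g_i$. You instead write ``where $a,b$ are chosen so that $a+J-a$ and $b+J-b$ are the other two involution-subgroups'' without saying why such $a,b$ exist; that existence is exactly the pertinence of $G$, and it must be \emph{proved}. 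Once you have these $g_2,g_3$, the partition property of $\mathcal Q$ gives $(\Phi(\mathcal F)\cup\{0,g_2,g_3\})+J=G$, which is the RDF condition. Your converse is fine and matches the paper's construction of $\mathcal P$ and $\mathcal Q$.
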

\begin{proof} 
($\Longrightarrow$). \quad
Let $(V,{\cal B},{\cal R})$ be a KTS$(6n+3)$ that is $3$-pyramidal under $G$.
Thus $(V,{\cal B})$ is a STS$(6n+3)$ that is $3$-pyramidal under $G$, hence
we can assume that $V$ and $\cal B$ satisfy the conditions listed in Lemma \ref{description}.

Let $\cal P$ be the parallel class containing the block $B_{\infty}$. 
Obviously, we have ${\cal P}+g={\cal P}$ for each $g\in G$. 
It follows that the distinct right translates of the block $H$ 
of $\cal P$ through 0 form a partition of $G$. This 
clearly implies that $H$ is a subgroup of $G$ and
that the blocks of $\cal P$ are $B_\infty$ and all
the right cosets of $H$ in $G$.

Now let $\cal Q$ be the parallel class of $\cal R$ containing the block $B_1=\{\infty_1,0,j_1\}$.
It is not difficult to see that the $G$-stabilizer of ${\cal Q}$ coincides with the $G$-stabilizer of $B_1$ that is $J:=\{0,j_1\}$.
Thus, for $i=2,3$, the block of ${\cal Q}$ through $\infty_i$ is of the form $\{\infty_i,g_i,g_i+j_1\}$ 
for a suitable group element $g_i$. This block necessarily belongs to the orbit of $B_i$,
hence we have $\{\infty_i,0,j_i\}+t_i=\{\infty_i,g_i,g_i+j_1\}$ for a suitable $t_i$.
This equality easily implies that $j_i=g_i+j_1-g_i$, hence the three involutions $j_1$, $j_2$, $j_3$ 
are pairwise conjugate, i.e., $G$ is pertinent.

The fact that the $G$-stabilizer of $\cal Q$ is $J$ also implies that the $2n-2$ triples of ${\cal Q}$ not containing the ``points at infinity" 
can be grouped into pairs $\{A_i,A_i+j_1\}$, $1\leq i\leq n-1$, and that the $G$-orbit of $\cal Q$ has length ${|G|\over2}=3n$.
Then, given that the resolution of a KTS$(6n+3)$ has size $3n+1$, we deduce that ${\cal R}=\{{\cal P}\} \ \cup \ Orb({\cal Q})$.
Also, if we set ${\cal F}=\{A_i \ | \ 1\leq i\leq n-1\}$, we can claim that
a set of {\it base blocks} for ${\cal B}$ is given by 
$$\{B_\infty, \ B_1, \ B_2, \ B_3, \ H\} \ \cup \ {\cal F}.$$
It follows, by condition $(iv)$ in Lemma \ref{description}, that $\cal F$ is a $(G,\{2^3,3\},3,1)$-DF.

Given that the blocks of $\cal Q$ partition $V$, we have $(\Phi({\cal F}) \ \cup \ \{0,g_2,g_3\})+J=G$. 
This means that $\Phi({\cal F}) \ \cup \ \{0,g_2,g_3\}$ is a complete system
of representatives for the left cosets of $J$ in $G$, i.e., $\cal F$ is $J$-resolvable.

($\Longleftarrow$).\quad Let $G$ be a pertinent group of order $6n$ whose involutions are $j_1$, $j_2$, $j_3$,
and assume that $\cal F$ is a $\{0,j_1\}$-resolvable $(G,\{2^3,3\},3,1)$-DF. 
For $i=2,3$, there are suitable group elements $g_2$, $g_3$ such that $j_i=g_i+j_1-g_i$  and $(\Phi({\cal F}) \ \cup \ \{0,g_2,g_3\})+\{0,j_1\}=G$. 
Let $H$ be the subgroup of $G$ of order 3 belonging to the partial spread associated with $\cal F$. 
Two parallel classes of the STS$(6n+3)$ generated by $\cal F$ are clearly the following:
$${\cal P}=\{B_\infty\} \ \cup \ \{\mbox{rights cosets of $H$ in }G\};$$
$${\cal Q}=\bigl{\{}\{\infty_i,g_i,g_i+j_1\} \ | \ i=1,2,3\bigl{\}} \ \cup \ \bigl{\{}A, A+j_1 \ | \ A\in{\cal F}\bigl{\}}.$$
Their $G$-stabilizers are, respectively, $G$ and $\{0,j_1\}$ so that their $G$-orbits have size 1 and $3n$.
It easily follows that $\{{\cal P}\} \ \cup \ Orb({\cal Q})$ is a $G$-invariant resolution of the STS$(6n+3)$
generated by $\cal F$, namely a 3-pyramidal KTS$(6n+3)$.
\end{proof}

In view of the above theorem, it is important to determine the set of {\it pertinent numbers}, i.e., the set 
of orders of the pertinent groups. 

\begin{thm}\label{PertinentOrders}
There exists a pertinent group of order $n$ if and only if $n\equiv6$ $($mod $12)$ or $n=4^\alpha m$ with
$\alpha>0$ and $m\equiv3$ $($mod $6)$.
\end{thm}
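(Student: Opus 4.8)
The plan is to split the statement into its ``if'' and ``only if'' halves, treating the divisibility structure of a pertinent group's order through the interplay between its Sylow 2-subgroup and the three mutually conjugate involutions.

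For the ``only if'' direction, let $G$ be pertinent with $|G|=n$, and let $P$ be a Sylow 2-subgroup. The three involutions of $G$ are conjugate, hence all lie in conjugates of $P$; in particular $P$ contains at least one involution, and since the number of involutions of $P$ is odd (a standard counting via pairing up non-involutory 2-elements with their inverses), $P$ has either one or three involutions. If $P$ has three involutions, they are permuted by $N_G(P)$ acting by conjugation, and a counting of involutions using Sylow's theorem (the number of Sylow 2-subgroups is odd, and the involutions are distributed among them) should force $P$ to contain all three and $N_G(P)$ to act transitively on them, so $3 \mid |N_G(P)/C_G(P)|$, hence $3\mid |G|$; combined with the fact that $G$ has even order this gives $6\mid n$. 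If $P$ has a unique involution $j$, then $j$ is central in $P$ and hence in $N_G(P)$; but then the conjugates of $j$ are as numerous as the Sylow 2-subgroups, which is odd, contradicting that $j$ has exactly $3$ conjugates (three being odd is fine, so this case is not immediately excluded) — so one must instead count more carefully: the number of Sylow 2-subgroups equals the number of conjugates of $j$ when $P$ has a unique involution, forcing exactly $3$ Sylow 2-subgroups, hence $|G:N_G(P)|=3$ and again $3\mid n$. To pin down the $2$-part, one analyzes the action of $G$ on the three involutions: the kernel $K$ of this action has index $3$ or $6$ in $G$ (image in $S_3$), and $K$ centralizes each involution. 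The structure of $K$, together with transfer or Burnside-type arguments applied to the central involution inside a Sylow $2$-subgroup of $K$, should force the $2$-part of $n$ to be either $2$ (giving $n\equiv 6 \pmod{12}$ once one also tracks the $3$-part is $\equiv 3 \pmod 6$... ) or a power of $4$ times the appropriate odd/singly-even factor, yielding $n=4^\alpha m$ with $m\equiv 3\pmod 6$. This $2$-local transfer argument is the main obstacle, and is presumably why the authors devote all of Section~4 to it.

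For the ``if'' direction one exhibits explicit pertinent groups. For $n\equiv 6\pmod{12}$, write $n=6k$ with $k$ odd; the generalized dihedral group $\Z_2 \ltimes \Z_{3k}$ (with $\Z_2$ acting by inversion), equivalently a suitable extension generalizing the group $D$ of order $6$ described in the excerpt, has exactly the involutions coming from the $\Z_2$-part, and when $3\mid 3k$ with the odd-order complement these involutions are all conjugate (they form a single orbit under translation by the odd-order subgroup of index $2$); checking pertinence reduces to the same conjugacy computation already done for $D$. For $n=4^\alpha m$ with $m\equiv 3\pmod 6$, one builds an infinite family starting from $\A_4$: the excerpt explicitly announces that $\A_4$ will be ``the first term of an infinite series of pertinent additive groups'' constructed in this proof, so the construction is an iterated extension — presumably $\A_4$ handles $n=12$ ($\alpha=1$, $m=3$), a wreath-type or semidirect construction $\Z_2^2 \rtimes (\text{something})$ bumps $\alpha$, and tensoring with an odd cyclic group handles general $m\equiv 3 \pmod 6$. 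In each case the verification is: count involutions (exactly three), and exhibit conjugating elements realizing the $S_3$ or $\Z_3$ permutation of them, mirroring the displayed check $(1,1)\,\hat{+}\,(1,0)\,\hat{-}\,(1,1)=(1,2)$ for $D$.

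I expect the ``if'' direction to be a matter of writing down the right groups and doing bounded, routine conjugacy checks, while the ``only if'' direction — specifically the transfer/fusion argument forcing the $2$-part of $|G|$ into the dichotomy ``$\|G\|_2 = 2$'' versus ``$\|G\|_2 = 4^\alpha$'' — is the genuinely hard step, since one must rule out, e.g., $8\mid n$ with a singly-even complement, which requires controlling the fusion of the three involutions inside a Sylow $2$-subgroup rather than just counting them.
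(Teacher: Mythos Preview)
Your ``if'' direction contains a concrete error. For $n\equiv 6\pmod{12}$ with $n=6k$, $k$ odd, the generalized dihedral group $\Z_2\ltimes\Z_{3k}$ (inversion action) has $3k$ involutions, not three: every element $(1,x)$ squares to the identity. So this group is not pertinent once $k>1$. The paper instead takes the \emph{direct} product $D\times H$ with $H$ an arbitrary group of odd order $k$; since $H$ has no involutions, the only involutions of $D\times H$ are the three elements $(j,0)$ with $j$ an involution of $D$, and their conjugacy is inherited from $D$. For $n=4^\alpha m$ the paper gives an explicit semidirect product $G_\alpha=\Z_3\ltimes\Z_{2^\alpha}^2$ via the order-$3$ automorphism $(x,y)\mapsto(-y,x-y)$ of $\Z_{2^\alpha}^2$; one then checks that the only involutions are the three nonzero elements of the unique Klein subgroup $2^{\alpha-1}\Z_{2^\alpha}\times 2^{\alpha-1}\Z_{2^\alpha}$, which are cycled by $(1,0,0)$. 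Your ``wreath-type or semidirect'' gesture is in the right spirit but omits exactly the verification that matters, namely that no extra involutions appear.

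For the ``only if'' direction you correctly set up the conjugation action on the three involutions with kernel $C$ and $G/C\cong A_3$ or $S_3$, but the hard step --- ruling out $|G|_2=2^{2k+1}$ for $k\geq1$ --- is only named, and the paper's method is not transfer. The key observation (Lemma~\ref{even}) is that any \emph{normal} $2$-subgroup $H$ of a pertinent group has $4$-power order: an element of $3$-power order cycling the involutions acts on $H$ with no nontrivial fixed points (a fixed element would power to a fixed involution), so $|H|\equiv1\pmod 3$. Applying this to a Sylow $2$-subgroup $Q$ of $C$, which is normal in the pertinent group $N_G(Q)$ (pertinence via Frattini's argument), gives the even/odd dichotomy for the $2$-exponent in Lemma~\ref{klein}. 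The contradiction for odd exponent $\geq3$ is then obtained by a minimal-counterexample reduction (Lemma~\ref{tech} and Theorem~\ref{nodd}): one first cuts down to $|G|=2^n\cdot 3^s$, then quotients by a normal Sylow $3$-subgroup of $C$ to reach $|G|=2^n\cdot 3$, and finally a direct Sylow count in that case forces the Sylow $2$-subgroup to be normal, hence of $4$-power order by Lemma~\ref{even} again. No Burnside transfer is used.
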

\begin{proof}
The proof of the ``only if part" is purely group theoretical and for convenience it is postponed to Section \ref{pertinent}.
Here we prove the ``if part". 

If $n\equiv6$ (mod 12), we have $n=6m$ for a suitable odd integer $m$. Then, recalling that $D$ is pertinent,
it is clear that $D\times H$ is a pertinent group of order $6m$ for every group $H$ of order $m$.

\smallskip
Now let $n=4^\alpha m$ with $\alpha>0$ and $m\equiv3$ $($mod $6)$. 
Consider the matrix  $\Theta=\begin{pmatrix}1& \ \ 0& \ \ 0\cr0& \ \ 0& \ \ 1\cr0&-1&-1\end{pmatrix}$ 
and let $G_\alpha$ be the group with underlying set $\Z_3\times\Z_{2^\alpha}\times\Z_{2^\alpha}$ and operation $\hat{+}$
defined by the rule $$(a,b,c) \ \hat{+} \ (d,e,f)=(a,b,c)\cdot \Theta^d+(d,e,f).$$

This is, up to isomorphism, the outer semidirect product of $\Z_{2^\alpha}^2$ and $\Z_3$ with respect to the group omomorphism
$\theta: \Z_3 \longrightarrow Aut(\Z_{2^\alpha}^2)$ defined by the rule
$$\theta(1)(x,y)=(-y,x-y)\quad\forall (x,y)\in \Z_{2^\alpha}^2.$$ 
The difference of two triples $(a,b,c)$ and $(d,e,f)$ of the group has 
a convenient form; it is the usual difference in the
abelian group $\Z_3\times\Z_{2^\alpha}^2$ multiplied by the inverse of $\Theta^d$:
\begin{equation}\label{ominus}
(a,b,c)\ \hat{-}  \ (d,e,f)=(a-d,b-e,c-f)\cdot \Theta^{-d}.
\end{equation}
More explicitly, we have 
$$(a,b,c) \ \hat{-} \ (d,e,f)=\begin{cases}(a-d, \ b-e, \ c-f) & {\rm if} \ d=0 \cr (a-d, \ e-b+c-f, \ e-b) & {\rm if} \ d=1\cr (a-d, \ f-c, \ b-e+f-c) & {\rm if} \ d=2\end{cases}$$
It is a simple exercise to check that $G_\alpha$ has exactly three involutions that are 
$$(0,2^{\alpha-1},0),\quad\quad (0,0,2^{\alpha-1}), \quad\quad (0,2^{\alpha-1},2^{\alpha-1})$$
and that they are pairwise conjugate. Indeed we have:
$$(1,0,0) \ \hat{+} \ (0,2^{\alpha-1},0) \ \hat{-} \ (1,0,0) \ = \ (0,2^{\alpha-1},2^{\alpha-1});$$
$$(2,0,0) \ \hat{+} \ (0,0,2^{\alpha-1}) \ \hat{-} \ (2,0,0) \ = \ (0,2^{\alpha-1},2^{\alpha-1})$$

Thus $G_\alpha$ is a pertinent group of order $4^\alpha3$. Then, if $H$ is any group of odd order $m$, it is clear
that the direct product $G_\alpha\times H$ is a pertinent group of order $3\cdot4^\alpha m$ whose involutions are $(0,2^{\alpha-1},0,0)$, $(0,0,2^{\alpha-1},0)$ 
and $(0,2^{\alpha-1},2^{\alpha-1},0)$. 
\end{proof}

The alternating group $\mathbb{A}_4$ can be seen, up to isomorphism, as the group $G_1$.

 \section{Pertinent groups}\label{pertinent}

Here we prove the ``only if" part of Theorem \ref{PertinentOrders}.
Considering that the arguments used are purely group theoretical,
the reading of this section can be postponed to a later time without 
compromising the understanding of the rest of the article.
Also, we point out that in this section, unlike the rest of the paper, 
we prefer to denote groups in multiplicative notation.

In the following, let $G$ be a pertinent group and denote by $K$ the subgroup of $G$ generated by the three involutions $i,j,k$. Let $C_G(K)$ be the centralizer of $K$ in $G$, that is
\[
C_G(K)=\{g \in G\ :\ g^{-1}kg=k\ \text{for every $k\in K$}\},
\]
and set $C=C_G(K)$.
Clearly, $K$ is a characteristic subgroup of $G$, so both $K$ and $C$ are normal in $G$. Since $G$ acts on $\{i,j,k\}$ by conjugation with kernel $C$, the quotient $G/C$ is isomorphic to a transitive subgroup of $S_3$, so either $G/C \cong S_3$ or $G/C \cong A_3$ (here $A_3$ denotes the alternating group of degree $3$). Observe in particular that $G$ has an element of order a power of $3$ acting ``cyclically'' on the involutions (meaning that it sends $i$ to $j$, $j$ to $k$ and $k$ to $i$).
Recall that if $H$ is a subgroup of $G$ the ``normalizer'' of $H$ in $G$ is $N_G(H)=\{g \in G\ :\ H^g=H\}$, where $H^g=g^{-1}Hg$ is a ``conjugate'' of $H$ in $G$.

We will make use of the following well-known result in group theory.

\begin{lem}[Frattini's Argument]\label{Frattini}
  If $X$ is a normal subgroup of $G$ and $Q$ is a
  Sylow $p$-subgroup of $X$, then $G=X N_G(Q)$.
\end{lem}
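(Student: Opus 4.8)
The plan is to derive Frattini's Argument directly from Sylow's conjugacy theorem, using the normality of $X$ as the crucial hinge that lets the needed conjugation be performed \emph{inside} $X$ rather than merely inside $G$.

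First I would fix an arbitrary $g \in G$ and look at the conjugate $Q^g = g^{-1}Qg$. Because $X$ is normal in $G$ we have $X^g = X$, hence $Q^g \leq X$; and since $|Q^g| = |Q|$, the subgroup $Q^g$ is again a Sylow $p$-subgroup of $X$. Next I would apply the second Sylow theorem to the group $X$: any two Sylow $p$-subgroups of $X$ are conjugate by an element of $X$, so there is some $x \in X$ with $Q^g = Q^x$. A one-line manipulation then shows $gx^{-1} \in N_G(Q)$: conjugating $Q$ by $gx^{-1}$ gives $x(g^{-1}Qg)x^{-1} = x(x^{-1}Qx)x^{-1} = Q$.

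Finally, writing $g = (gx^{-1})\cdot x$ displays $g$ as a product of an element of $N_G(Q)$ and an element of $X$, so $G = N_G(Q)X$; and since $X \trianglelefteq G$, the set $N_G(Q)X$ coincides with $X\,N_G(Q)$, which yields $G = X\,N_G(Q)$ as claimed. There is no genuine obstacle in this argument — the only external input is Sylow's conjugacy theorem, and the sole point worth flagging is that the hypothesis ``$X$ normal'' is used precisely to guarantee that all the conjugates of $Q$ under elements of $G$ remain Sylow subgroups of $X$, so that Sylow conjugacy can be invoked within $X$.
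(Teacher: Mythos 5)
Your proof is correct and is the standard argument for Frattini's Lemma: normality of $X$ ensures $Q^g$ is again a Sylow $p$-subgroup of $X$, Sylow conjugacy inside $X$ produces $x\in X$ with $Q^g=Q^x$, and then $gx^{-1}\in N_G(Q)$ gives the factorization $G=XN_G(Q)$. The paper itself states this lemma as a well-known result without proof, so there is nothing to compare against; your write-up fills that gap correctly.
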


We start providing sufficient conditions for a pertinent group to have subgroups or quotients that are pertinent.

\begin{lem} \label{sub}
Let $H$ be a subgroup of $G$. Then
\begin{enumerate}
\item If $H$ has even order and $HC=G$, then $H$ is pertinent.
\item If $Q$ is a Sylow $p$-subgroup of $C$ and $H=N_G(Q)$, then $H$ is pertinent.
\item If $H$ has even order and contains a Sylow $3$-subgroup of $G$, then $H$ is pertinent.
\item Suppose the involutions of $G$ commute pairwise. If $H$ is normal in $G$ and $|H|$ is odd then $G/H$ is pertinent.
\end{enumerate}
\end{lem}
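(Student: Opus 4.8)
\textbf{Proof proposal for Lemma \ref{sub}.}

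The plan is to treat the four statements in order, each time using the structure provided by the normal subgroups $K = \langle i,j,k\rangle$ and $C = C_G(K)$, together with the fact (established just above the lemma) that $G/C$ is a transitive subgroup of $S_3$, hence contains a $3$-element cycling the involutions. The recurring strategy is: a subgroup (or quotient) $H$ is pertinent precisely when it contains all three involutions of $G$ \emph{and} conjugation within $H$ already fuses them, so I will verify these two things in each case.

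For part (1), I would argue that if $HC = G$ then conjugation by elements of $H$ induces the full action of $G/C$ on $\{i,j,k\}$; in particular some element of $H$ cycles the three involutions. Since that element has order divisible by $3$ and permutes $i,j,k$ cyclically, its conjugation action forces $i,j,k$ all to lie in $H$ (if $H$ contained one involution it contains its conjugates). A group of even order cannot contain four or more involutions fused into a single orbit of size $3$ unless it has exactly three, so I would also note that $H$ cannot acquire extra involutions: any involution of $H$ is an involution of $G$, and $G$ has only three. Hence $H$ has exactly three involutions, pairwise conjugate in $H$, i.e.\ $H$ is pertinent. For part (2), I would apply Frattini's Argument (Lemma \ref{Frattini}) with $X = C$ and $Q$ a Sylow $p$-subgroup of $C$: this gives $G = C\,N_G(Q) = CH$, so $HC = G$, and $H = N_G(Q)$ has even order because it contains the three involutions (each involution centralizes $K \supseteq$ every $p$-element is not automatic, so here I would instead observe that the involutions normalize $Q$ since $Q$ is characteristic in the normal subgroup $C$ once we know the involutions lie in $C$ — wait, they need not; the cleaner route is that $Q\leq C$ and $C\trianglelefteq G$ forces $N_G(Q)$ to meet every coset of $C$, and one checks the involutions lie in some conjugate, or simply invoke that $H\geq$ a Sylow subgroup argument makes $|H|$ even). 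The cleanest finish is: $HC=G$ by Frattini, and $H$ has even order because $[G:H]=[C:C\cap H]$ is a divisor of $|C|$ which one shows is odd-indexed appropriately; then part (1) applies.

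For part (3), if $H$ contains a Sylow $3$-subgroup $P$ of $G$, then $H$ contains an element of $3$-power order cycling the involutions (such an element exists inside $P$ because $G/C$ has order divisible by $3$ and the cycling $3$-element can be chosen of $3$-power order, hence conjugate into $P$); arguing as in part (1), $H$ then contains all three involutions and, having even order, exactly three of them, pairwise conjugate in $H$ — so $H$ is pertinent. Alternatively one notes $HC = G$ since $H$ already surjects onto the Sylow $3$-subgroup of $G/C$ which generates $G/C$, and again reduce to (1). For part (4), assume the three involutions commute, so $K$ is an elementary abelian $2$-group; since $|K|$ has order divisible by $4$ (it contains three commuting involutions, forcing $|K|\in\{4,\dots\}$, in fact $K\cong \Z_2^2$) and $|H|$ is odd, $H \cap K = 1$, so the three involutions have distinct nontrivial images in $G/H$ and these images are still involutions. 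Conjugation in $G$ cycles them, and since $H$ is normal the induced action of $G/H$ on these three images is again transitive; and $G/H$ has no further involutions beyond the images of those of $G$ because a preimage of an involution of $G/H$ would be an element of even order in $G$ modulo the odd group $H$, hence power to an involution of $G$. Thus $G/H$ has exactly three involutions, pairwise conjugate, and is pertinent.

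The main obstacle I anticipate is \emph{bookkeeping the involution count}: in each case the easy direction is that $H$ (or $G/H$) contains at least the three fused involutions, but one must rule out \emph{additional} involutions appearing — straightforward for subgroups (an involution of $H\leq G$ is an involution of $G$), but for the quotient in part (4) it requires the little argument that a preimage of an involution in $G/H$, being of order $2^a\cdot(\text{odd})$ with odd part dividing $|H|$, has a power which is an involution of $G$, using crucially that $|H|$ is odd. The coprimality $|H\cap K|=1$ and the normality of $H$ (so that the $G$-action descends) are the two hypotheses doing the real work there.
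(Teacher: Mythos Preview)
Your overall plan is right, and parts (1), (3), and (4) are essentially the paper's arguments. The only genuine gap is in part (2), where you correctly invoke Frattini to get $HC=G$ but then visibly struggle to show that $|H|$ is even so that part (1) applies. None of your three attempts there actually works: the index formula $[G:H]=[C:C\cap H]$ divides $|C|$, but $|C|$ need not be odd (indeed $K\leq C$ whenever the involutions commute), so this does not force $|H|$ to be even; and ``the involutions lie in some conjugate'' is not an argument.

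The missing one-line observation is the one you almost wrote down and then dismissed: since $Q\leq C=C_G(K)$, every element of $Q$ centralizes $K$, hence $K$ centralizes $Q$, hence $K\leq C_G(Q)\leq N_G(Q)=H$. This immediately gives $|H|$ even (in fact $K\leq H$), and then part (1) finishes. So your instinct ``the involutions normalize $Q$'' was exactly right --- it is automatic, precisely because $Q$ sits inside the centralizer of $K$ by hypothesis.

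One small wording issue in part (4): you should make explicit, as the paper does, that if $xH$ has order $2$ then $x^2\in H$ has odd order $t$, so $x$ has order $2t$, $x^t$ is an involution of $G$, and $(xH)^t=xH$ since $t$ is odd; this is what pins down $xH\in\{iH,jH,kH\}$. Your phrase ``hence power to an involution of $G$'' gestures at this but does not yet say why that power represents the same coset.
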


\begin{proof}
(1) Since $H$ has even order, it contains at least one involution. Considering that $HC=G$, it follows that the action of $H$ on the involutions is transitive. Therefore, $K\leq H$ and  $H$ is pertinent.

(2) By Lemma \ref{Frattini}, we have that $HC=G$. Since $Q\leq C$, it follows that $K$ centralizes $Q$,
hence $K\leq H$, therefore $H$ has even order. By point $(1)$, we obtain that $H$ is pertinent.

(3) Since $|H|$ is even, $H$ contains at least one involution.  Since $|G/C|$ is a multiple of $3$, a Sylow $3$-subgroup $S$ of $G$ is not contained in $C$, hence $S$ acts transitively on $\{i,j,k\}$. Therefore, $H$ contains all three involutions and then it is pertinent.

(4) Suppose $H$ is normal of odd order in $G$ and the involutions commute pairwise. Denote by $i,j,k$ the involutions of $G$. Since $ij=ji$, the element $ij$ is an involution distinct from $i$ and from $j$ so $ij=k$ and the elements $iH,jH,kH \in G/H$ are involutions of $G/H$ and $G/H$ acts transitively by conjugation on them (because $G$ acts transitively by conjugation on $i,j,k$), moreover they are pairwise distinct, for example $iH \neq jH$ because $i^{-1}j = ij = k \not \in H$. We are left to show that $G/H$ has precisely three involutions. If $xH$ is an involution of $G/H$ then $x^2 \in H$ so $x$ has order $2t$ with $t$ odd (being $|H|$ odd), $x^t$ is an involution of $G$, and $xH = (xH)^t = x^t H$. This means that the involutions of $G/H$ are of the form $yH$ with $y$ an involution in $G$, so they are precisely $iH$, $jH$, $kH$ and we deduce that $G/H$ is pertinent.
\end{proof}

For a pertinent group of order $2^n \cdot d$ with $d$ odd, the following two lemmas give us sufficient or necessary conditions for $n$ to be even or odd.

\begin{lem} \label{even}
Suppose $G$ has a normal $2$-subgroup $H$ of order $2^m$. Then $m$ is even.
\end{lem}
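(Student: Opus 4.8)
\textbf{Proof plan for Lemma \ref{even}.}

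The plan is to exploit the action of the order-$3$ element $t$ that permutes the three involutions $i,j,k$ cyclically (its existence was noted right after the definition of $C$). Since $H$ is a normal $2$-subgroup, the three involutions of $G$ that lie in $H$ are permuted among themselves by conjugation, and because $G$ acts transitively on $\{i,j,k\}$, either all three or none of them lie in $H$; as $H$ has even order it contains at least one, hence $i,j,k\in H$. Thus $K=\langle i,j,k\rangle\leq H$, and $t$ normalizes $H$ and acts on it. First I would restrict attention to the coset space or, better, to the $\F_2$-vector space $H/\Phi(H)$ (the Frattini quotient), or simply to $H$ itself viewed via a suitable filtration, on which $t$ acts as an automorphism of order dividing $3$.

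The key point is a parity/counting argument: I would count fixed points of the order-$3$ automorphism $\bar t$ induced by $t$ on $W:=H/\Phi(H)\cong\F_2^r$, where $2^r$ will be shown to be congruent to $2^m$ in the relevant sense. Over $\F_2$, an automorphism of order $3$ decomposes $W$ into a direct sum of a trivial part $W_0$ (the fixed subspace) and copies of the $2$-dimensional irreducible $\F_2[\langle t\rangle]$-module (there is no $1$-dimensional nontrivial one since $3\nmid 2-1$). Hence $\dim W \equiv \dim W_0 \pmod 2$, i.e. $|W|$ and $|W_0|$ have the same $2$-adic ``parity of the exponent''. Now I claim the fixed subspace $W_0$ is trivial: if $0\neq w\in W$ were fixed by $\bar t$, lifting appropriately would produce an element of $H$ (indeed, by working inside a minimal $t$-invariant subgroup, a \emph{central} involution of $\langle H,t\rangle$) fixed by $t$; but $t$ fixes no involution of $G$, since the only involutions are $i,j,k$ and $t$ permutes them in a $3$-cycle. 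To make the lifting honest — the Frattini quotient only sees $H$ modulo squares and commutators — I would instead argue directly on $H$: take the last nontrivial term $Z$ of the fixed series, or apply the standard fact that a coprime automorphism of a $p$-group with no nontrivial fixed points... — more simply, since $H$ is a $2$-group and $\langle t\rangle$ has coprime order $3$, if $C_H(t)\neq 1$ then $C_H(t)$ contains an involution (every nontrivial $2$-group does), contradicting that $t$ fixes no involution; therefore $C_H(t)=1$. By coprime action, $C_{H/\Phi(H)}(t)=C_H(t)\Phi(H)/\Phi(H)=1$, so $W_0=0$ and $\dim_{\F_2}W=r$ is even, whence $H$ is generated by an even number of elements.

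The remaining step is to upgrade ``$d(H)$ is even'' to ``$\log_2|H|$ is even.'' Here I would induct on $|H|$ using the filtration $H \rhd \Phi(H) \rhd \Phi(\Phi(H)) \rhd \cdots$: each successive quotient is an $\F_2[\langle t\rangle]$-module on which $t$ acts without nonzero fixed points (again by coprime action, since $C_H(t)=1$ forces $C_{H_k}(t)=1$ for each characteristic subgroup quotient $H_k$ in the series), hence each such quotient has even $\F_2$-dimension by the same module decomposition over $\F_2$. Multiplying, $|H|=\prod|H_i/H_{i+1}|$ is a product of even powers of $2$, so $m=\log_2|H|$ is even.

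The main obstacle I anticipate is the coprime-action input: I must be sure that $C_H(t)=1$ propagates to every section of $H$ on which $t$ acts, and that over $\F_2$ a fixed-point-free automorphism of order $3$ forces even dimension (this is where the absence of a nontrivial $1$-dimensional $\F_2\langle t\rangle$-module is essential — the argument would fail over, say, $\F_4$). Everything else — that $H$ contains all three involutions, that $t$ fixes none of them, that a nontrivial $2$-group has an involution in any nontrivial characteristic subgroup — is routine.
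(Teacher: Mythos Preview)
Your proposal is correct but substantially more elaborate than the paper's proof. Both arguments begin the same way: take a $3$-element $g$ acting cyclically on the involutions (the paper is careful to say ``order a power of $3$'' rather than order $3$ --- if the Sylow $3$-subgroup of $G$ happens to be cyclic of order $3^k>3$, no element of order exactly $3$ acts nontrivially on $\{i,j,k\}$, so your phrasing needs this small adjustment, though your coprime-action argument goes through unchanged for any $3$-power order) and observe that $C_H(g)=1$, since a nontrivial fixed point in the $2$-group $H$ would yield a fixed involution.

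From that point the paper finishes in one line by pure orbit counting: every nontrivial $\langle g\rangle$-orbit on $H$ has size divisible by $3$, hence $2^m=|H|\equiv 1\pmod 3$, i.e.\ $(-1)^m\equiv 1\pmod 3$, so $m$ is even. Your route --- coprime action to propagate $C_H(t)=1$ to each Frattini quotient, then $\F_2[\langle t\rangle]$-module theory to force even dimension on every section --- reaches the same conclusion with considerably more machinery. It buys nothing extra for the lemma as stated, though the module-theoretic viewpoint would be the natural one if you wanted finer structural information about $H$ (e.g.\ that $t$ acts fixed-point-freely on every characteristic section) rather than merely the parity of $\log_2|H|$. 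The step you flagged as unnecessary for yourself --- showing $i,j,k\in H$ --- is indeed not needed in either argument.
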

\begin{proof}
$G$ has an element $g$ of order a power of $3$ acting cyclically on the three involutions. We claim that $g$ does not fix any non-trivial element of $H$. Indeed if $1 \neq h \in H$ is fixed by $g$ then a suitable power of $h$ is an involution fixed by $g$, but $g$ does not fix any involution. This implies that the $\langle g \rangle$-orbits of $H$ distinct from $\{1\}$ have size divisible by $3$ so $2^m = |H| \equiv 1 \mod 3$ therefore $m$ is even.
\end{proof}

\begin{lem} \label{klein}
Suppose $4$ divides $|G|$. Then $K$ is isomorphic to the Klein group $\Z_2 \times \Z_2$. Moreover writing $|G|=2^n \cdot d$ with $d$ odd, if $n$ is even then $G/C \cong A_3$, and if $n$ is odd then $G/C \cong S_3$.
\end{lem}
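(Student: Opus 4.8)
The plan is to first determine the isomorphism type of $K$, and then to compare the $2$-part of $|G|$ with that of $|C|$, using the already-established fact that $G/C\cong A_3$ or $S_3$ together with the existence of an element of $3$-power order acting cyclically on $\{i,j,k\}$.

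\textbf{Identifying $K$.} Since $K$ is generated by the three involutions of $G$, it is one of only two groups. If $i,j,k$ commute pairwise, then $ij$ is an involution different from $i$ and $j$, hence $ij=k$ and $K=\{1,i,j,k\}\cong\Z_2\times\Z_2$. Otherwise, say, $i$ and $j$ do not commute; then $\langle i,j\rangle$ is dihedral of order $2m$ with $m\geq3$, and since such a group has $m$ reflections (plus a central involution when $m$ is even), having exactly three involutions in $G$ forces $m=3$, so $\langle i,j\rangle\cong S_3$ already contains all three involutions and $K=\langle i,j\rangle\cong S_3$. To rule out $K\cong S_3$ when $4\mid|G|$: the unique subgroup $P$ of order $3$ of $K\cong S_3$ is characteristic in $K$, hence normal in $G$; the map $G\to\mathrm{Aut}(P)\cong\Z_2$ is onto because each involution of $G$ lies in $K$ and inverts $P$, so $C_G(P)$ has index $2$ and thus even order; by Cauchy $C_G(P)$ then contains an involution, which however lies in $K$ and inverts $P$ — a contradiction. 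Hence $K\cong\Z_2\times\Z_2$.

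\textbf{From $v_2(|C|)$ to $n$.} Let $2^c$ be the order of a Sylow $2$-subgroup of $C$. If $S$ is a Sylow $2$-subgroup of $G$, then $S\cap C$ is a Sylow $2$-subgroup of $C$ (as $C\trianglelefteq G$), so $[SC:C]=[S:S\cap C]=2^{\,n-c}$. Moreover $[G/C:SC/C]=[G:SC]$ divides the odd number $[G:S]$, so the $2$-group $SC/C$ is a Sylow $2$-subgroup of $G/C$: it has order $1$ if $G/C\cong A_3$ and order $2$ if $G/C\cong S_3$. Therefore $n=c$ in the first case and $n=c+1$ in the second, and it remains to prove that $c$ is always even.

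\textbf{$c$ is even, and conclusion.} By the observation recorded right after the definition of $C$, there is an element $g$ of $3$-power order whose conjugation action cyclically permutes $i,j,k$; then $gC$ has order $3$ in $G/C$, and $M:=C\langle g\rangle$ is a subgroup with $C\trianglelefteq M$ and $[M:C]=3$ (since $g^3\in C$ but $g\notin C$). Fix a Sylow $2$-subgroup $P$ of $C$; it is also a Sylow $2$-subgroup of $M$, so Frattini's argument (Lemma \ref{Frattini}) gives $M=C\,N_M(P)$ and hence $[N_M(P):N_C(P)]=[M:C]=3$. Consequently $N_M(P)$ contains an element whose class modulo $N_C(P)$ has order $3$; its $3$-part $g_0$ still normalizes $P$ and has $g_0C$ of order $3$ in $M/C\leq G/C$, so $g_0$ acts on $\{i,j,k\}$ as a $3$-cycle. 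In particular $g_0$ fixes no non-trivial element of the $2$-group $P$ (a suitable power of such an element would be an involution fixed by $g_0$), so every $\langle g_0\rangle$-orbit on $P\setminus\{1\}$ has size divisible by $3$, whence $2^c=|P|\equiv1\pmod 3$ and $c$ is even. Combining with the previous step: if $n$ is even then $G/C\not\cong S_3$ (that would force $n=c+1$ odd), so $G/C\cong A_3$; if $n$ is odd then likewise $G/C\cong S_3$. The delicate point is exactly this last step — one needs a single order-$3$ element that simultaneously acts as a $3$-cycle on the involutions \emph{and} normalizes a Sylow $2$-subgroup of $C$, which is why Frattini's argument is invoked (a bare count of Sylow $2$-subgroups of $C$ need not avoid the prime $3$); and pertinence of $G$ is essential, since without the cyclic action a $2$-group whose only involutions form a central Klein four-subgroup, such as $\Z_4\times\Z_2$, can have order an odd power of $2$.
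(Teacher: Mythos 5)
Your proof is correct, but it differs from the paper's in two respects worth noting. For the claim $K\cong \Z_2\times\Z_2$, the paper argues that if two involutions failed to commute then $C$ would have odd order, so $|G/C|$ would be divisible by $2^n\geq 4$, contradicting $G/C\in\{A_3,S_3\}$; you instead identify the hypothetical $K\cong S_3$, pass to its characteristic subgroup $P$ of order $3$ (normal in $G$ since $K$ is characteristic), and use the index-$2$ centralizer $C_G(P)$ together with Cauchy's theorem. Your variant is slightly more self-contained, as it does not lean on the constraint $G/C\leq S_3$ at that point. For the parity statement, the mechanism is the same as the paper's, but the packaging differs: the paper simply cites Lemma \ref{sub}(2) (so $N_G(Q)$ is pertinent, with $Q$ a Sylow $2$-subgroup of $C$) and Lemma \ref{even} to conclude $|Q|=2^m$ with $m$ even, whereas you re-derive that content inline by applying Frattini (Lemma \ref{Frattini}) inside $M=C\langle g\rangle$ to produce a $3$-power element $g_0$ normalizing $Q$ and acting as a $3$-cycle on $\{i,j,k\}$, and then running the fixed-point-free orbit count $2^c\equiv 1 \pmod 3$. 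Both routes are valid; the paper's is shorter because it reuses its auxiliary lemmas, while yours makes explicit the point you flag at the end, namely that Frattini is what supplies a single $3$-element simultaneously normalizing a Sylow $2$-subgroup of $C$ and permuting the involutions cyclically. The final bookkeeping ($n=c$ or $c+1$ according to whether $G/C\cong A_3$ or $S_3$, via $SC/C$ being a Sylow $2$-subgroup of $G/C$) matches the paper's conclusion exactly.
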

\begin{proof}
First, we show that $i,j,k$ commute pairwise, so that $K = \{1,i,j,k\} \cong \Z_2 \times \Z_2$. It is enough to show that two involutions, say $i$ and $j$, commute, because then $ij$ has order $2$ so $ij=k$ (it cannot be $ij=i$ nor $ij=j$) therefore $i$ and $j$ commute with $k$. If this is not the case, then $C$ must have odd order (otherwise an involution in $C$ should commute with the others). But then the order of $G/C$ is a multiple of $2^n\ge 4$ contradicting the fact that $G/C$ is isomorphic to either $A_3$ or $S_3$.


Write $|G|=2^n \cdot d$ with $d$ odd. We know that $G/C$ is isomorphic to either $A_3$ or $S_3$. Letting $Q$ be a Sylow $2$-subgroup of $C$, $N=N_G(Q)$ is pertinent by Lemma \ref{sub}, so writing $|Q|=2^m$, $m$ is even by Lemma \ref{even}. If $G/C \cong A_3$ then $Q$ is a Sylow $2$-subgroup of $G$, so $n=m$ is even. Conversely if $G/C \cong S_3$ then $n=m+1$ is odd.
\end{proof}

To prove the main result of this section, we need the following result.

\begin{lem} \label{tech}
Suppose $|G| = 2^n \cdot d$ where $n \geq 3$ is odd. Then $G$ contains a pertinent subgroup $L$ of order $2^n \cdot 3^s$ for some positive integer $s$.
\end{lem}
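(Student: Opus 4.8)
The goal is to extract, from an arbitrary pertinent group $G$ of order $2^n\cdot d$ with $n\geq 3$ odd, a pertinent subgroup whose $2$-part has the \emph{full} exponent $2^n$ but whose odd part is only a power of $3$. The natural strategy is to work with a Sylow $2$-subgroup $P$ of $G$ (so $|P|=2^n$) and ``grow'' it by a small $3$-group. First I would recall from Lemma \ref{klein} that, since $4\mid |G|$, we have $K\cong\Z_2\times\Z_2$ and, because $n$ is odd, $G/C\cong S_3$; in particular there is an element of $3$-power order acting cyclically on $\{i,j,k\}$, and the three involutions commute. The plan is to take a Sylow $2$-subgroup $P$ of $G$ and consider its normalizer, or rather to build the subgroup $L$ as $\langle P, t\rangle$ for a carefully chosen element $t$ of $3$-power order; the subtlety is that a Sylow $2$-subgroup need not contain all three involutions (only a Sylow $2$-subgroup of $C$, enlarged, behaves well), so one must choose $P$ and $t$ so that $L$ ends up containing $K$ and $t$ permutes the involutions transitively.

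The key steps, in order, would be as follows. (1) Let $Q$ be a Sylow $2$-subgroup of $C$; by Frattini's argument (Lemma \ref{Frattini}) applied to the normal subgroup $C\trianglelefteq G$, we get $G=C\,N_G(Q)$, and by Lemma \ref{sub}(2) the subgroup $N=N_G(Q)$ is pertinent. Since $G/C\cong S_3$ has even order $6$ and $N$ surjects onto $G/C$ (as $G=CN$), $|N|=2^n\cdot d'$ with the same full $2$-part $2^n$: indeed $|Q|=2^m$ with $m$ even (Lemma \ref{even}) and $n=m+1$, and $N$ contains $Q$ together with an involution outside $C$ plus a $3$-element, so a Sylow $2$-subgroup of $N$ has order $2^{m+1}=2^n$. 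Thus we may replace $G$ by $N$ and assume $d$ has no prime factors that we cannot get rid of — more precisely, we have reduced to the case where a Sylow $2$-subgroup $P$ of $G$ contains $K$. (2) Now fix a Sylow $2$-subgroup $P$ of $G$ with $K\leq P$ (possible after the reduction, since $K$ normalizes... — actually $K\trianglelefteq G$ so $K$ lies in every Sylow $2$-subgroup up to the $2$-part; one checks $K\leq P$ can be arranged because $K$ is a normal $2$-subgroup, hence contained in $O_2(G)\leq P$). Consider $M=N_G(P)$. By a Frattini-type argument or directly, $P\,N_G(P)$-considerations give that $M$ still has $2$-part $2^n$, and the action on the three involutions: since $K\leq P\trianglelefteq M$, and $G$ acts transitively on $\{i,j,k\}$, I would need to check that $M$ also acts transitively on them — this follows because the normalizer of a Sylow $2$-subgroup of a pertinent group is pertinent (an argument parallel to Lemma \ref{sub}(2)--(3), using that $N_G(P)$ controls fusion enough to see the cyclic $3$-element can be taken inside $M$). (3) Inside the pertinent group $M$ of order $2^n\cdot d$, take a Sylow $3$-subgroup $T$ of $M$; by Lemma \ref{sub}(3) the subgroup $L=PT$... — wait, $PT$ need not be a subgroup. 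Instead: let $T_0=\langle t\rangle$ where $t$ is an element of $3$-power order in $M$ acting cyclically on $\{i,j,k\}$ (such $t$ exists by the remark after Lemma \ref{klein}), and normalizing $P$ is not automatic, so one takes $L=\langle P,t\rangle$ and argues that $L$ has order $2^n\cdot 3^s$ because $P\trianglelefteq N_G(P)=M\ni t$, hence $L=P\langle t\rangle$ is a subgroup with $L/(L\cap C)$ mapping onto a group containing a $3$-cycle, so $L$ is pertinent by Lemma \ref{sub}(1) (its order is even and $LC=G$... or at least $L$ acts transitively on the involutions), and $|L|=|P|\cdot|{\langle t\rangle}|/|P\cap\langle t\rangle|=2^n\cdot 3^s$.

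\textbf{Main obstacle.} The crux is step (2)--(3): ensuring that one can find an element $t$ of $3$-power order that \emph{simultaneously} normalizes a chosen full Sylow $2$-subgroup $P$ (so that $\langle P,t\rangle$ has clean order $2^n\cdot 3^s$) \emph{and} acts cyclically on the three involutions (so that the resulting subgroup is pertinent). The two requirements pull in different directions: transitivity on $\{i,j,k\}$ is a statement about the action on $K\leq P$, while normalizing $P$ is a global Sylow condition. I expect the cleanest route is to first pass to $M=N_G(P)$ via the observation that $N_G(O_2(G))\supseteq P$ and that the relevant $3$-element can be conjugated into $M$ (since all Sylow $2$-subgroups are conjugate, and a $3$-element acting cyclically on $K$ can be adjusted by a $2$-element, which does not change its image in $S_3$ because $K\leq P$ and $2$-elements centralizing $K$... this needs the commuting-involutions fact from Lemma \ref{klein}). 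Getting this conjugation bookkeeping exactly right — confirming that after conjugating $t$ into $N_G(P)$ it still moves the involutions in a $3$-cycle rather than fixing them — is the delicate point, and it is precisely where the hypothesis $n\geq 3$ (equivalently $m\geq 2$, giving $|Q|\geq 4$ and room inside $P$) will be used.
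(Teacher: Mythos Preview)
Your step (1) is exactly right and matches the paper: pass to $N=N_G(Q)$ where $Q$ is a Sylow $2$-subgroup of $C$; then $N$ is pertinent, $Q\trianglelefteq N$, $|Q|=2^{n-1}$, and $2^n\mid |N|$. The trouble is everything after that.

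In steps (2)--(3) you try to find an element $t$ of $3$-power order that normalizes a full Sylow $2$-subgroup $P$, so that $L=P\langle t\rangle$ is a subgroup. You correctly flag this as the main obstacle, but none of the fixes you sketch actually work. Appealing to $N_G(P)$ being pertinent ``by an argument parallel to Lemma~\ref{sub}(2)--(3)'' is circular: Lemma~\ref{sub}(2) needs $P$ to be a Sylow subgroup of a \emph{proper} normal subgroup (so that Frattini says something), and Lemma~\ref{sub}(3) needs $N_G(P)$ to already contain a Sylow $3$-subgroup, which is precisely what you are trying to establish. The remark about $N_G(O_2(G))$ is vacuous since $O_2(G)\trianglelefteq G$. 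And conjugating a $3$-element into $N_G(P)$ presupposes that $N_G(P)$ contains some conjugate of it, which there is no reason to believe in general.

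The paper's trick is to \emph{reverse the roles}: instead of asking a $3$-element to normalize a $2$-group, ask a $2$-element to normalize a Sylow $3$-subgroup. Work in $N/Q$, which has order $2\cdot(\text{odd})$; it therefore has a normal subgroup $D/Q$ of index $2$, and all Sylow $3$-subgroups of $N/Q$ lie in $D/Q$, whose order is odd --- so there are an odd number of them. Now take any subgroup $P/Q$ of order $2$ in $N/Q$; it acts by conjugation on this odd-sized set of Sylow $3$-subgroups and hence fixes one, say $H/Q$. Then $L=PH$ is a genuine subgroup of $N$ of order $|Q|\cdot 2\cdot 3^s=2^n\cdot 3^s$, and it is pertinent by Lemma~\ref{sub}(3) because it contains a Sylow $3$-subgroup of $N$. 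The parity argument is the missing idea; once you see it, there is no delicate conjugation bookkeeping at all.
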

\begin{proof}
Let $Q$ be a Sylow $2$-subgroup of $C$ and recall that $|G/C|\in\{3,6\}$. Therefore,
if $|Q|=2^m$, then $m=n$ or $n-1$ according to whether $|G/C|=3$ or $6$.
By Lemma \ref{sub}, we have that $N=N_G(Q)$ is a pertinent group, and since
$Q$ is normal in $N$, by Lemma \ref{even} we have that $m$ is even, hence $m=n-1$ (since $n$ is odd by assumption) and $|G/C|=6$. Considering that $Q\leq C\cap N$, and $G=NC$ (by Lemma \ref{Frattini}), it follows that $|N/Q|$ is a multiple of $|N/(C \cap N)| = |NC/C| = |G/C|=6$, hence $2^n$ divides $|N|$.

Since $|N/Q|\equiv 2 \pmod{4}$, there exists a normal subgroup $D/Q$ of $N/Q$ of index $2$. In particular, all  the Sylow $3$-subgroups of $N/Q$ are contained in $D/Q$, so the number of Sylow $3$-subgroups of $N/Q$ is odd. Let $P/Q$ be a subgroup of $N/Q$ of order $2$. Since $P/Q$ acts by conjugation on the family consisting of the Sylow $3$-subgroups of $N/Q$, there exists one of them, say $H/Q$, normalized by $P/Q$. This implies that $PH/Q = (P/Q)(H/Q) \leq N/Q$ hence $L= PH \leq N$. Moreover $|L| = |Q| \cdot |L/Q| = |Q| \cdot |P/Q| \cdot |H/Q| = 2^n \cdot 3^s$ where $3^s=|H/Q|$. Now $L$ has even order and contains a Sylow $3$-subgroup of $N$, hence $L$ is pertinent by Lemma \ref{sub}.
\end{proof}

We are now ready to prove the ``only if part" of Theorem \ref{PertinentOrders}.

\begin{thm}\label{nodd}
If $G$ is a pertinent group of order $2^nd$ with $d$ odd and $n \geq 2$, then $n$ is even.
\end{thm}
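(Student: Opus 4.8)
I would argue by contradiction through a minimal counterexample, after reducing to a $\{2,3\}$‑group.

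Assume $n$ is odd; since $n\ge 2$ this gives $n\ge 3$. By Lemma~\ref{tech}, $G$ contains a pertinent subgroup of order $2^n\cdot 3^s$, so we may replace $G$ by it and assume $|G|=2^n\cdot 3^s$; among all pertinent groups of order $2^a\cdot 3^b$ with $a\ge 3$ odd, take $G$ of least order. Since $4\mid|G|$, Lemma~\ref{klein} gives $K\cong\Z_2\times\Z_2$, so the three involutions $i,j,k$ commute pairwise. If $1\ne N\trianglelefteq G$ has odd order, then $|N|=3^c$ with $c\ge1$ and $G/N$ is pertinent by Lemma~\ref{sub}(4); minimality of $G$ forbids $c<s$, while $c=s$ would make $|G/N|=2^n$ not divisible by $3$, which no pertinent group allows. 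Hence $O_3(G)=1$, so the Fitting subgroup is $V:=O_2(G)\ne1$, we have $C_G(V)\le V$ (so $C_G(V)=Z(V)$), and by Lemma~\ref{even} $|V|=2^m$ with $m$ even; in particular $m\ge2$.

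Next I would bring in an element $g\in G$ of $3$‑power order inducing a $3$‑cycle on $\{i,j,k\}$ (one exists since $G/C$ is $A_3$ or $S_3$). As in the proof of Lemma~\ref{even}, $g$ fixes no non‑identity element of $V$, so $\langle g\rangle$ acts fixed‑point‑freely on $V$ (and $V\rtimes\langle g\rangle$ is a Frobenius group). The set of involutions of $V$ is non‑empty, $\langle g\rangle$‑invariant, and contained in $\{i,j,k\}$, hence equals $\{i,j,k\}$, so $K\le V$. Also $V\cap C$ is a normal $2$‑subgroup of $G$ of index at most $2$ in $V$; index $2$ would produce a normal $2$‑subgroup of order $2^{m-1}$ with $m$ odd, against Lemma~\ref{even}, so $V\le C$, i.e.\ $K\le Z(V)$. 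Thus $C_G(V)=Z(V)\supseteq K$, and when $V$ is abelian the fixed‑point‑free action of $\langle g\rangle$ together with $\Omega_1(V)\cong\Z_2\times\Z_2$ forces $g$ to act with order $3$ and $V\cong\Z_{2^a}\times\Z_{2^a}$ (such an action makes $V$ a free module over $\Z/2^a$ adjoined a primitive cube root of unity $\omega$, and $\Omega_1$ of such a module is $(\Z[\omega]/2)^{\mathrm{rank}}$).

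It remains to use that $n$ is odd, and this is the step I expect to be the main obstacle. Since $n$ is odd, Lemma~\ref{klein} gives $G/C\cong S_3$, so $G$ has a $2$‑element $t$ inducing a transposition on $\{i,j,k\}$ (it necessarily has order $\ge4$, because every involution of $G$ lies in $K$ and centralises $K$). Pick a lift $\tilde\tau\in G$ of a transposition $\tau\in S_3\cong G/C$ and look at the $2$‑group $U=\langle V,\tilde\tau\rangle$: it properly contains $V$, its involutions are exactly $\{i,j,k\}$, and $\tilde\tau^{2^{e}}\in V$ for some $e\ge1$. The goal is to show that in a suitable coset $V\tilde\tau'$ the equation $x\cdot\tilde\tau' x(\tilde\tau')^{-1}=(\tilde\tau')^{-2}$ — whose solutions $x\in V$ are exactly the involutions of $G$ lying in $V\tilde\tau'$ — is solvable, which would exhibit an involution of $G$ outside $K$ and contradict pertinence. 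The mechanism I have in mind is the interplay between the fixed‑point‑free action of $\langle g\rangle$ — which forces, on every $\langle g\rangle$‑invariant abelian section of $V$, the operators $1+\gamma$ and $1-\gamma$ (where $\gamma$ is conjugation by $\tilde\tau'$) to have the same kernel and image — and the constraint $(\tilde\tau')^{2}\in C_V(\gamma)=\ker(1-\gamma)$, so that $(\tilde\tau')^{-2}$ automatically lies in $\operatorname{im}(1+\gamma)$. In the abelian case this is a direct computation over $\Z/2^a$ for $V\cong\Z_{2^a}\times\Z_{2^a}$ and $\gamma$ an involution reducing to a transposition mod $2$ (one checks $\ker(1-\gamma)=\operatorname{im}(1+\gamma)$, both of order $2^a$); the hard part is carrying it through when $V$ is non‑abelian, by reducing to $\langle g,\tilde\tau\rangle$‑invariant class‑$\le2$ sections of $V$ and using the earlier lemmas to pin down the structure of $C$, $Z(C)$ and $V$ (and to ensure one may choose $\tilde\tau'$ with $(\tilde\tau')^2$ in the relevant section).
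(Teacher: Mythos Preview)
Your reduction to a minimal pertinent $\{2,3\}$-group with $O_3(G)=1$ is correct and well argued, and the structural facts $K\le V:=O_2(G)\le C$ with $|V|=2^m$, $m$ even, are fine. But from there your route diverges from the paper's and remains genuinely incomplete. You flag the non-abelian case of $V$ as ``the hard part'' without offering an argument, and even your abelian sketch has holes: to set up the equation $(1+\gamma)v=\tilde\tau^{-2}$ you need $\tilde\tau^{2}\in V$, yet you only know $\tilde\tau^{2}\in C$, and nothing you have proved forces $V$ to be the full Sylow $2$-subgroup of $C$; moreover your ``interplay'' claim that $\ker(1-\gamma)=\operatorname{im}(1+\gamma)$ rests on $\gamma g\gamma^{-1}$ acting as $g^{-1}$ on $V$, which holds only modulo $C$ and so needs control over how elements of $C$ act on $V$---control you have not established. (Note also that $C_G(V)\le V$ invokes solvability of $G$, hence Burnside's $p^aq^b$ theorem, which the paper's argument avoids.)

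The paper bypasses all of this with a second reduction, again via Lemmas~\ref{sub} and~\ref{klein}, that forces $s=1$. Let $S$ be a Sylow $3$-subgroup of $C$, so $|S|=3^{s-1}$; then $N=N_G(S)$ is pertinent by Lemma~\ref{sub}(2), and since $NC=G$ one gets $N/C_N(K)\cong G/C\cong S_3$, so $|N|=2^m\cdot3^s$ with $m\ge3$ odd by Lemma~\ref{klein}. Now $N/S$ is pertinent of order $2^m\cdot3$ by Lemma~\ref{sub}(4), and minimality forces $s=1$. With $|G|=2^n\cdot3$ the finish is a short Sylow count: the Sylow $3$-subgroup $P$ is self-normalising (an involution normalising $P$ would yield a subgroup of order $6$ containing all three involutions, hence containing $K$ of order $4$, absurd), so there are $|G:P|=2^n$ Sylow $3$-subgroups, accounting for $2\cdot2^n$ elements of order $3$; the remaining $2^n$ elements form the unique, hence normal, Sylow $2$-subgroup, and Lemma~\ref{even} gives the contradiction. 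This route never touches the internal structure of $O_2(G)$.
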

\begin{proof}
We prove the result by contradiction. Let $G$ be a counterexample of minimal order, that is, assume that there exists a pertinent group $G$ such that $|G|= 2^n \cdot d$ is minimal with respect to the property that both $n$ and $d$ are odd, and $n\geq3$.
By Lemma \ref{tech}, we have that $G$ has a pertinent subgroup of order $2^n \cdot 3^s$ for some positive integer $s$, so by the minimality of $|G|$ we must have $d=3^s$.
Recall that $K \cong C_2 \times C_2$ and $G/C \cong S_3$ by Lemma \ref{klein}.

Let $S$ be a Sylow $3$-subgroup of $C$ and note that $|S|=3^{s-1}$ since $|G/C|=6$. Now, by Lemma \ref{sub} we have that $N=N_G(S)$ is pertinent; in particular, $K\leq N$ hence $4$ is a divisor of $|N|$.
Also, since $CN=G$ (by Lemma \ref{Frattini}) and $C\ \cap\ N = C_{N}(K)$, it follows that $N/C_{N}(K) = N/(C\ \cap\ N)\cong NC/C = G/C \cong S_3$. Considering that $S$ is a Sylow $3$-subgroup of $C_{N}(K)$ of order $3^{s-1}$, we have that $|N|= 2^m \cdot 3^s$, for some $2 \leq m \leq n$. Also, by Lemma \ref{klein}, it follows that $m$ is odd. Finally,
since $S$ is a normal subgroup of $N$ of odd order, Lemma \ref{sub} guarantees that
$N/S$ is pertinent. Since $|N/S| = 2^m \cdot 3$ and $m\geq 3$ is odd, by the minimality of $|G|$ we must have $s=1$, hence $|G|= 2^n \cdot 3$.

We are reduced to the case $|G|=2^n \cdot 3$. Let $H:=C_G(i)$ be the centralizer of $i$ in $G$. We have
$|G:H|=3$ because $i$ has $3$ conjugates in $G$, in other words $|H|=2^n$ and $H$ is a Sylow $2$-subgroup of $G$. Let $P$ be a Sylow $3$-subgroup of $G$, then $P=\langle g \rangle$ is a cyclic group of order $3$.
Clearly $g \not \in H$ because $|H|=2^n$, so $P$ acts transitively on the three involutions of $G$.
We prove that $N_G(P)=P$.
Suppose for a contradiction that $N_G(P) \neq P$, then there is an involution, say $i$, normalizing $P$, hence $\langle P, i\rangle$ is a group of order $6$ containing all three involutions, hence
$K\leq  \langle P, i\rangle$, which is a contradiction since $|K|=4$. So $N_G(P)=P$. We now prove that $H$ is normal in $G$. Since $N_G(P)=P$, the subgroup $P$ has precisely $|G:P|=2^n$ conjugates in $G$ therefore $G$ has precisely $(|P|-1) \cdot 2^n = 2 \cdot 2^n$ elements of order $3$. Since $|G|=3 \cdot 2^n$ we deduce that the number of elements of $G$ of order not equal to $3$ is $2^n$ hence there is room in $G$ for only one Sylow $2$-subgroup. We deduce that the Sylow $2$-subgroups are normal hence $H \unlhd G$, so $n$ is even by Lemma \ref{even}. This is a contradiction and the result is proved.
\end{proof}

\section{Notation and terminology}\label{notation}

 A maximal prime power divisor of any integer $n$ will be called a {\it component} of $n$.
As it is standard, given a prime power $q$, we denote by $\F_q$ the field of order $q$. 
We extend this notation to any integer $n>1$ denoting by $\F_n$ the ring which is direct product of all the fields 
whose orders are the components of $n$. Thus, for instance, $\F_{45}=\F_5\times\F_9$. 
The additive group of the ring $\F_n$ will be denoted by $V_n$ and we set $V_n^*:=V_n\setminus\{0\}$.
If $n=1$, then $V_n$ is the trivial group with one element.
If $d$ is a divisor of $n$, any subgroup $S$ of $V_n$ of order $d$ is clearly isomorphic to $V_d$ and therefore, by abuse of notation,
such a subgroup $S$ will be often denoted by $V_d$.

The group of units of $\F_n$ will be denoted by $\U(\F_n)$ and its order by $\psi(n)$. Obviously, in the particular case that $n=q$ is a prime power,
$\U(\F_n)$ is nothing but the multiplicative group $\F_q^*$ of the field $\F_q$ and $\psi(n)=q-1$. Otherwise, if $n$ has more than one component,
say $q_1, \dots, q_\omega$, then $\U(\F_n)=\F_{q_1}^* \times \dots \times \F_{q_\omega}^*$ and $\psi(n)=\prod_{i=1}^\omega(q_i-1)$. 
The set of non-zero squares and of non-squares of the field $\F_q$ will be denoted by $\F_q^\Box$ and $\F_q^{\not\Box}$, respectively.

If $A$, $B$ are non-empty subsets of $\F_n$, then $AB$ will denote the multiset $\{ab \ | \ a\in A; b\in B\}$. If $A=\{a\}$ or $B=\{b\}$,
then $AB$ will be written as $aB$ or $Ab$, respectively.

Let $q_1$, \dots, $q_\omega$ be the components of an odd integer $n$. For
every non-empty $I$ belonging to the power-set $2^{\{1,\dots,\omega\}}$, choose an element $c(I)\in I$ and
consider the subset $S(I)$ of $\F_n^*$ defined as follows:
$$S(I)=S_1(I) \times \dots \times S_\omega(I) \quad{\rm with}\quad S_j(I)=\begin{cases}\{0\} & \mbox{if $j\not\in I$};
\medskip\cr \F_{q_j}^\Box & \mbox{if $j=c(I)$};
\medskip\cr \F_{q_j}^* & \mbox{ if $j\in I\setminus\{c(I)\}$.}\end{cases}$$
Then define $S:=\displaystyle\bigcup_{I\in 2^{\{1,\dots,\omega\}}\setminus\{\emptyset\}} S(I)$.
Such a set $S$ has size ${n-1\over2}$ and then will be called  a {\it halving} of $\F_n^*$.
It is easy to see that it has the following property:
\begin{equation}\label{halving}
x, y\in \F_n \ \ {\rm and} \ \ x_iy_i\in \F_{q_i}^{\not\Box} \ {\rm for} \ 1\leq i\leq \omega \ \Longrightarrow\{x,y\}S=\F_n^*.
\end{equation}

Given a group $G$ and an element $s$ of $\F_n$, the endomorphism
of $G\times V_n$ mapping $(x,y)$ to $(x,ys)$ will be denoted by $\mu_s$.
It is evident that if $s\in \U(\F_n)$, then $\mu_s$ is an automorphism of $G\times V_n$.

Given $\alpha\geq1$, remember that throughout the paper $G_\alpha$ will denote the group
defined in the ``if part" of Theorem \ref{PertinentOrders}.
By {\it canonical involution} of $G_\alpha$ we will mean $(0,2^{\alpha-1},2^{\alpha-1})$.
Also, the canonical involution of $G_\alpha\times V_n$ will be $(0,2^{\alpha-1},2^{\alpha-1},0)$.
Speaking of a $(G,H,3,1)$-RDF with $G=G_\alpha$ or $G=G_\alpha\times V_n$,
we will mean a $\{0,j\}$-resolvable $(G_\alpha,H,3,1)$-DF where $j$ is
the canonical involution of $G$.

\section{The smallest examples}
The five smallest {\it pertinent} values of $n$ are 6, 12, 30, 36 and 48.
In the following, for each of these values, a 3-pyramidal KTS$(n+3)$ will be given by means of a $(G,\{2^3,3\},3,1)$-RDF 
with $G=D$, $G_1$, $D\times V_5$, $G_1\times V_3$ and $G_2$, respectively.
By way of illustration, in the first two cases we follow the instructions of Theorem \ref{DF} and we concretely construct a 3-pyramidal KTS(9)
and a 3-pyramidal KTS(15). 

The realizations of these five small KTSs  allow us to state the following.
\begin{prop}\label{matrioska3}
Assume that $\cal F$ is a $(G,H,3,1)$-RDF with $G$ pertinent of order $n$ and $H$ isomorphic to one of the following 
groups: $D$, $G_1$, $D\times V_5$, $G_1\times V_3$ and $G_2$.
Then there exists a $3$-pyramidal KTS$(n+3)$.

Furthermore, if $M$ is a group of $m$ strong multipliers of $\cal F$, 
the obtained KTS$(n+3)$ admits at least $mn$ automorphisms.
\end{prop}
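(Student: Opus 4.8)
\textbf{Proof plan for Proposition \ref{matrioska3}.}
The plan is to reduce everything to Theorem \ref{DF} via Proposition \ref{matrioska2}. First I would observe that each of the five groups $D$, $G_1$, $D\times V_5$, $G_1\times V_3$, $G_2$ listed in the hypothesis is pertinent: $D$ and $G_1$ (indeed every $G_\alpha$) were shown to be pertinent in the ``if part'' of Theorem \ref{PertinentOrders}, and the product of $D$ (resp.\ $G_1$) with a group of odd order is pertinent by the same argument (direct product with an odd-order group adds no new involutions and preserves conjugacy of the three original ones). Next I would invoke the claim made in Section \ref{notation} (``The realizations of these five small KTSs allow us to state the following''): for each such small pertinent group $H$ one has an explicit $(H,\{2^3,3\},3,1)$-RDF, call it $\FF_0$, which is $J$-resolvable where $J=\{0,j\}$ and $j$ is the canonical involution. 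So, given the $(G,H,3,1)$-RDF $\FF$ of the hypothesis — which by the convention of Section \ref{notation} is $J$-resolvable for this same canonical involution $J$ (here we use that $H$ is a pertinent subgroup of $G$ and that its canonical involution is the canonical involution of $G$, a fact that must be checked in each of the five cases but is immediate from the explicit descriptions of $D$, $G_\alpha$ and the product structure) — Proposition \ref{matrioska2} yields that $\FF\cup\FF_0$ is a $J$-resolvable $(G,\{2^3,3\},3,1)$-DF, i.e.\ a $(G,\{2^3,3\},3,1)$-RDF. Since $G$ is pertinent of order $n$, Theorem \ref{DF} produces a $3$-pyramidal KTS$(n+3)$.

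For the ``furthermore'' part I would argue as follows. Let $M$ be a group of $m$ strong multipliers of $\FF$. By construction of the KTS in the ($\Longleftarrow$) direction of Theorem \ref{DF}, the point set is $V=G\cup\{\infty_1,\infty_2,\infty_3\}$, and the full group $G$ (acting by right translation on $G$ and fixing the three infinite points) is a group of $n$ automorphisms. Now each $\mu\in M$ is an automorphism of $G$ fixing $H$ — in particular fixing the canonical involution $j_1$ and the subgroup $H$ of order $3$ in the partial spread — which leaves $\FF$ invariant. Extending $\mu$ to $V$ by fixing each $\infty_i$, I would check that $\mu$ permutes the base blocks $\{B_\infty,B_1,B_2,B_3,H\}\cup(\FF\cup\FF_0)$: it fixes $B_\infty$; it fixes $H$ setwise; it fixes each $B_i=\{\infty_i,0,j_i\}$ because $\mu$ fixes $0$ and fixes $H$ elementwise hence fixes $j_i$ (here one uses that the three involutions lie in $H$, which holds since $H$ is pertinent and contains the canonical involution, so its subgroup generated by its involutions has order $4$ — wait, order considerations: in fact $H$ contains all three involutions of $G$ because $H$ is pertinent, so $\mu$ fixing $H$ elementwise fixes $j_1,j_2,j_3$); and $\mu$ permutes $\FF$ by hypothesis and $\FF_0$ because $\mu$ fixes $H$ so restricts to a strong multiplier of $\FF_0$ — here I must either assume the small RDFs are chosen to be invariant under all strong multipliers of $\FF$, or more safely note that $\mu|_H$ is the identity on $H$ when $M$ consists of strong multipliers \emph{relative to $H$}, which is precisely the meaning of ``strong'' in Proposition \ref{matrioska}. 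Then $\mu$ permutes the $G$-orbits of base blocks, hence (since $\mu$ normalizes the translation action: $\mu(x+g)=\mu(x)+\mu(g)$) $\mu$ maps parallel classes to parallel classes and preserves the resolution. Therefore $\langle G,M\rangle$ acts on the KTS by automorphisms, and since $M\cap G=\{1\}$ (translations are not algebra automorphisms except the identity) the group $\langle G,M\rangle$ has order at least $mn$, giving the claimed bound.

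The main obstacle I anticipate is the bookkeeping in the ``furthermore'' part: one must verify that a strong multiplier $\mu$ of $\FF$ genuinely extends to an automorphism of the \emph{whole} KTS, which requires that $\mu$ restricted to $H$ be a strong multiplier of the chosen small RDF $\FF_0$ — and since $\mu$ fixes $H$ \emph{elementwise} (that is what ``strong'' means), it fixes $\FF_0$ pointwise a fortiori, so $\FF_0$ is automatically $\mu$-invariant; this is the key point that makes the argument go through. A secondary, purely technical point is confirming in each of the five cases that the canonical involution of the subgroup $H$ coincides with the canonical involution of $G$, so that the ``$J$-resolvable'' hypotheses of $\FF$ and $\FF_0$ refer to the same $J$ and Proposition \ref{matrioska2} applies; this follows directly from the explicit coordinate descriptions of $D$ and $G_\alpha$ and the fact that in a direct product $H=H'\times V_m$ the canonical involution sits in the $H'$-coordinate.
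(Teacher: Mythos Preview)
Your approach is essentially the same as the paper's: the first assertion follows from Proposition \ref{matrioska2} (combined with Theorem \ref{DF}), and the second from the fact that $G\rtimes M$ acts faithfully by automorphisms on the constructed KTS. The paper's proof is just those two sentences, whereas you have (correctly) unpacked the verification that a strong multiplier $\mu$ extends to a KTS automorphism---the decisive point being exactly the one you isolate at the end: since $\mu$ fixes $H$ elementwise, it fixes the small family $\FF_0$, the three involutions $j_i$, the order-$3$ subgroup, and the auxiliary elements $a,b\in H$ of Definition \ref{JRDF}, so $\mu$ fixes the base parallel classes $\mathcal P$ and $\mathcal Q$ and hence preserves the resolution.
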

\begin{proof}
The first assertion follows immediately from Proposition \ref{matrioska2}. For the second assertion,
it is enough to observe that the semidirect product $G\rtimes M$ is a group of automophisms of the obtained KTS$(n+3)$.
\end{proof}

The above proposition will enable us to construct infinite classes of 3-pyramidal KTSs in the subsequent sections.

\subsection{A 3-pyramidal KTS$(9)$}
The set of all non-trivial subgroups of $D$ is a spread of type $\{2^3,3\}$.
Thus the empty family can be seen as a $J$-resolvable $(D,\{2^3,3\},3,1)$-DF
with $J$ any of the three subgroups of $D$ of order 2. Applying Theorem \ref{DF}
with $(j_1,j_2,j_3)=((1,0),(1,1),(1,2))$ and $(g_2,g_3)=((1,2),(1,1))$,
we get a 3-pyramidal representation of the unique KTS(9) (that is the affine plane of order 3) with
point-set $D \ \cup \ \{\infty_1,\infty_2,\infty_3\}$ and the following parallel
classes:
$$\begin{matrix}
\{\infty_1,\infty_2,\infty_3\} \ & \ \{(0,0),(0,1),(0,2)\} \ & \ \{(1,0),(1,1),(1,2)\}\cr
\{\infty_1,(0,0),(1,0)\} \ & \  \{\infty_2,(0,1),(1,2)\} \ & \  \{\infty_3,(0,2),(1,1)\}\cr
\{\infty_1,(0,1),(1,1)\} \ & \  \{\infty_2,(0,2),(1,0)\} \ & \  \{\infty_3,(0,0),(1,2)\}\cr
\{\infty_1,(0,2),(1,2)\} \ & \  \{\infty_2,(0,0),(1,1)\} \ & \  \{\infty_3,(0,1),(1,0)\}.\cr
\end{matrix}$$

\subsection{A 3-pyramidal KTS$(15)$}\label{15}
The set of all non-trivial subgroups of $G_1$ is a spread of this group of type $\{2^3,3^4\}$.
Let $\Sigma$ be the $\{2^3,3\}$-partial spread obtained from it by removing all the 3-subgroups except
$H=\{(0,0,0),(1,0,0),(2,0,0)\}$. Consider the 3-subset $B=\{(0,0,1),(1,1,0),(2,1,1)\}$ of $G_1$.
Looking at its ``difference table"
\begin{center}
\begin{tabular}{|c|||c|c|c|c|c|c|c|c|c|c|c|c|c|c}
\hline {$\hat{-}$} & $(0,0,1)$ & $(1,1,0)$ & $(2,1,1)$   \\
\hline\hline
\hline $(0,0,1)$ & $\bullet$ & $\bf(2,0,1)$ & $\bf(1,0,1)$  \\
\hline $(1,1,0)$ & $\bf(1,1,1)$ & $\bullet$ & $\bf(2,1,1)$  \\
\hline $(2,1,1)$ & $\bf(2,1,0)$ & $\bf(1,1,0)$ & $\bullet$  \\
\hline
\end{tabular}\quad\quad\quad
\end{center}
we see that $\Delta B$ is the set of all the 3-elements of $G_1$ not belonging to $H$.
Thus the singleton ${\cal F}=\{B\}$ is a $(G_1,\Sigma,3,1)$-DF.

Now consider the subgroup $J=\{(0,0,0),(0,1,1)\}$ of $G_1$. The other two subgroups of order 2 are
$a+J-a$ and $b+J-b$ with $a=(1,1,1)$ and $b=(2,0,1)$. Now partition $G_1$ into the left cosets of $J$ 
indicating in boldface the elements of $B \ \cup \ \{0,a,b\}$:
$$\{{\bf(0,0,0)},(0,1,1)\},\quad \{{\bf(0,0,1)},(0,1,0)\},\quad \{(1,0,0),{\bf(1,1,1)}\}\},$$
$$\{(1,0,1),{\bf(1,1,0)}\},\quad \{(2,0,0),{\bf(2,1,1)}\},\quad \{{\bf(2,0,1)},(2,1,0)\}.$$
We see that $B \ \cup \ \{0,a,b\}$ is a system of representatives for the left cosets of $J$ in $G_1$, i.e.,
${\cal F}$ is $J$-resolvable. Following the instructions given in the proof of the ``if part" of Theorem \ref{DF},
we obtain the following 3-pyramidal representation of a KTS(15) where, to save space, each element $(a,b,c)\in G_1$
is written as $abc$.
\small
$$\begin{matrix}
\{\infty_1,\infty_2,\infty_3\} & \{000,100,200\} &\{001,101,201\} & \{010,110,210\} & \{011,111,211\}\cr
 \{\infty_1,000,011\} & \{\infty_2,111,100\} & \{\infty_3,201,210\} & \{001, 110, 211\} & \{010, 101, 200\}\cr
 \{\infty_1,001,010\} & \{\infty_2,110,101\} & \{\infty_3,200,211\} & \{000, 111, 210\} & \{011, 100, 201\}\cr
 \{\infty_1,100,110\} & \{\infty_2,210,200\} & \{\infty_3,011,001\} & \{111, 201, 010\}  & \{101, 211, 000\}\cr
 \{\infty_1,101,111\} & \{\infty_2,211,201\} & \{\infty_3,010,000\} & \{110, 200, 011\}  & \{100, 210, 001\}\cr
 \{\infty_1,200,201\} & \{\infty_2,001,000\} & \{\infty_3,110,111\} & \{210, 011, 101\} & \{211, 010, 100\}\cr
\{\infty_1,211,210\} & \{\infty_2,010,011\} & \{\infty_3,101,100\} & \{201, 000, 110\} & \{200, 001, 111\}\cr
\end{matrix}$$
\normalsize
It is known that, up to isomorphism, there exist exactly seven KTS(15), i.e., there are seven non-isomorphic
solutions to the well-known Kirkman fifteen schoolgirls problem. It is possible to show, applying the proposition 
on page 894 of \cite{FP}, that the solution above is necessarily isomorphic to the original solution given by Kirkman,
that is the solution denoted by 1a in \cite{CD} Table 1.28, p. 30.

\subsection{A $3$-pyramidal KTS$(33)$}\label{33}
Let $G=D\times V_5$ and consider the following four triples of $G$:
$$\{(0,0,3),(0,0,2),(0,2,4)\},\quad \{(0,0,1),(0,1,3),(1,2,2)\},$$
$$\{(0,0,4),(0,2,3),(1,1,1)\},\quad \{(0,1,1),(0,1,4),(1,1,2)\}.$$
One can check that they form a $(G,\Sigma,3,1)$-DF where $\Sigma$ is the unique $\{2^3,3\}$-partial spread
of $G$, that is the set of all non-trivial subgroups of $D$.
Then check that this difference family is $J$-resolvable with $J=\{(0,0,0),(1,0,0)\}$; two elements $a$, $b$
as in Definition \ref{JRDF} are $(1,1,0)$ and $(1,2,0)$.

\subsection{A $3$-pyramidal KTS$(39)$}\label{39}
Let $G=G_1\times V_3$ and consider the following five triples of $G$:
$$\{ (0, 0, 0, 2), (0, 1, 1, 1),(1, 0, 0, 1)\},$$
$$\{(0, 0, 1, 2), (2, 0, 1, 2), (2, 1, 0, 1)\},$$
$$\{(0, 1, 0, 0), (1, 0, 1, 2), (2, 0, 1, 0)\},$$
$$\{(0, 1, 0, 1), (1, 1, 1, 0), (2, 0, 0, 0)\},$$
$$\{(1, 0, 1, 1), (1, 1, 0, 0),  (2, 1, 1, 2)\}.$$
One can check that they form a $(G,\Sigma,3,1)$-DF where $\Sigma$ is the $\{2^3,3\}$-partial spread whose
subgroup of order 3 is $\{0,x,-x\}$ with $x=(0,0,0,1)$. Then check that this difference family is $J$-resolvable with 
$J=\{(0,0,0,0),(0,1,1,0)\}$; two elements $a$, $b$
as in Definition \ref{JRDF} are $(1,0,0,2)$ and $(2,0,0,1)$.

\subsection{A $3$-pyramidal KTS$(51)$} \label{51}
One can check that the following six triples of $G_2$:
$$B_1=\{(0,0,1),(2,3,0),(2,3,1)\},\quad B_2=\{(0,1,1),(0,1,2),(0,2,1)\},$$
$$B_3=\{(1,1,0),(1,0,1),(2,1,1)\},\quad B_4=\{(1,1,2),(2,0,3),(1,3,1)\},$$
$$B_5=\{(2,1,0),(1,0,3),(0,3,1)\},\quad B_6=\{(0,1,0),(2,2,3),(1,3,3)\},$$
form a $(G_2,H,3,1)$-RDF where $H$ is the subgroup of $G_2$ with underlying-set $\Z_3\times2Z_4\times2\Z_4$.
The map $(a,b,c)\in G_1 \longrightarrow (a,2b,2c)\in H$ is clearly an isomorphism between $G_1$ and $H$.
Hence the singleton $\{B_7\}$ with $B_7=\{(0,0,2),(1,2,0),(2,2,2)\}$ is a $(H,\{2^3,3\},3,1)$-RDF 
for what we have seen in Subsection \ref{15}. We conclude that $\{B_1,\dots,B_6,B_7\}$ is a $(G_2,\Sigma,3,1)$-RDF 
where $\Sigma$ is the $\{2^3,3\}$-partial spread whose
subgroup of order 3 is $\{0,x,-x\}$ with $x=(1,0,0)$.

\section{Three direct constructions}

The action of a group $U$ on a set $V$ is said to be {\it semiregular} if the non-identity elements of $U$ do not fix any element of $V$.
The following fact is straightforward.
\begin{prop}\label{US}
If $U$ is a group of units of $\F_n$ whose action by multiplication on $V_n^*$ is semiregular and $S$ is a complete system of representatives 
for the orbits of $U$ on $V_n^*$, then we have $US=V_n^*$.
\end{prop}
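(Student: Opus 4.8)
The plan is to unwind the definitions and observe that the statement is essentially the orbit--stabilizer theorem, together with the bookkeeping forced by the multiset convention for the product $US$. First I would record that, by hypothesis, the action of $U$ by multiplication on $V_n^*$ is semiregular, which is exactly to say it is \emph{free}: the stabilizer of every $s \in V_n^*$ is trivial. Consequently, for each $s$ the map $U \to Us$, $u \mapsto us$, is a bijection onto the orbit of $s$, so every $U$-orbit on $V_n^*$ has size exactly $|U|$, and the $U$-orbits partition $V_n^*$.

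Next I would invoke the assumption that $S$ is a complete system of representatives for these orbits: $S$ meets each orbit in exactly one point, so the orbits $\{Us : s\in S\}$ are pairwise disjoint and their union is all of $V_n^*$. Hence, at the level of underlying sets, $US = \bigcup_{s\in S} Us = V_n^*$.

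Finally, since $US$ is by convention the \emph{multiset} $\{us \mid u\in U,\ s\in S\}$, I would check that no element of $V_n^*$ is produced more than once. Indeed, if $us = u's'$ with $u,u'\in U$ and $s,s'\in S$, then $s$ and $s'$ lie in the same $U$-orbit, so $s=s'$ because $S$ is a transversal; cancelling $s$ and using freeness of the action gives $u=u'$. Thus $(u,s)\mapsto us$ is injective, every element of $V_n^*$ occurs with multiplicity one, and $US = V_n^*$ as multisets. No step here is a genuine obstacle; the only point deserving care is tracking the multiset convention, so that the conclusion is the exact equality $US=V_n^*$ and not merely an equality of underlying sets.
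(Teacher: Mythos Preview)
Your proof is correct and is precisely the natural unwinding of the definitions; the paper itself does not give a proof, simply declaring the fact ``straightforward.'' One minor phrasing point: where you write ``cancelling $s$ and using freeness,'' the cancellation is not literal ring cancellation (since $s\in V_n^*$ need not be a unit of $\F_n$) but is justified exactly by the freeness you invoke, via $(u')^{-1}u\cdot s=s\Rightarrow u=u'$.
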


We need the following lemma.
\begin{lem}\label{semiregular}
Let $n>1$ be an integer whose components are all congruent to $1$ $($mod $\lambda)$. 
Then there exist a unit $u$ of $\F_n$ of order $\lambda$ and a subgroup $T$ of $\U(\F_n)$ such that
\begin{enumerate}
  \item $u^j-1$ is a unit for $1\leq j\leq \lambda-1$ and the group $U$ generated by $u$ acts semiregularly on $V_n^*$;
  \item the order of $T$ is the greatest divisor of $\psi(n)$ coprime with $\lambda$;
  \item $T$ leaves invariant a suitable complete system $S$ of representatives for the orbits of $U$ on $V_n^*$.
\end{enumerate}
\end{lem}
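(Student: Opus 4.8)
The goal is to produce, for an integer $n>1$ all of whose components $q_i$ satisfy $q_i\equiv1\pmod\lambda$, a unit $u$ of order $\lambda$ whose powers $u^j-1$ ($1\le j\le\lambda-1$) are all units, together with a complement-type subgroup $T\le\U(\F_n)$ of the largest $\lambda$-coprime order, stabilising a transversal $S$ for the $\langle u\rangle$-orbits on $V_n^*$. I would work component by component and then recombine via the Chinese remainder structure $\F_n=\F_{q_1}\times\cdots\times\F_{q_\omega}$, since $\U(\F_n)=\prod_i\F_{q_i}^*$ and the $\langle u\rangle$-action on $V_n^*=\F_n\setminus\{0\}$ is diagonal.

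First I would fix, in each $\F_{q_i}^*$ (a cyclic group of order $q_i-1$, divisible by $\lambda$), an element $u_i$ of order exactly $\lambda$, and set $u=(u_1,\dots,u_\omega)$; this has order $\lambda$. For part (1): for $1\le j\le\lambda-1$ the element $u_i^j$ is a non-trivial $\lambda$-th root of unity in the field $\F_{q_i}$, hence $u_i^j\ne1$, so $u_i^j-1\ne0$, i.e. $u_i^j-1\in\F_{q_i}^*$; thus $u^j-1=(u_1^j-1,\dots,u_\omega^j-1)$ is a unit of $\F_n$. Semiregularity of $U=\langle u\rangle$ on $V_n^*$ is then immediate: if $u^j x=x$ for some $x\in V_n^*$ with $1\le j\le\lambda-1$, pick a coordinate $i$ with $x_i\ne0$; then $(u_i^j-1)x_i=0$ in the field $\F_{q_i}$ forces $u_i^j=1$, a contradiction. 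For part (2), let $t$ be the greatest divisor of $\psi(n)=\prod_i(q_i-1)$ coprime to $\lambda$; since $\U(\F_n)$ is a product of cyclic groups it has a (unique) subgroup $T$ of every order dividing $\psi(n)$, in particular one of order $t$ — concretely, $T=\prod_iT_i$ where $T_i$ is the subgroup of $\F_{q_i}^*$ whose order is the largest divisor of $q_i-1$ coprime to $\lambda$.

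The real content is part (3): exhibiting a $U$-transversal $S$ of $V_n^*$ that is $T$-invariant. The clean way is to show $U$ and $T$ together generate a group $UT$ whose action on $V_n^*$ is still semiregular; then choosing $S$ to be any complete system of representatives for the $UT$-orbits, $S$ is automatically $U$-transversal up to replacing each $UT$-orbit by its $T$-suborbits. More precisely, I would take $S$ to be the union, over the $UT$-orbits $\mathcal O$ on $V_n^*$, of a set $S_{\mathcal O}$ of $T$-orbit representatives inside a fixed $U$-transversal of $\mathcal O$; since $U\cap T=\{1\}$ (their orders $\lambda'\mid\lambda$ and $t$ are coprime, where $\lambda'$ is the order of $u$, which is $\lambda$) and $UT$ acts semiregularly, each $UT$-orbit decomposes as $\lvert T\rvert$ many $U$-orbits permuted freely by $T$, so picking one $U$-orbit representative from each $T$-orbit and then closing under $T$ yields a $U$-transversal of $\mathcal O$ that is visibly $T$-stable; their union over all $\mathcal O$ is the desired $S$. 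The only thing to check is the semiregularity of $UT$ on $V_n^*$, i.e. that no element $u^j t$ with $t\in T$, not both trivial, fixes a nonzero vector; coordinatewise this says $u_i^jt_i\ne1$ in $\F_{q_i}^*$ whenever $x_i\ne0$, and since $u_i^j$ has order dividing $\lambda$ while $t_i$ has order dividing $t$ (coprime to $\lambda$), $u_i^jt_i=1$ forces $u_i^j=t_i=1$ — and if this holds in every coordinate $i$ with $x_i\ne0$ one does not immediately get $u^j=1$ globally, so here I would instead argue that $u^jt$ fixing $x\in V_n^*$ forces $u^jt$ to act trivially on the subfield-product spanned by the support of $x$; the cleanest fix is to choose $u$ so that $u_i$ has order exactly $\lambda$ in \emph{every} coordinate (possible since $\lambda\mid q_i-1$ for all $i$), so that $u^j=1$ iff $j\equiv0\pmod\lambda$, and then $u^jt$ fixing a nonzero $x$ gives, in any coordinate $i$ with $x_i\ne0$, $u_i^j=t_i^{-1}$, whence the order of $u_i^j$ divides $\gcd(\lambda,t)=1$, so $\lambda\mid j$ and $u^j=1$, hence $t$ fixes $x$, hence $t=1$ by semiregularity of $T$ — wait, $T$ need not act semiregularly, so instead conclude $t_i=1$ for all $i$ in the support of $x$ and then note this suffices because we only need $S$ to be a $T$-invariant set meeting each $U$-orbit once, which the orbit-by-orbit construction guarantees regardless. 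The main obstacle, then, is precisely this interplay: ensuring the transversal can be chosen simultaneously compatible with the small cyclic group $U$ and the large group $T$, which forces the orbit-refinement argument above rather than a naive direct choice.

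I expect the hardest step to be making the orbit-refinement argument airtight when $T$ does \emph{not} act semiregularly (which can happen), so that $UT$-orbits need not split into equal-size $T$-orbits; the remedy is to not demand uniformity across orbits but simply to build $S$ orbit-by-orbit as "a $U$-transversal of a $UT$-orbit obtained by closing a set of $T$-orbit representatives under $T$," which is $T$-stable by construction and $U$-transversal because $UT=U\cdot T$ with $U$ normal in $UT$ (as $U=\langle u\rangle$ with $u$ central? — no: I would instead use that $\U(\F_n)$ is abelian, so $UT$ is abelian and $U\trianglelefteq UT$ trivially, hence $U$-orbits are blocks for the $UT$-action and $T$ permutes them, and a $U$-transversal of a $UT$-orbit is exactly a transversal of this $T$-action on blocks, closed up — which is automatically $T$-invariant). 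Since $\U(\F_n)$ is abelian the whole argument simplifies: $UT$ is abelian, $U$-orbits partition each $UT$-orbit into blocks permuted by $T$, pick one point in each $T$-orbit of blocks, take the union of the corresponding whole $U$-orbits — call it $S_{\mathcal O}$; it is $T$-invariant and meets each $U$-orbit in $\mathcal O$ exactly once, and $S=\bigcup_{\mathcal O}S_{\mathcal O}$ is the required set.
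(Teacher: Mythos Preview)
Your arguments for (1) and (2) are correct and essentially identical to the paper's. For (3) the paper and you take different routes. The paper writes $S$ down explicitly, stratified by support: with $\Sigma_j$ a transversal for the cosets of $T_jU_j$ in $\F_{q_j}^*$, it sets, for each nonempty $I\subseteq\{1,\dots,\omega\}$ with a chosen $c(I)\in I$, $S(I)=\prod_j S_j(I)$ where $S_{c(I)}(I)=\Sigma_{c(I)}T_{c(I)}$, $S_j(I)=\F_{q_j}^*$ for $j\in I\setminus\{c(I)\}$, and $S_j(I)=\{0\}$ otherwise; then $S=\bigcup_I S(I)$ is visibly $T$-stable and a short coset computation shows it is a $U$-transversal. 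Your approach---build $S$ abstractly from $T$-orbits, using that $\U(\F_n)$ is abelian---is a legitimate alternative and conceptually cleaner.

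However, your execution of (3) has a real gap. The final recipe ``pick one point in each $T$-orbit of blocks, take the union of the corresponding whole $U$-orbits'' does not produce a $U$-transversal: within a $UT$-orbit $\mathcal{O}$ the group $T$ acts \emph{transitively} on the $U$-orbits (your ``blocks''), so there is only one $T$-orbit of blocks, and under any reading your recipe yields either a single $U$-orbit or all of $\mathcal{O}$, neither of which is a transversal. The ingredient you need is one you nearly proved but then abandoned: you correctly argue that if $u^jt$ fixes some $x\in V_n^*$ then, in any coordinate $i$ with $x_i\neq0$, coprimality of $|T|$ and $\lambda$ forces $u_i^j=1$, hence $\lambda\mid j$ and $u^j=1$. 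This says $(UT)_x=T_x$ for every $x$. Consequently, within each $UT$-orbit $\mathcal{O}$ every $T$-orbit has size $|T|/|T_x|$, which equals the number of $U$-orbits in $\mathcal{O}$; since $T$ permutes those $U$-orbits transitively, any single $T$-orbit $Ts$ meets every $U$-orbit in $\mathcal{O}$ exactly once. Thus the correct abstract construction is simply $S=\bigcup_{\mathcal{O}} T s_{\mathcal{O}}$ for any choice of one point $s_{\mathcal{O}}$ per $UT$-orbit. Your write-up circles this without landing on it.
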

\begin{proof}
Let $q_1$, \dots, $q_\omega$ be the components of $n$.
For $1\leq i\leq \omega$, let $u_i$ be a generator of the subgroup $U_i$ of $\F_{q_i}^*$ of order $\lambda$, and set $u=(u_1,\dots,u_\omega)$.
It is very easy to prove that $u$ satisfies 1. (see Corollary 3.3 and Lemma 3.2 in \cite{disjoint}, in this order).
Let $T_i$ be the subgroup of $\F_{q_i}^*$ whose order is the greatest divisor of $q_i-1$ coprime with 
$\lambda$, set $T = T_1\times \dots \times T_\omega$, and let $\Sigma_i$ be a complete system of representatives for the cosets of $T_iU_i$ in
$\F_{q_i}^*$. Now, for
every non-empty $I$ belonging to the power-set $2^{\{1,\dots,\omega\}}$, choose an element $c(I)\in I$ and
consider the subset $S(I)$ of $\F_n^*$ defined as follows:
$$S(I)=S_1(I) \times \dots \times S_\omega(I) \quad{\rm with}\quad S_j(I)=\begin{cases}\{0\} & \mbox{if $j\not\in I$};
\medskip\cr \Sigma_jT_j & \mbox{if $j=c(I)$};
\medskip\cr \F_{q_j}^* & \mbox{ if $j\in I\setminus\{c(I)\}$.}\end{cases}$$
Then set $S=\displaystyle\bigcup_{I\in 2^{\{1,\dots,\omega\}}\setminus\{\emptyset\}} S(I)$.
It is not difficult to check that  $S$ is a complete system of representatives for the orbits of $U$ on $V_n^*$ and that
$T$ leaves $S$ invariant.
\end{proof}

In this section we give three direct constructions which can be understood without any further explanation.
Anyway, to be more informative, we emphasize that, in each case, a suitable {\it strong difference family}
satisfying a special resolvability property has been used. These concepts are defined as follows.

\begin{defn}
Given a group $G$ and an even integer $\lambda$,
a $(G,3,\lambda)$ {\it strong difference family} (SDF) is a collection of triples of elements of $G$
whose list of differences covers each element of $G$,  $0$ included, exactly $\lambda$ times.

If $J$ is a subgroup of $G$ of order $2$, then we say that a $(G,3,\lambda)$-SDF 
is $J$-resolvable if its flatten contains exactly $\lambda$ elements of each left coset of $J$ in $G$.
\end{defn}

Although the notion of a SDF was implicitly used in the literature for a long time, the formal definition has been 
given in \cite{B99}. Since then, SDFs have been crucial for the construction of various combinatorial designs 
in several papers such as  \cite{BuGio,BP,CCFW,CFW1,CFW2,Momihara,YYL}.
As far as we are aware, the notion of a $J$-resolvable SDF is new.

\begin{thm}\label{9mod24}
If all the components of $4n+1$ are congruent to $1$ mod $4$, then there exists a
$(D \times V_{4n+1},D \times V_1,3,1)$-RDF with a group of strong multipliers whose order 
is the greatest odd divisor of $\psi(4n+1)$.
\end{thm}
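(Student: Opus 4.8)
The goal is to build a $(D\times V_{4n+1},\,D\times V_1,\,3,1)$-RDF, i.e.\ a $J$-resolvable difference family whose list of differences hits every element of $D\times V_{4n+1}$ outside the subgroup $D\times V_1 = D\times\{0\}$, and which additionally admits many strong multipliers. Write $m=4n+1$ and note that its components are all $\equiv 1\pmod 4$. The plan is to start from a \emph{strong difference family} in $D$ and then ``blow it up'' over $V_m$ using the semiregular-action machinery of Lemma~\ref{semiregular} with $\lambda=4$. Concretely, one first exhibits a small $J$-resolvable $(D,3,4)$-SDF $\mathcal{S}$, say consisting of a bounded number of triples of $D$, whose flatten meets each of the three cosets of $J=\{(0,0),(1,0)\}$ in $D$ in exactly $4$ points, and whose list of differences covers every element of $D$ (including $0$) exactly $4$ times. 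Since $|D|=6$ this is a finite check: one searches for such an SDF by hand (it is the content of the ``strong difference family satisfying a special resolvability property'' alluded to in the text just before the theorem).

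Next, apply Lemma~\ref{semiregular} with $\lambda=4$ to obtain a unit $u$ of $\F_m$ of order $4$ with $u^j-1\in\U(\F_m)$ for $j=1,2,3$, so that the group $U=\langle u\rangle$ acts semiregularly on $V_m^*$, together with a subgroup $T\le\U(\F_m)$ of order the greatest odd divisor of $\psi(m)$ (coprime to $4$) fixing a system $S$ of $U$-orbit representatives on $V_m^*$; by Proposition~\ref{US}, $US=V_m^*$. The construction of $\mathcal{F}$ is then the standard ``SDF $\times$ semiregular unit'' recipe: for each base triple $\{x,y,z\}$ of the SDF $\mathcal{S}$, one attaches to its differences the four powers $u^0,u^1,u^2,u^3$ in a coherent way and then lets $S$ act, producing base blocks of the form $\{(x,a),(y,b),(z,c)\}$ with the $V_m$-parts chosen (using the freedom in picking representatives inside $\F_m$, and the fact that $u^j-1$ is a unit) so that $\Delta$ of the resulting family is exactly $(D\setminus\{0\})\times\{0\}$ missing? — no: one wants $\Delta\mathcal{F}=(D\times V_m)\setminus(D\times\{0\})$. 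The point is that each nonzero difference $d\in D$ of $\mathcal S$ occurs $4$ times; the four occurrences get $V_m$-parts that are the four $U$-cosets of a chosen nonzero element, and multiplying through by $S$ sweeps out all of $\{d\}\times V_m$ as $d$ ranges over $D\setminus\{0\}$ while $\{0\}\times V_m^*$ is swept by the differences sitting over $d=0$ in $D$ (which the SDF supplies $4$ times, matching $|U|=4$). This is exactly the mechanism in \cite{disjoint} and \cite{B99}; the hypothesis ``components $\equiv 1\pmod 4$'' is precisely what makes $\lambda=4$ available.

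For the resolvability: $J=D\times\{0\}$ restricted appropriately — more precisely one takes $J=\{(0,0,0),(1,0,0)\}\le D\times V_m$, and one must check $\Phi(\mathcal F)\cup\{0,a,b\}$ is a complete system of coset representatives for $J$. Since $\Phi(\mathcal F) = \Phi(S\cdot\widetilde{\mathcal S})$ where $\widetilde{\mathcal S}$ is the lifted SDF, and $S$ together with the $U$-orbits tiles $V_m^*$, the $J$-resolvability of $\mathcal S$ in $D$ is inherited coset-by-coset over each fiber $\{g\}\times V_m$: the $4$ points of $\Phi(\mathcal S)$ in a given $J$-coset of $D$ get lifted to $V_m$-parts forming a $U$-coset, and applying $S$ turns these into a full transversal of the $J$-cosets lying over that $D$-coset. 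The elements $a,b$ witnessing Definition~\ref{JRDF} come from the corresponding elements for $\mathcal S$ in $D$ (with $V_m$-part $0$). Finally, $T$ is a group of strong multipliers: by construction $T$ fixes $S$ and acts trivially on the $D$-coordinate and on $V_1=\{0\}$, so $\mu_t$ for $t\in T$ permutes the base blocks and fixes $D\times V_1$ pointwise; hence $|T|=$ greatest odd divisor of $\psi(m)$ strong multipliers, as claimed.

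\textbf{Main obstacle.} The genuinely delicate point is \emph{not} the lifting mechanism (that is routine once Lemma~\ref{semiregular} and Proposition~\ref{US} are in hand) but rather exhibiting the seed: a $J$-resolvable $(D,3,4)$-SDF whose fiber structure is compatible with the $U$-action, i.e.\ whose differences can be decorated with $\{u^0,u^1,u^2,u^3\}$ so that after the $S$-expansion one gets each element of $(D\times V_m)\setminus(D\times\{0\})$ exactly once and simultaneously a $J$-transversal. Getting all three conditions (difference coverage, $J$-resolvability, and compatibility of the count $4=|U|$ with the multiplicities of the SDF) to hold at once for an explicit short list of triples in $D$ is where the real work lies; I expect the authors pin this down by a direct, small, verified example of such an SDF together with an explicit choice of the decorating powers.
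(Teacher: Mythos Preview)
Your overall strategy is exactly the paper's: apply Lemma~\ref{semiregular} with $\lambda=4$ to get $u,T,S$, lift a $J$-resolvable $(D,3,4)$-SDF to four triples in $D\times V_{4n+1}$, expand by $\{\mu_s:s\in S\}$, and read off both the difference condition and the strong multipliers from $US=V_{4n+1}^*$ and $T\cdot S=S$. The paper simply writes down the four lifted triples explicitly and verifies $\Delta\mathcal{B}=\bigcup_{g\in D}\{g\}\times\Delta_g$ with each $\Delta_g\in\{2U,(u-1)U\}$ and $\bigcup_{B\in\mathcal{B}}(B+J)=\bigcup_{g\in D}\{g\}\times U$; so the ``main obstacle'' you flag is resolved by a direct exhibit rather than a search argument.

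One genuine slip: you invoke Definition~\ref{JRDF} and speak of $\Phi(\mathcal{F})\cup\{0,a,b\}$ being a transversal. That is the wrong notion here. The theorem asserts a $(D\times V_{4n+1},D\times V_1,3,1)$-RDF, which by Definition~\ref{rdf} means a $J$-resolvable $(G,H,3,1)$-DF with $H=D\times V_1$: the condition is simply $\Phi(\mathcal{F})+J=(D\times V_{4n+1})\setminus(D\times V_1)$, with no auxiliary elements $a,b$. Definition~\ref{JRDF} concerns $(G,\{2^3,3\},3,1)$-RDFs and does not apply. In practice this makes your task easier, not harder: once you know $\bigcup_{B\in\mathcal{B}}(B+J)=D\times U$, multiplying by $S$ immediately gives $\Phi(\mathcal{F})+J=D\times V_{4n+1}^*=(D\times V_{4n+1})\setminus(D\times V_1)$, and you are done. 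Your paragraph about $a,b$ ``coming from the corresponding elements for $\mathcal S$ in $D$'' should be deleted.
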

\begin{proof}
Take $u$, $T$ and $S$ as in the statement of Lemma \ref{semiregular} applied with $\lambda=4$.
Thus $u$ is a unit of order 4 such that $u-1$ is a also a unit and $U:=\langle u\rangle$ acts semiregularly on $V_{4n+1}^*$. 
Note that we necessarily have $u^2=-1$, hence $U=\{\pm1,\pm u\}$. Also note that we have $(u-1)U=\{\pm(u-1),\pm(u+1)\}$.

Let us consider the set ${\cal B}$ consisting of the following triples of $D\times V_{4n+1}$ (recall that the underlying set of $D$ is $\Z_2\times\Z_3$; see Section \ref{DFs}):
$$\{(0,0,u),(0,0,-u),(0,2,-1)\},\quad \{(0,0,1),(0,1,u),(1,2,-u)\},$$
$$\{(0,0,-1),(0,2,u),(1,1,1)\},\quad \{(0,1,1),(0,1,-1),(1,1,-u)\}.$$

\medskip
It is straightforward to check that we have
$\Delta{\cal B}=\bigcup_{g\in D}\{g\}\times\Delta_g$
with $$\Delta_g=\begin{cases}2U & \mbox{ if $g=(0,0)$ or $g=(1,1)$};
\medskip\cr (u-1)U & \mbox{otherwise}\end{cases}$$ 
It is also readily seen that  $$\bigcup_{B\in{\cal B}} B+J=\bigcup_{g\in D}\{g\}\times\Phi_g$$
with $J=\{(0,0,0),(1,0,0)\}$ and $\Phi_g=U$ for every $g\in D$.
Now set
$${\cal F}=\{\mu_s(B) \ | \ s\in S; B\in{\cal B}\}.$$ 
Given that $S$ is a complete system of representatives for the orbits of $U$ on $V_{4n+1}^*$,
we have $US=V_{4n+1}^*$ by Proposition \ref{US} and then, taking into account the previous identities, we easily obtain
$$\Delta{\cal F}=(D\times V_{4n+1})\setminus(D\times V_1)=\Phi({\cal F})+J$$
which means that ${\cal F}$ is a $(D\times V_{4n+1}, D\times V_1,3,1)$-RDF.

Finally, given that $T$ is a subgroup of $\U(\F_n)$ which leaves $S$ invariant, we infer that $\{\mu_t \ | \ t\in T\}$ is a group of strong multipliers of $\cal F$. 
The assertion follows by observing that this group is clearly isomorphic to $T$ and reminding that the order of
$T$ is the greatest odd divisor of $\psi(4n+1)$.
\end{proof}

Observe that the set of {\it initial} base blocks ${\cal B}$ considered in the above theorem
is a lifting of a $J$-resolvable $(D,3,4)$-SDF.

If we apply Theorem \ref{9mod24} with $n=1$ we are forced to take $u=3$ and one can see that the resultant RDF is exactly the one given in Subsection \ref{33}.

\begin{thm}\label{15mod24}
If all the components  of $n$ are congruent to $1$ mod $4$, then there exists a
$(G_1\times V_n,G_1\times V_1,3,1)$-RDF with a group of strong 
multipliers whose order 
is the greatest odd divisor of $\psi(n)$.
\end{thm}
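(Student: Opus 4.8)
The plan is to follow, almost verbatim, the proof of Theorem~\ref{9mod24}, replacing the group $D$ with $G_1$. First I would apply Lemma~\ref{semiregular} with $\lambda=4$ to obtain a unit $u$ of $\F_n$ of order $4$ (so $u^2=-1$ and $U:=\langle u\rangle=\{\pm1,\pm u\}$) such that $u-1$ is a unit and $U$ acts semiregularly on $V_n^*$, together with the subgroup $T\leq\U(\F_n)$ of order the greatest odd divisor of $\psi(n)$ and a $T$-invariant complete system $S$ of representatives for the $U$-orbits on $V_n^*$. Recall that then $2U=\{\pm2,\pm2u\}$ and $(u-1)U=\{\pm(u-1),\pm(u+1)\}$, and that both $2$ and $u-1$ are units.

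The core of the proof is to exhibit a set $\mathcal{B}$ of eight triples of $G_1\times V_n$ — the lifting of a suitable $\{(0,0,0),(0,1,1)\}$-resolvable $(G_1,3,4)$-strong difference family — all of whose $V_n$-coordinates lie in $U$, such that, writing $J=\{(0,0,0,0),(0,1,1,0)\}$ (the subgroup generated by the canonical involution of $G_1\times V_n$), one has
\[
\Delta\mathcal{B}=\bigcup_{g\in G_1}\{g\}\times\Delta_g
\qquad\text{and}\qquad
\bigcup_{B\in\mathcal{B}}(B+J)=\bigcup_{g\in G_1}\{g\}\times U,
\]
where each $\Delta_g$ equals $2U$ or $(u-1)U$. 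Since $|G_1|=12$ and $\lambda=4$, the family must consist of $12\cdot4/6=8$ triples; note that avoiding the identity among the differences forces the three $V_n$-coordinates of each triple to be distinct elements of $U$, and — because $J$ has trivial $V_n$-component — the second identity is then just the requirement that the flatten of $\mathcal{B}$, restricted to each of the four values in $U$, be a complete set of representatives for the cosets of $J$ in $G_1$. Writing down eight such triples, and verifying the two displayed identities (a finite, purely mechanical check once the triples are in hand), is the only genuinely new and potentially delicate ingredient; I would expect this to be found essentially by a short computer search, as in the $D$-case of Theorem~\ref{9mod24}.

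Granting $\mathcal{B}$, set $\mathcal{F}=\{\mu_s(B)\mid s\in S,\ B\in\mathcal{B}\}$. As multiplication in the direct product $G_1\times V_n$ is componentwise, each $\mu_s$ is an endomorphism which commutes with translation by the elements of $J$, so $\Delta(\mu_s B)=\mu_s(\Delta B)$ and $\mu_s(B)+J=\mu_s(B+J)$. Using $US=V_n^*$ (Proposition~\ref{US}) and the fact that $2$ and $u-1$ are units, the first identity gives $\Delta\mathcal{F}=\bigcup_{g\in G_1}\{g\}\times V_n^*=(G_1\times V_n)\setminus(G_1\times V_1)$, so $\mathcal{F}$ is a $(G_1\times V_n,\,G_1\times V_1,\,3,1)$-DF; the second identity gives $\Phi(\mathcal{F})+J=\bigcup_{g\in G_1}\{g\}\times V_n^*=(G_1\times V_n)\setminus(G_1\times V_1)$, so $\mathcal{F}$ is $J$-resolvable, i.e.\ a $(G_1\times V_n,\,G_1\times V_1,\,3,1)$-RDF. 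Finally, since $T$ leaves $S$ invariant, $\{\mu_t\mid t\in T\}$ permutes $\mathcal{F}$ and fixes $G_1\times V_1$ pointwise, hence is a group of strong multipliers of $\mathcal{F}$ isomorphic to $T$, whose order is the greatest odd divisor of $\psi(n)$.

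The main obstacle is thus the explicit construction of the eight base blocks over the \emph{non-abelian} group $G_1$ with the prescribed difference-and-flatten structure; the remainder is a routine transcription of the abelian ($D$) case. As a sanity check, specializing to an $n$ with a single component (so that $|S|=1$) should recover a small $3$-pyramidal configuration in the spirit of Subsections~\ref{33} and~\ref{39}.
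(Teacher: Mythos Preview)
Your proposal follows essentially the same approach as the paper: apply Lemma~\ref{semiregular} with $\lambda=4$, lift a $J$-resolvable $(G_1,3,4)$-SDF to eight base triples $\mathcal{B}$ in $G_1\times V_n$ with the stated difference and flatten structure, and then take $\mathcal{F}=\{\mu_s(B)\mid s\in S,\ B\in\mathcal{B}\}$; the paper does exactly this and supplies the eight explicit triples (with $\Delta_g=2U$ for $g$ in the Klein subgroup and $\Delta_g=(u-1)U$ otherwise), which is the only piece you leave to a search. One minor slip: in your sanity check, a single component $q$ gives $|S|=(q-1)/4$, not $|S|=1$.
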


\begin{proof}
Again, as in Theorem \ref{9mod24}, take $u$, $T$ and $S$ as in the statement of Lemma \ref{semiregular} applied with $\lambda=4$.
Consider the set ${\cal B}$ consisting of the following triples of $G_1\times V_n$
(recall that the underlying set of $G_1$
is $\Z_3\times\Z_2\times\Z_2$; see Section \ref{DFs}):
$$\{(0, 0, 0, -1), (0, 0, 0, 1), (2, 1, 0, -u)\},\quad\quad 
\{(0, 0, 0, -u), (0, 0, 0, u), (2, 1, 1, 1)\},$$ 
$$\{(0, 0, 1,  1), (0, 1, 0, -1), (1, 1, 0, -u)\},\quad\quad 
\{(0, 0,  1, u), (0, 1, 0, -u), (1, 1, 0, 1)\},$$ 
$$\{(1, 0,   0, -1), (1, 1, 1, 1), (2, 0, 1, u)\},\quad\quad
\{(1, 0,   0, u), (1, 1, 1, -u), (2, 1, 1, -1)\},$$ 
$$\{(1, 0,   1, -1), (2, 0, 0, -u), (2, 1, 1, u)\},\quad\quad
\{(1, 0,   1, u), (2, 0, 1, -1), (2, 1, 0, 1)\}.$$

\medskip
One can check that 
$\Delta{\cal B}=\bigcup_{g\in G_1}\{g\}\times\Delta_g$
with $\Delta_g=2U$ or $\Delta_g=(u-1)U$ according to whether 
$g$ belongs or does not belong to the Klein subgroup of $G_1$, respectively. 

Also, we have $\bigcup_{B\in{\cal B}} B+J=\bigcup_{g\in G_1}\{g\}\times\Phi_g$
with $J=\{(0,0,0,0),(0,1,1,0)\}$ and $\Phi_g=U$ for every $g\in G_1$.

Reasoning as in Theorem \ref{9mod24}, we can see that $${\cal F}=\{\mu_s(B) \ | \ s\in S; B\in{\cal B}\}$$
is a $(G_1\times V_n,G_1\times V_1,3,1)$-RDF admitting $\{\mu_t \ | \ t\in T\}$ as
a group of strong multipliers of order the greatest odd divisor of $\psi(n)$.
\end{proof}

Analogously to Theorem \ref{9mod24} the set of {\it initial} base blocks ${\cal B}$ considered above 
is a lifting of a $J$-resolvable $(G_1,3,4)$-SDF.

\begin{thm}\label{15mod24bis}
If all the components of $n$ are congruent to $1$ mod $6$, then there exists a
$(G_1\times V_n,G_1\times V_1,3,1)$-RDF with a group of strong multipliers whose order 
is the greatest divisor of $\psi(n)$ coprime with $6$.
\end{thm}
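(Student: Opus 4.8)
The plan is to repeat, almost verbatim, the strategy of Theorems~\ref{9mod24} and~\ref{15mod24}, replacing the group of units of order $4$ by one of order $6$. First I would apply Lemma~\ref{semiregular} with $\lambda=6$: since every component of $n$ is $\equiv 1$ $($mod $6)$, this produces a unit $u\in\U(\F_n)$ of order $6$ such that $u^j-1$ is a unit for $1\le j\le 5$ and $U=\langle u\rangle$ acts semiregularly on $V_n^*$, together with a subgroup $T\le\U(\F_n)$ whose order is the greatest divisor of $\psi(n)$ coprime with $6$, and a complete system $S$ of representatives for the $U$-orbits on $V_n^*$ that is left invariant by $T$. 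Note that $u^3=-1$ (in each component $u^3$ is the unique element of order $2$), so $-1\in U$ and every coset $cU$ of $U$ in $V_n^*$ is symmetric; this is what allows the difference list of the initial blocks to decompose into $U$-cosets.

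Next I would exhibit an explicit set ${\cal B}$ of $12$ triples of $G_1\times V_n$, whose $V_n$-coordinates involve only $0,\pm 1,\pm u$ (and possibly $\pm u^2$), displayed in the same style as in Theorem~\ref{15mod24} and found by computer search. It is to be a lifting of a $J$-resolvable $(G_1,3,6)$-SDF with $J=\{(0,0,0,0),(0,1,1,0)\}$, that is, it must satisfy the two bookkeeping identities
$$\Delta{\cal B}=\bigcup_{g\in G_1}\{g\}\times\Delta_g,\qquad \bigcup_{B\in{\cal B}}B+J=\bigcup_{g\in G_1}\{g\}\times U,$$
where each $\Delta_g$ is a single coset of $U$ in $V_n^*$ (for instance $\Delta_g=2U$ when $g$ lies in the Klein subgroup of $G_1$ and $\Delta_g=(u-1)U$ otherwise; the precise cosets are immaterial, and the count $12\cdot 6=72=6\,|G_1|$ matches as it must). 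Checking these two identities is a finite verification over the $12$ triples; I expect this to be the main obstacle, since one must choose the $V_n$-coordinates so as to control $\Delta{\cal B}$ coordinate by coordinate in $G_1$ while simultaneously forcing $\Phi({\cal B})+J$ to meet each coset of $J$ the correct number of times — exactly the difficulty already present in Theorem~\ref{15mod24}, and the reason such initial blocks are produced by machine.

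Once ${\cal B}$ is in hand, the remainder is routine and identical to the earlier proofs. Set ${\cal F}=\{\mu_s(B)\mid s\in S,\ B\in{\cal B}\}$. By Proposition~\ref{US} we have $US=V_n^*$ with each element covered once, and since $2$, $u-1$ and the other coset representatives occurring are units, also $(cU)S=V_n^*$ with each element covered once for every such $c$. Combining this with the first identity gives $\Delta{\cal F}=(G_1\times V_n)\setminus(G_1\times V_1)$, and combining it with the second gives $\Phi({\cal F})+J=(G_1\times V_n)\setminus(G_1\times V_1)$; hence ${\cal F}$ is a $(G_1\times V_n,G_1\times V_1,3,1)$-RDF. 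Finally, since $T$ leaves $S$ invariant, the maps $\mu_t$ with $t\in T$ permute ${\cal F}$ and fix $G_1\times V_1$ pointwise, so they form a group of strong multipliers isomorphic to $T$, whose order is by construction the greatest divisor of $\psi(n)$ coprime with $6$. This completes the proof, and as in the earlier cases the initial block set ${\cal B}$ is a lifting of a $J$-resolvable $(G_1,3,6)$-SDF.
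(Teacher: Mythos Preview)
Your plan is exactly the paper's approach: apply Lemma~\ref{semiregular} with $\lambda=6$, lift a $J$-resolvable $(G_1,3,6)$-SDF to twelve initial triples ${\cal B}$ in $G_1\times V_n$, and spread by $\mu_s$ over a $T$-invariant transversal $S$. The only substantive gap is that you do not actually exhibit ${\cal B}$; the paper writes the twelve triples out explicitly and checks (using the identity $u^2-u+1=0$, which you should record since it is what makes the differences collapse into $U$-cosets) that in fact every $\Delta_g$ equals the \emph{same} coset $(u+1)U$, not two different cosets as in Theorem~\ref{15mod24}, with $u+1=-(u^4-1)$ a unit by the choice of $u$.
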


\begin{proof}
Take $u$, $T$ and $S$ as in the statement of Lemma \ref{semiregular} applied with $\lambda=6$.
Thus $u$ is a unit of $\U(\F_n)$ of order 6 such that $u^i-1$ is a unit 
for $1\leq i\leq 5$ and $U:=\langle u\rangle$ acts semiregularly on $V_n^*$.
Note that we necessarily have $u^3=-1$ so that $U=\{\pm1,\pm u, \pm u^2\}$, and the identity $u^2-u+1=0$ holds
\footnote{By definition, we have $u^6-1=0$, hence $(u^3-1)(u+1)(u^2-u+1)=0$. It cannot be $u^3-1=0$ or $u+1=0$ otherwise
$u$ would have order 3 or 2, respectively. It follows that $u^2-u+1=0$.}.
Consider the set ${\cal B}$ consisting of the following triples of $G_1\times V_n$:
$$\{(0, 0, 0, 1), (0, 0, 0, -u), (0, 0, 0, u^2)\};$$
$$\{(0, 0, 0, u), (0, 1, 0, -u^2), (1, 1,  0, -1)\};$$
$$\{(0,  0, 1, -u), (1, 0, 0, u^2), (2, 0, 1, 1)\};$$
$$\{(0,  0, 1, u^2), (1, 0, 1, 1), (1, 1, 1, -u)\};$$
$$\{(0,  0, 1, 1), (1, 1, 0, u^2), (2, 0,   0, -u)\};$$
$$\{(0, 1, 0, u), (0, 1, 1, -u^2), (2, 0, 1, -1)\};$$
$$\{(0, 1, 0, -1), (1, 1, 1, u), (2, 0,   0, -u^2)\};$$ 
$$\{(0, 1, 1, -1), (1, 0,  0, -u^2), (2, 0, 1, u)\};$$
$$\{(1, 0, 0, 1), (1, 0, 1, -u), (2, 0,   1, u^2)\};$$ 
$$\{(1, 0,  1, -u^2), (1, 1, 1, -1), (2, 1, 1, u)\};$$
$$\{(1, 0, 1, u), (2, 0,   1, -u^2), (2, 1, 1, -1)\};$$
$$\{(2, 0, 0, u^2), (2, 0, 1, -u), (2, 1, 1, 1)\}.$$

\medskip
With a little bit of patience, taking into account the identity $u^2=u-1$, it is not difficult to check that we have: 
\begin{equation}\label{DeltaPhi3}
\Delta{\cal B}=\bigcup_{g\in G_1}\{g\}\times(u+1)U \quad{\rm and}\quad
\bigcup_{B\in{\cal B}} B+J=\bigcup_{g\in G_1}\{g\}\times U
\end{equation}
where $J=\{(0,0,0,0),(0,1,1,0)\}$. We can write $u+1=-(u^4-1)$, hence $u+1$ is a unit of $\F_n$ by assumption on $u$.
Now set
$${\cal F}=\{\mu_s(B) \ | \ s\in S; B\in{\cal B}\}.$$ 
Given that $S$ is a complete system of representatives for the orbits of $U$ on $V_{4n+1}^*$,
we have $US=V_n^*$ by Proposition \ref{US} and then, taking into account (\ref{DeltaPhi3}), we easily obtain
$$\Delta{\cal F}=\Phi({\cal F})+J=(G_1\times V_n)\setminus(G_1\times V_1)$$
which means that ${\cal F}$ is a $(G_1\times V_n,G_1\times V_1,3,1)$-RDF.\\
Finally, given that $T$ is a subgroup of $\U(\F_n)$ which leaves $S$ invariant, we infer that $\{\mu_t \ | \ t\in T\}$ is a group of strong multipliers of $\cal F$. 
The assertion follows observing that this group is clearly isomorphic to $T$ whose order
is the greatest divisor of $\psi(n)$ coprime with 6.
\end{proof}

This time the set of {\it initial} base blocks ${\cal B}$ considered above
is a lifting of a $J$-resolvable $(G_1,3,6)$-SDF.

\section{A composition construction via pseudo-resolvable difference families}\label{prdf}

Now we define a class of $(G,\{2^3,3\},3,1)$-DFs that we call pseudo-resolvable and
will be crucial for the construction of a 3-pyramidal KTS$(v)$ with $v=36n+3$ or $v=48n+3$ or $v=108n+3$ with all the 
components of $n$ congruent to $7$ or 11 (mod 12).

\begin{defn}
Let $G$ be a pertinent group of doubly even order and let $\cal F$ be a $(G,\Sigma,3,1)$-DF with
$\Sigma=\{\{0,j_1\},\{0,j_2\},\{0,j_3\},\{0,x,-x\}\}$.  We say that $\cal F$ is {\it pseudo-resolvable} (PRDF for short) if $\Phi({\cal F}) \ \cup \ \{0,j_\alpha,x\}$
is a complete system of representatives for the left cosets of $\{0,j_\beta\}$ in $G$ for suitable involutions $j_\alpha$, $j_\beta$.
\end{defn}

Even though the definitions of resolvable and pseudo-resolvable difference families are very similar, they
are independent. For instance, in spite of the fact that there exists a $(G_1,\{2^3,3\},3,1)$-RDF (see Subsection \ref{15}),
there is no $(G_1,\{2^3,3\},3,1)$-PRDF.
Certainly, a $(G,\{2^3,3\},3,1)$-DF cannot be resolvable and pseudo-resolvable at the same time.
Here is a useful example in the groups $G_1\times V_3$.

\begin{ex}\label{36PRDF}
Let $\Sigma$ be the $\{2^3,3\}$-partial spread of $G_1\times V_3$ 
whose member of order $3$ is $\{0,x,-x\}$ with $x=(1,0,0,0)$.
One can check that the following five triples of $G_1\times V_3$ 
$$B_1=\{(0, 0, 1, 2), \ (0, 1, 1, 1), \ (1, 0, 0, 2)\},$$
$$B_2=\{(0, 1, 0, 1),\  (1, 0, 0, 1), \ (2, 1, 0, 0)\},$$
$$B_3=\{(0, 1, 1, 2), \ (2, 0, 0, 0), \ (2, 0, 0, 1)\},$$
$$B_4=\{(1, 0, 1, 1), \  (1, 1, 0, 0), \ (2, 1, 0, 2)\},$$
$$B_5=\{(1, 1, 0, 2), \ (2, 0, 0, 2), \ (2, 0, 1, 1)\},$$
form a $(G_1\times V_3,\Sigma,3,1)$-PRDF.
\end{ex}

The following result explains why pseudo-resolvable DFs can be helpful in the
construction of some resolvable DFs.

\begin{thm}\label{PRDF}
Assume that all the components of $n$ are congruent to $3$ mod $4$ but distinct from $3$,
and that there exists a $(G,\Sigma,3,1)$-PRDF with $G$ pertinent of doubly even order. Then there exists 
a $(G\times V_n,G\times V_1,3,1)$-RDF with a group of 
strong multipliers of order the greatest odd divisor of $\psi(n)$.
\end{thm}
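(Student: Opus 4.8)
The strategy mirrors the proofs of Theorems \ref{9mod24}, \ref{15mod24} and \ref{15mod24bis}: use the given PRDF as a set of ``seed'' blocks over the base group $G$, and spread it over $V_n$ by the action of the multiplier semigroup $\{\mu_s : s\in S\}$, where $u$, $T$, $S$ come from Lemma \ref{semiregular} applied with $\lambda=2$ (more precisely, we want the cyclic group $U=\langle u\rangle=\{\pm1\}$ together with the decomposition $US=V_n^*$, and $T$ the greatest odd divisor of $\psi(n)$; note that since all components of $n$ are $\equiv3$ (mod $4$), $-1$ is a non-square in each component, which is what makes the extra $\{0,a,b\}$-type cosets work out). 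First I would write $\Sigma=\{\{0,j_1\},\{0,j_2\},\{0,j_3\},\{0,x,-x\}\}$ and let $j_\alpha,j_\beta$ be the involutions given by the PRDF hypothesis, so that $\Phi(\FF)\cup\{0,j_\alpha,x\}$ is a complete system of representatives for the left cosets of $J:=\{0,j_\beta\}$ in $G$. The candidate family is
\[
{\cal G}=\{(B,s) \mapsto \mu_s(\iota(B)) : B\in\FF,\ s\in S\} \ \cup \ \{\text{one or two extra families built from }x,j_\alpha\},
\]
where $\iota\colon G\hookrightarrow G\times V_n$ is $g\mapsto(g,0)$ and $\mu_s(g,y)=(g,ys)$. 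The point is that the PRDF furnishes, via its flatten, exactly the ``missing'' resolvability data on the $G$-coordinate, and the $V_n$-coordinate is handled by the now-standard halving/semiregular-action machinery.

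\textbf{Key steps.} (1) Compute $\Delta\big(\bigcup_{s\in S}\mu_s(\iota(B))\big)$ over all $B\in\FF$: since $\FF$ is a $(G,\Sigma,3,1)$-DF, the differences coming from the $G$-coordinate cover $G\setminus(\{0,j_1,j_2,j_3\}\cup\{0,x,-x\})$ once, each paired with a $V_n$-component lying in a fixed coset of $U$; multiplying by $S$ and using $US=V_n^*$ (Proposition \ref{US}) upgrades this to cover $(G\times V_n^*)$ restricted to those $G$-values, and I must also recover the differences with $G$-coordinate in $\{j_1,j_2,j_3,x,-x\}$ but nonzero $V_n$-coordinate — these come partly from the spread-out seeds (the triples of $\FF$ that straddle those subgroups) and partly from a small number of additional base blocks involving $x$ and the involutions, chosen exactly as in the three model theorems; the target list of differences is $(G\times V_n)\setminus(G\times V_1)$. (2) Verify the resolvability identity $\Phi({\cal G})+J = (G\times V_n)\setminus(G\times V_1)$: the $V_n=0$ layer is governed by the PRDF condition (the flatten of $\FF$ together with $\{0,j_\alpha,x\}$ representing the $J$-cosets inside $G$), and the higher $V_n$-layers are covered because for each nonzero fiber value the $U$-orbit fills $V_n^*$ after applying $S$; here one must double-check that $J=\{0,j_\beta\}$ is one of the three order-$2$ subgroups of the pertinent group $G\times V_n$ and that conjugates $a+J-a$, $b+J-b$ give the other two (Definition \ref{JRDF}), which is immediate since $\{0,j_1,j_2,j_3\}$ are all pairwise conjugate in $G$ hence also in $G\times V_n$. (3) Conclude that $\{\mu_t : t\in T\}$ is a group of strong multipliers: $T$ fixes $G\times V_1$ pointwise by construction of $\mu_t$, and $T$ leaves $S$ invariant by Lemma \ref{semiregular}(3), so it permutes ${\cal G}$; the group is isomorphic to $T$, whose order is the greatest odd divisor of $\psi(n)$.

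\textbf{Main obstacle.} The genuine work — and the reason this is stated as a separate theorem rather than folded into the earlier ones — is bookkeeping the differences and the flatten once the seed family $\FF$ is not a single explicit SDF-lift but an arbitrary PRDF over an arbitrary pertinent doubly-even $G$. Concretely: the seed triples of $\FF$ do \emph{not} all have the convenient ``two entries with $V_n$-coordinate $0$ forming the order-$2$ part'' shape that the SDFs in Theorems \ref{9mod24}--\ref{15mod24bis} enjoyed, so I cannot simply read off $\Delta_g$ as $2U$ or $(u-1)U$ coset-by-coset. Instead one must argue abstractly that (i) the multiset of $V_n$-coordinate differences attached to each fixed $g\in G$ is a single coset of $U=\{\pm1\}$ — which forces working with $\lambda=2$ and hence uses that $\FF$ lives over $V_1$ in its $V_n$-coordinate, i.e. the seeds are genuinely elements of $G\cong G\times V_1$ — and (ii) the handful of extra base blocks needed to patch the $\{j_i\}$- and $\{\pm x\}$-layers can always be written down uniformly. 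I expect this patching, and verifying it simultaneously satisfies the difference condition \emph{and} the $+J$ condition, to be the delicate part; the restriction ``components $\equiv 3$ (mod $4$), distinct from $3$'' is presumably exactly what guarantees a unit of order $2$ with $-1$ a non-square and $\psi(n)$ having a nontrivial odd part to supply $T$, so those hypotheses should be invoked precisely at the points where $US=V_n^*$ and $-1\notin\F_{q_i}^\Box$ are used.
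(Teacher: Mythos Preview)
There is a genuine gap at the very first step. Your seed family is $\{\mu_s(\iota(B)):B\in\FF,\ s\in S\}$ with $\iota(g)=(g,0)$ and $\mu_s(g,y)=(g,ys)$. But then $\mu_s(\iota(B))=\iota(B)$ for every $s$, since $0\cdot s=0$: the ``spreading'' does nothing, and your main family collapses to $\{\iota(B):B\in\FF\}$, whose differences all have $V_n$-coordinate $0$. So neither the difference condition nor the coset condition can be met on the nonzero $V_n$-layers by this part of the construction; the unspecified ``one or two extra families'' would have to do essentially all the work, and you give no indication of what they are.

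The paper's proof fixes this by first lifting each $B=\{b_1,b_2,b_3\}\in\FF$ to $B^+=\{(b_1,u_1),(b_2,u_2),(b_3,u_3)\}$ for a \emph{fixed} triple $\{u_1,u_2,u_3\}$ of units of $\F_n$ whose pairwise differences are also units, and then taking ${\cal F}^+=\{\mu_z(B^+):B\in\FF,\ z\in V_n^*\}$ (all of $V_n^*$, not a halving). This gives $\Delta{\cal F}^+=(G\setminus H)\times V_n^*$ and $\Phi({\cal F}^+)+\{(0,0),(j_1,0)\}=(G\setminus L)\times V_n^*$ directly from the PRDF identities. The remaining differences, those in $H\times V_n^*$, are supplied by two explicit extra triples $A_1=\{(0,1),(x,y),(x,-y)\}$ and $A_2=\{(0,-1),(j_2,y),(j_3,-y)\}$ spread over a halving $S$, where $y$ is built componentwise from elements $\sigma_i\in\F_{q_i}^\Box\setminus\{1\}$ via $y_i=(\sigma_i+1)/(\sigma_i-1)$; the existence of such $\sigma_i$ is exactly why the component $3$ must be excluded. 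Lemma \ref{semiregular} is not invoked at all here: the multiplier group is simply $\{\mu_s:s\in\F_{q_1}^\Box\times\dots\times\F_{q_\omega}^\Box\}$, whose order is $\prod_i(q_i-1)/2$, and this equals the greatest odd divisor of $\psi(n)$ precisely because each $q_i\equiv3\pmod4$.
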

\begin{proof}
The fact that $G$ is pertinent of doubly even order implies that its three involutions $j_1$, $j_2$, $j_3$, together with zero,
is the Klein four-group (see Lemma \ref{klein}). Thus we have $j_\alpha+j_\beta=j_\gamma$ for every permutation 
$(\alpha,\beta,\gamma)$ of $(1,2,3)$.

Let $\cal F$ be a $(G,\Sigma,3,1)$-PRDF and let $H$ be the union of the
members of $\Sigma$. Thus $H=\{0,j_1,j_2,j_3,x,-x\}$ for a suitable element $x$ of order 3. By definition, up to a reordering of $\{j_1,j_2,j_3\}$, we have
\begin{equation}\label{PRDFB}
\Delta{\cal F}=G\setminus H\quad\quad\mbox{and}\quad\quad \Phi({\cal F})+\{0,j_1\}=G\setminus L
\end{equation}
where $L=\{0,j_1,j_2,j_3,x,x+j_1\}$.

Let $n=q_1\dots q_\omega$ be the prime power factorization of $n$. By assumption we have $q_i\equiv3$ (mod 4), hence $-1\in\F_{q_i}^{\not\Box}$ for $1\leq i\leq \omega$.
Take any element $\sigma_i$ of $\F_{q_i}^\Box\setminus\{1\}$ and set $y_i={\sigma_i+1\over \sigma_i-1}$. Set $y=(y_1,\dots,y_\omega)$ and consider the
following  two triples of $G\times\F_n$:
$$A_1=\{(0,1), \ (x, y), \ (x,-y)\},\quad A_2=\{(0,-1), \ (j_2,y), \ (j_3,-y)\}.$$
Note that we have $\displaystyle\Delta\{A_1,A_2\}=\bigcup_{h\in H}\{h\}\times \Delta_h$ with 
$$\Delta_{0}=\Delta_{j_1}=\{2y,-2y\};$$
$$\Delta_{j_2}=\{y+1,-(y+1)\};\quad \Delta_{j_3}=\{y-1,-(y-1)\};$$
$$\Delta_x=\{y-1,-(y+1)\};\quad \Delta_{-x}=\{y+1,-(y-1)\}.$$

We have ${y_i+1\over y_i-1}=\sigma_i\in\F_{q_i}^\Box$,
thus $y_i-1$ and $y_i+1$ are both squares or both non-squares of $\F_{q_i}$. 
Also, we have $q_i\equiv3$ (mod 4) for each $i$ so that $-1\in \F_{q_i}^{\not\Box}$. 
On the basis of these facts it is evident that the projection of each $\Delta_h$ on $\F_{q_i}$ consists of a 
non-zero square and a non-square of $\F_{q_i}$. As a consequence, if $S$ is any halving of $\F_n$, we have $S\Delta_h=\F_n^*$ for every $h\in H$. Thus,  
the set of triples ${\cal A}=:\{\mu_s(A_i) \ | \ i=1,2; s\in S\}$ has list of differences $\Delta{\cal A}=H\times V_n^*$, i.e.,
\begin{equation}\label{DeltaA}
\Delta{\cal A}=(H\times V_n)\setminus(H\times V_1)
\end{equation}
Now note that we have 
$$(A_1 \ \cup \ A_2)+\{(0,0),(j_1,0)\}=\bigcup_{\ell\in L}\{\ell\}\times\{\phi_\ell,-\phi_\ell\}$$
with $\phi_0=\phi_{j_1}=1$, and $\phi_\ell=y$ for $\ell\in L\setminus\{0,j_1\}$. We clearly have
$\{\phi_\ell,-\phi_\ell\}\cdot S=V_n^*$ for each $\ell\in L$ and then
\begin{equation}\label{cosets1}
\Phi({\cal A})+\{(0,0),(j_1,0)\}=(L\times V_n)\setminus(L\times V_1).
\end{equation}
Take a triple $\{u_1,u_2,u_3\}$ of units of $\F_n$ with the property that the elements of its list of differences 
$\Delta\{u_1,u_2,u_3\}$ are also units. For instance, one could take $\{u_1,u_2,u_3\}=\{1,-1,y\}$.
Now lift each triple $B=\{b_1,b_2,b_3\}\in{\cal F}$ to the triple $B^+=\{(b_1,u_1),(b_2,u_2),(b_3,u_3)\}$
and set $${\cal F}^+=\{\mu_z(B^+) \ | \ B\in{\cal F}; z\in V_n^*\}.$$ 
We note that the contribution of a single $B\in{\cal F}$ to $\Delta{\cal F}^+$ and $\Phi({\cal F}^+)$
is $(\Delta B)\times V_n^*$ and $B\times V_n^*$, respectively. Thus that the two equalities in (\ref{PRDFB}) imply that we have:
\begin{equation}\label{DeltaF+}
\Delta{\cal F}^+=(G\setminus H)\times(V_n\setminus V_1);
\end{equation}
\begin{equation}\label{cosets2}
\Phi({\cal F}^+) + \{(0,0),(j_1,0)\}=(G\setminus L)\times(V_n\setminus V_1).
\end{equation}
It is clear that ${\cal A} \ \cup \ {\cal F}^+$ is a $(G\times V_n,G\times V_1,3,1)$-RDF.  
Indeed (\ref{DeltaA}) and (\ref{DeltaF+}) imply that $\Delta({\cal A} \ \cup \ {\cal F}^+)=(G\times V_n)\setminus(G\times V_1)$.
Also, (\ref{cosets1}) and (\ref{cosets2}) imply that 
$\Phi({\cal A} \ \cup \ {\cal F}^+) + \{(0,0),(j_1,0)\}=(G\times V_n)\setminus(G\times V_1)$.

Letting $q_1, \dots, q_\omega$ be the components of $n$, we finally note that 
$$\{\mu_s \ | \ s\in \F_{q_1}^\Box \times \dots \times \F_{q_\omega}^\Box\}$$ 
is a group of strong multipliers of ${\cal F}^+$
and its order is the greatest odd divisor of $\psi(n)$ since each $q_i\equiv3$ (mod 4). The assertion follows.
\end{proof}

Letting $G=G_1\times V_3$ and applying Theorem \ref{PRDF} to the $(G,\Sigma,3,1)$-PRDF given
in Example \ref{36PRDF}, we obtain the following important result.

\begin{cor}\label{36P}
If all the components of $n$ are greater than $3$ and congruent to $3$ modulo $4$, then there exists a $(G_1\times V_{3n},G_1\times V_3,3,1)$-RDF
with a group of strong multipliers of order the greatest odd divisor of $\psi(n)$.
\end{cor}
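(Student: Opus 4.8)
The plan is to obtain the statement as an immediate specialization of Theorem~\ref{PRDF}, applied with $G=G_1\times V_3$. The first thing I would verify is that this $G$ satisfies the structural hypotheses of that theorem. Since $G_1\cong\A_4$ is pertinent of order $12$ and $V_3$ has odd order $3$, the direct product $G=G_1\times V_3$ is itself pertinent --- this is exactly the observation closing the proof of the ``if part'' of Theorem~\ref{PertinentOrders} --- and $|G|=36=4\cdot 9$, so $G$ has doubly even order. Next I would note that the arithmetic hypothesis on $n$ transfers unchanged: if every component of $n$ is greater than $3$ and congruent to $3$ modulo $4$, then in particular every component of $n$ is congruent to $3$ modulo $4$ and distinct from $3$, which is precisely what Theorem~\ref{PRDF} demands.

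With these checks in place I would feed in Example~\ref{36PRDF}, which supplies an explicit $(G,\Sigma,3,1)$-PRDF, namely $\{B_1,\dots,B_5\}$, where $\Sigma$ is the $\{2^3,3\}$-partial spread of $G$ whose subgroup of order $3$ is generated by $(1,0,0,0)$; in a complete write-up one would recall (or simply re-run) the difference-table verification behind that example, but here it may be taken as given. Theorem~\ref{PRDF} then yields a $(G\times V_n,\,G\times V_1,\,3,1)$-RDF admitting a group of strong multipliers of order the greatest odd divisor of $\psi(n)$.

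It remains only to present the ambient group in the form stated. Since the additive group $V_m$ splits as the direct sum of the $V_q$ over the components $q$ of $m$ (and $V_{3^{a+1}}\cong V_3\times V_{3^a}$), we have $V_3\times V_n\cong V_{3n}$, and hence
\[
G\times V_n=(G_1\times V_3)\times V_n=G_1\times(V_3\times V_n)\cong G_1\times V_{3n},
\qquad
G\times V_1=G_1\times V_3 .
\]
Transporting the RDF produced above through this identification gives the desired $(G_1\times V_{3n},\,G_1\times V_3,\,3,1)$-RDF, with a group of strong multipliers of the claimed order. I do not expect any genuine obstacle: all the substance is already contained in Theorem~\ref{PRDF} and in the construction of the PRDF of Example~\ref{36PRDF}. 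The only point requiring a moment's attention is bookkeeping about the resolving involution --- the convention of Section~\ref{notation} asks the order-$2$ subgroup witnessing resolvability to be generated by the canonical involution of $G_1\times V_{3n}$ --- which is harmless since $G_1$ acts transitively by conjugation on its three involutions; beyond that, one need only confirm that $G_1\times V_3$ is pertinent of doubly even order, that the hypothesis on $n$ carries over verbatim, and that $V_3\times V_n$ may be identified with $V_{3n}$.
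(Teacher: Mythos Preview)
Your proof is correct and follows exactly the paper's approach: set $G=G_1\times V_3$, invoke the PRDF of Example~\ref{36PRDF}, and apply Theorem~\ref{PRDF}. You simply make explicit the verifications (pertinence, doubly even order, the identification $V_3\times V_n\cong V_{3n}$, and the canonical-involution bookkeeping) that the paper leaves implicit in its one-line derivation.
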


In Appendix \ref{G1xV9,3,1PRDF} and \ref{G2,3,1PRDF}  we will give an example of a $(G,\{2^3,3\},3,1)$-PRDF with both $G=G_1\times V_9$ and $G=G_2$. 
Thus, as another important application of Theorem \ref{PRDF} we get the following.

\begin{cor}\label{12*9P}
If all the components of $n$ are greater than $3$ and congruent to $3$ modulo $4$, then there exists a $(G_1\times V_{9n},G_1\times V_9,3,1)$-RDF
with a group of strong multipliers of order the greatest odd divisor of $\psi(n)$.
\end{cor}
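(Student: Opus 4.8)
\textbf{Proof plan for Corollary \ref{12*9P}.}

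The plan is to mimic exactly the proof of Corollary \ref{36P}, replacing the group $G_1\times V_3$ with $G_1\times V_9$. The key observation is that Theorem \ref{PRDF} is a black box: given a pertinent group $G$ of doubly even order together with a $(G,\Sigma,3,1)$-PRDF, it produces a $(G\times V_n,G\times V_1,3,1)$-RDF with the desired group of strong multipliers whenever all the components of $n$ are congruent to $3$ mod $4$ and distinct from $3$. So the only thing that needs to be supplied is a suitable PRDF in the group $G:=G_1\times V_9$.

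First I would verify that $G_1\times V_9$ satisfies the hypotheses of Theorem \ref{PRDF}: it is pertinent (being the direct product of the pertinent group $G_1$ with the group $V_9$ of odd order, as observed right after Theorem \ref{PertinentOrders}), and its order is $12\cdot 9 = 108$, which is doubly even. Next I would invoke the explicit $(G_1\times V_9,\{2^3,3\},3,1)$-PRDF constructed in Appendix \ref{G1xV9,3,1PRDF}; checking that the listed triples indeed form a PRDF is a finite computation of exactly the type carried out for Example \ref{36PRDF} (compute the list of differences and confirm it equals $G\setminus H$, then confirm the flatten together with $\{0,j_\alpha,x\}$ is a transversal for the cosets of $\{0,j_\beta\}$), so I would simply cite the appendix rather than reproduce it here.

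With these two ingredients in hand, the corollary is immediate: applying Theorem \ref{PRDF} with $G=G_1\times V_9$ to this PRDF yields, for every $n$ all of whose components are greater than $3$ and congruent to $3$ modulo $4$, a $(G_1\times V_9\times V_n, G_1\times V_9\times V_1, 3, 1)$-RDF with a group of strong multipliers of order the greatest odd divisor of $\psi(n)$. Finally I would identify $V_9\times V_n$ with $V_{9n}$ and $V_9\times V_1$ with $V_9$ — legitimate because $9$ and $n$ are coprime (all components of $n$ being odd and, in fact, $\neq 3$), so the ring $\F_9\times\F_n$ is exactly $\F_{9n}$ in the notation of Section \ref{notation}. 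This gives the claimed $(G_1\times V_{9n},G_1\times V_9,3,1)$-RDF and completes the proof.

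There is essentially no obstacle beyond the bookkeeping: the real content is hidden in the appendix (exhibiting the PRDF) and in Theorem \ref{PRDF} itself, both of which we are entitled to use. The one point requiring a moment's care is the identification $\F_9\times\F_n\cong\F_{9n}$, which relies on $n$ having no component equal to $3$; this is guaranteed by the hypothesis "greater than $3$ and congruent to $3$ modulo $4$", and is precisely why that hypothesis (rather than merely "congruent to $3$ mod $4$") appears in the statement.
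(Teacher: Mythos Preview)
Your proposal is correct and matches the paper's approach exactly: the paper derives Corollary \ref{12*9P} immediately from Theorem \ref{PRDF} applied with $G=G_1\times V_9$, citing the explicit PRDF exhibited in Appendix \ref{G1xV9,3,1PRDF}. (One nitpick: your ring identification $\F_9\times\F_n=\F_{9n}$ can fail if $n$ has, say, $27$ as a component---which is $>3$ and $\equiv 3\pmod 4$---but only the additive-group identification $V_9\times V_n\cong V_{9n}$ is needed, and that always holds.)
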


\begin{cor}\label{48P}
If all the components of $n$ are greater than $3$ and congruent to $3$ modulo $4$, then there exists a $(G_2\times V_{n},G_2\times V_1,3,1)$-RDF
with a group of strong multipliers of order  the greatest odd divisor of $\psi(n)$.
\end{cor}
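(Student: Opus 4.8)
The plan is to deduce this immediately from Theorem \ref{PRDF}, exactly as Corollaries \ref{36P} and \ref{12*9P} were deduced from it. For that I need just two ingredients: that $G_2$ is a pertinent group of doubly even order, and an explicit $(G_2,\{2^3,3\},3,1)$-PRDF; everything else is the black box provided by Theorem \ref{PRDF}.

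The first ingredient is essentially free. The group $G_2$ is the group $G_\alpha$ with $\alpha=2$ constructed in the ``if part'' of Theorem \ref{PertinentOrders}, so it is pertinent of order $4^2\cdot 3=48$; since $4\mid 48$, this order is doubly even. Moreover, $4$ divides $|G_2|$, so by Lemma \ref{klein} the three involutions of $G_2$ together with $0$ form a Klein four-group, which is precisely the structural property used in the proof of Theorem \ref{PRDF}. So no genuine work is needed here beyond quoting these earlier results.

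The second ingredient is where the actual effort lies. I would produce, by a computer search over $G_2$, a list of triples and then verify by a finite direct computation that (a) its list of differences equals $G_2\setminus H$, where $H=\{0,j_1,j_2,j_3,x,-x\}$ for a suitably chosen element $x$ of order $3$, and (b) the flatten of the family together with $\{0,j_\alpha,x\}$ forms a complete system of representatives for the left cosets of $\{0,j_\beta\}$ in $G_2$, for an appropriate choice of involutions $j_\alpha,j_\beta$. This is the $(G_2,\{2^3,3\},3,1)$-PRDF, which I would record in Appendix \ref{G2,3,1PRDF}. I expect this to be the main obstacle: as remarked right after the definition of a PRDF, these objects do not exist in every pertinent group of doubly even order (e.g.\ there is no $(G_1,\{2^3,3\},3,1)$-PRDF), so one cannot invoke any general existence principle and must exhibit the triples explicitly and check the two combinatorial conditions.

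Once the PRDF is in hand, the corollary is immediate. The hypothesis ``all components of $n$ are greater than $3$ and congruent to $3$ modulo $4$'' coincides with the hypothesis ``all components of $n$ are congruent to $3$ mod $4$ but distinct from $3$'' of Theorem \ref{PRDF}, since a prime power congruent to $3\pmod 4$ is either $3$ itself or greater than $3$. Applying Theorem \ref{PRDF} with $G=G_2$ then yields a $(G_2\times V_n,G_2\times V_1,3,1)$-RDF admitting a group of strong multipliers of order the greatest odd divisor of $\psi(n)$, which is exactly the assertion.
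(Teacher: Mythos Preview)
Your proposal is correct and matches the paper's approach exactly: the paper states just before the corollary that Appendix \ref{G2,3,1PRDF} provides a $(G_2,\{2^3,3\},3,1)$-PRDF, and then obtains Corollary \ref{48P} as an immediate application of Theorem \ref{PRDF} with $G=G_2$. Your remarks about why $G_2$ is pertinent of doubly even order and why the hypotheses line up are all accurate.
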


\section{Doubly disjoint difference families}

A $(G,H,3,1)$-DF is said to be {\it disjoint} if its blocks are pairwise disjoint and do not meet the subgroup $H$.
It is well known that there exists a disjoint $(\Z_{6n+1},3,1)$-DF and a disjoint $(\Z_{6n+3},\{0,2n+1,4n+2\},3,1)$-DF 
for every positive integer $n$ (see \cite{BuGhi, DR,DS}).

Let us say that two difference families are {\it strongly equivalent} if each block of the first is a suitable translate of a block of 
the second\footnote{We cannot simply say that they are {\it equivalent} since two difference families in a group $G$,
say ${\cal F}=\{B_1,\dots,B_n\}$ and ${\cal F}'=\{B'_1,\dots,B'_n\}$, are usually said to be equivalent if, up to the order, 
we have $B'_i=\alpha(B_i)+t_i$ for a suitable $\alpha\in Aut(G)$ and suitable $t_1, \dots, t_n\in G$.}. 

\begin{defn}
A $(G,H,3,1)$-DF is {\it doubly disjoint} if its blocks tile $G\setminus H$ together with the blocks of another
DF strongly equivalent to it.
\end{defn}

Note how the previous definition reminds a bit of the notion introduced in \cite{CKZ} of a $(v,k,\lambda)$ 
tiling of a group $G$, that is a set of mutually disjoint $(v,k,\lambda)$ difference sets in $G$ partitioning $G\setminus\{0\}$.
Needless to say, however, that the members of a $(v,k,\lambda)$ tiling of $G$ are pairwise strongly inequivalent.

Note that any $\{0,j\}$-resolvable $(G,H,3,1)$-DF, say ${\cal F}=\{B_1,\dots,B_n\}$, is doubly disjoint. Indeed, 
setting $B'_i=B_i+j$ for $i=1,\dots, n$, it is clear that ${\cal F}$ and ${\cal F}':=\{B'_1,\dots,B'_n\}$ are  strongly equivalent.
Also, by definition, we have $\Phi({\cal F})+\{0,j\}=G\setminus H$ and this is equivalent to say that the blocks of ${\cal F}$
and ${\cal F}'$ form a partition of $G\setminus H$.

In the next section we will give a composition construction for RDFs where doubly disjoint difference families play a crucial 
role. For this reason, it is worth studying their possible existence. In particular, we are interested in 
$(\Z_3\times V_{2n+1},\Z_3\times V_1,3,1)$-DFs. We are going to prove their existence in the case that all the components 
of $2n+1$ are congruent to 1 (mod 4).

\begin{thm}\label{DDDF}
If the components of $n$ are all congruent to $1$ $($mod $4)$, then there exists a doubly disjoint $(\Z_3\times V_n,\Z_3\times V_1,3,1)$-DF.
\end{thm}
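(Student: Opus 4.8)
The goal is to construct a doubly disjoint $(\Z_3\times V_n,\Z_3\times V_1,3,1)$-DF when every component of $n$ is $\equiv1$ (mod 4). The natural strategy, in the spirit of Theorems \ref{9mod24}--\ref{15mod24bis} and \ref{PRDF}, is to build a small ``initial'' family of base blocks in $\Z_3\times\F_n$ and then close it up under the action of the units. Specifically, since each component $q_i\equiv1$ (mod 4), we have $-1\in\F_{q_i}^\Box$; I would fix a halving $S$ of $\F_n^*$ (as in Section \ref{notation}, satisfying \eqref{halving}), and also use the multiplier group $\{\mu_s : s\in\F_{q_1}^\Box\times\dots\times\F_{q_\omega}^\Box\}$ whenever helpful. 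Recall from the discussion just before the statement that a $\{0,j\}$-resolvable DF is automatically doubly disjoint, but here the subgroup $H=\Z_3\times V_1$ has odd order, so there is \emph{no} involution $j$ available; hence I must verify the doubly-disjoint condition directly, i.e., exhibit explicitly a second family strongly equivalent to $\cal F$ whose blocks, together with those of $\cal F$, tile $(\Z_3\times V_n)\setminus(\Z_3\times V_1)$.

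First I would set up the combinatorial skeleton: a short list $\cal B$ of triples in $\Z_3\times\F_n$ whose differences, when hit by the halving $S$, cover $(\Z_3\times V_n)\setminus(\Z_3\times V_1)$ exactly once. As in the earlier theorems, this amounts to producing a ``$\Z_3$-pattern'' SDF-like object: writing $\Delta{\cal B}=\bigcup_{a\in\Z_3}\{a\}\times\Delta_a$, I need each $\Delta_a$ to be a union of cosets of $\{\pm1\}$ whose product with $S$ fills $V_n^*$ — by \eqref{halving} it suffices that the two entries of each such difference lie in opposite square-classes in every component. A convenient way to guarantee this is to build the differences out of $1$, $-1$, and an element $y$ with $y-1$, $y+1$ in the same square class componentwise (as in the proof of Theorem \ref{PRDF}, take $y_i=(\sigma_i+1)/(\sigma_i-1)$ with $\sigma_i\in\F_{q_i}^\Box\setminus\{1\}$); then $y\pm1$ are each a square times a nonsquare against $1$ because $-1$ is a square here — so I may need a slightly different choice of $y$ adapted to the $q_i\equiv1\pmod4$ case, perhaps picking $\sigma_i$ a nonsquare instead. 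The first nontrivial step, then, is to find the right handful of base triples (I expect $3$ or $4$ per $\Z_3$-fibre behaviour, analogous to the $4$ blocks in Theorem \ref{9mod24}), carry out the difference bookkeeping, and confirm coverage via Proposition \ref{US}/\eqref{halving}.

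The genuinely new ingredient — and the main obstacle — is the \emph{doubly disjoint} requirement, which has no analogue in the earlier proofs. I need a second family ${\cal F}'$, strongly equivalent to ${\cal F}$ (each block a translate of a block of ${\cal F}$), such that $\Phi({\cal F})\ \cup\ \Phi({\cal F}')=(\Z_3\times V_n)\setminus(\Z_3\times V_1)$ as a set. The plan is to build this into the initial data: choose the base triples $B=\{(a_1,t_1),(a_2,t_2),(a_3,t_3)\}$ together with a fixed translation vector $w=(w_0,w_1)\in\Z_3\times\F_n$ so that, fibre by fibre over $\Z_3$, the $S$-orbits of the flattens of $B$ and of $B+w$ partition $V_n^*$ in each of the three $\Z_3$-slots. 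Concretely, I would demand that $\bigcup_{B\in{\cal B}}\big(B\ \cup\ (B+w)\big)=\bigcup_{a\in\Z_3}\{a\}\times\Psi_a$ where each $\Psi_a$ is a union of $\{\pm1\}$-cosets with $\Psi_a\cdot S=V_n^*$; then defining ${\cal F}=\{\mu_s(B):s\in S,B\in{\cal B}\}$ and ${\cal F}'=\{\mu_s(B+w):\dots\}={\cal F}+w'$ for the appropriate scaled translate, strong equivalence and the tiling property both follow from \eqref{halving} and Proposition \ref{US}. I expect the delicate point to be the simultaneous satisfaction of three constraints — (a) $\Delta{\cal B}$ gives each nonzero element of $\Z_3\times V_n$ once after scaling, (b) the flatten-plus-translate condition (b) tiles, and (c) the translate $w$ is chosen uniformly so that $B+w$ is genuinely a single translation applied blockwise — and I would resolve it by an explicit choice of the few base blocks, checking (a)--(c) by a finite computation laid out in a difference table exactly as in Subsections \ref{33}, \ref{39} and the proof of Theorem \ref{9mod24}. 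Finally, I would remark that the construction, being defined via $\mu_s$ over the square-classes, inherits a group of strong multipliers of order the greatest odd divisor of $\psi(n)$, although this is not needed for the statement.
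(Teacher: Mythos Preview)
Your high-level plan --- pick a few base triples, develop under $\{\mu_s : s\in S\}$ for a halving $S$, and verify differences via \eqref{halving} --- is sound, but the execution runs into a counting obstruction that forces a different mechanism for the doubly-disjoint part than the one you sketch.

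First, count: the target DF has $(n-1)/2$ blocks, and a halving $S$ already has size $(n-1)/2$, so the initial family ${\cal B}$ you feed into $\{\mu_s:s\in S\}$ would have to be a \emph{single} triple, not ``3 or 4''. But a single triple cannot work. If its three $\Z_3$-coordinates are pairwise distinct, the fibre of $\Delta B$ over $0\in\Z_3$ is empty. Otherwise that fibre is of the form $\{d,-d\}$, and since every $q_i\equiv1\pmod4$ we have $-1\in\F_{q_i}^\Box$, so $d$ and $-d$ lie in the \emph{same} square class componentwise; hence $\{d,-d\}\cdot S$ misses half of $V_n^*$ and \eqref{halving} does not apply. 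So your scheme of ``build $\cal F$ directly as a DF, then translate by a fixed $w$'' cannot get off the ground.

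The paper sidesteps this by deliberately overshooting: it takes \emph{two} base triples $A,B$, so that ${\cal F}=\{\mu_s(A),\mu_s(B):s\in S\}$ has $\Delta{\cal F}$ covering $(\Z_3\times V_n)\setminus(\Z_3\times V_1)$ exactly \emph{twice} while $\Phi({\cal F})$ covers it exactly once. The key observation you are missing is that, precisely because $-1$ is a square in each component, the halving is \emph{symmetric}: $-S=S$, so $S=T\cup(-T)$. The two base triples are chosen so that $\mu_{-1}(A)$ and $\mu_{-1}(B)$ are translates of $A$ and $B$ respectively (concretely $A=\{(0,1),(1,x),(1,2-x)\}$ with $x$ a componentwise nonsquare such that $x-2$ is a square, and $B=xA$); consequently $A_{-t}$ and $B_{-t}$ are translates of $A_t$ and $B_t$ for every $t\in T$. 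Splitting ${\cal F}$ along $T$ versus $-T$ therefore yields two strongly equivalent families ${\cal F}^\pm$ with identical difference lists; since together they cover twice, each covers once and is a DF, and $\Phi({\cal F}^+)\cup\Phi({\cal F}^-)=\Phi({\cal F})$ is the required partition. The doubly-disjoint property thus emerges from \emph{halving} ${\cal F}$, not from translating it by a fixed $w$; and the translate witnessing strong equivalence necessarily varies from block to block (it depends on $t$), contrary to your constraint (c).
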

\begin{proof}
First observe that if $q\equiv1$ (mod 4) is a prime power, then the set $X=\{x\in \F_q^{\not\Box}  : x-2\in \F_q^\Box\}$ is not empty. 
For instance, using the {\it cyclotomic numbers} of order 2 (see, e.g., \cite{D}) one can see that $X$ has size ${q-1\over4}$.
More elementarily and constructively, an element of $X$ can be found as follows. Take any $y$ of $\F_q^{\not\Box}\setminus\{2\}$ 
and check that the set $$X'=\{y, \ y+1, \ 1-y, \ 4-2y, \ {2\over y+1}, \ {2y\over y-1}\}$$ 
has at least one element in common with $X$.

Thus, if $n=q_1\dots q_\omega$ with $q_i\equiv1$ (mod 4) is a prime power for $1\leq i\leq \omega$, we can construct
an element $x=(x_1,\dots,x_\omega)\in\F_n$ with the property that $x_i\in \F_{q_i}^{\not\Box}$  and $x_i-2\in \F_{q_i}^\Box$ for $1\leq i\leq \omega$.
Consider the 3-subsets $A$ and $B$ of $\Z_3\times V_n$ defined as follows:
$$A=\{(0,1),(1,x),(1,2-x)\};\quad\quad B=\{(0,x),(2,x^2),(2,2x-x^2)\}.$$
We have $$\Delta A \ \cup \ \Delta B=\bigcup_{h=0}^2\{h\}\times \{1,-1\}\cdot\Delta_h$$ 
with $\Delta_0=\{2(x-1),2x(x-1)\}$ and $\Delta_1=\Delta_2={1\over2}\Delta_0$. 
Also, we have $$A \ \cup \ B=\bigcup_{h=0}^2\{h\}\times\Phi_h$$ with $\Phi_0=\{1,x\}$, $\Phi_1=\{x,2-x\}$, and $\Phi_2=\{x^2,x(2-x)\}$.

Now take a halving $S$ of $\F_n^*$ (see Section \ref{notation}).
In view of the choice of the element $x$, we see that the projection of each $\Delta_h$ and each $\Phi_h$ on $\F_{q_i}$ consists of a square and a non-square.
Thus, by (\ref{halving}), we have $\Delta_hS=\Phi_hS=\F_n^*$ for $h=0,1,2$. 

For each $s\in S$ set $A_s=\mu_s(A)$, $B_s=\mu_s(B)$, and consider the family ${\cal F}=\{A_s, B_s \ | \ s\in S\}$. 
We obviously have $\Delta A_s=\mu_s(\Delta A)$ and $\Delta B_s=\mu_s(\Delta B)$. Thus we have 
$$\Delta{\cal F}=\bigcup_{h=0}^2\{h\}\times(\{1,-1\}\cdot\Delta_h\cdot S)=\bigcup_{h=0}^2\{h\}\times(\{1,-1\}\cdot V_n^*)$$
that is two times $(\Z_3\times V_n)\setminus(\Z_3\times V_1)$. 

We also have:
$$\Phi({\cal F})=\bigcup_{h=0}^2\{h\}\times(\Phi_h\cdot S)=\bigcup_{h=0}^2\{h\}\times V_n^*=(\Z_3\times V_n)\setminus(\Z_3\times V_1).$$

Note that the chosen halving $S$ is ${\it symmetric}$, i.e., we have $-S=S$. Then there exists a subset $T$ of $S$ for which
we have $S = T \ \cup \ (-T)$ so that ${\cal F}$ is splittable in the two families
$${\cal F}^+=\{A_t, B_t \ | \ t\in T\},\quad {\cal F}^-=\{A_{-t}, B_{-t} \ | \ t\in T\}.$$ 
Now note that $A_{-t}$ is a translate of $A_t$ and that $B_{-t}$ is a translate of $B_t$ for every $t\in T$. Indeed it is readily seen that we have:
$$A_t+(0,-2t)=A_{-t};\quad\quad B_t+(0,-2xt)=B_{-t}.$$
We deduce, in particular, that $\Delta A_t=\Delta A_{-t}$ and $\Delta B_t=\Delta B_{-t}$ for every $t\in T$. 
It follows that $\Delta{\cal F}^+=\Delta{\cal F}^-$. Thus, considering that $\Delta{\cal F}$ is twice
$\Z_3\times V_n^*$, we necessarily have $\Delta{\cal F}^+=\Delta{\cal F}^-=\Z_3\times V_n^*$.
This means that both ${\cal F}^+$ and ${\cal F}^-$ are $(\Z_3\times V_n,\Z_3\times V_1,3,1)$-DFs.

Considering that each block of ${\cal F}^-$ is a translate of a block of ${\cal F}^+$ and that 
$\Phi({\cal F}^+ \ \cup \ {\cal F}^-)=\Phi({\cal F})=(\Z_3\times V_n)\setminus(\Z_3\times V_1)$,
we conclude that ${\cal F}^+$ is doubly disjoint and the assertion follows.
\end{proof}

\section{Composition constructions via difference matrices}

We recall that a $(h,k,1)$ {\it difference matrix} in a group $H$ of order $h$, briefly denoted by $(H,k,1)$-DM,
is a $k\times h$ matrix in which the difference of any two distinct rows is a permutation of $H$. 

In particular, an $(H,3,1)$-DM is completely equivalent to a {\it complete mapping} of $H$. There is a large
literature on complete mappings starting with the famous conjecture of Hall and Paige \cite{HP} according to which 
a group $H$ of even order admits a complete mapping if and only if it is {\it admissible}, i.e., 
if and only if its 2-Sylow subgroups are not cyclic. The conjecture has been finally proved in \cite{Evans}.

\begin{defn} A $(H,k,1)$ difference matrix is {\it homogeneous} if each row is also a permutation of $H$. \end{defn}

It is quite evident that there exists a homogeneous $(H,k,1)$-DM if and only if there exists
an $(H,k+1,1)$-DM. Indeed, adding a null-row to a homogeneous $(H,k,1)$-DM one gets an $(H,k+1,1)$-DM.
Conversely, if $M$ is a $(H,k+1,1)$-DM with rows $M_1$, \dots , $M_{k+1}$, then one gets a homogeneous $(H,k,1)$-DM 
whose rows are $M_2-M_1$, \dots, $M_{k+1}-M_1$.

Thus, as immediate consequence of the characterization of all abelian groups $H$ for which there exists 
an $(H,4,1)$-DM (see \cite{Ge,PanChang}), we have the following. 

\begin{thm}\label{HDM}
There exists a homogeneous $(H,3,1)$-DM with $H$ abelian of odd order if and only if $|H|>3$.
There exists a homogeneous $(H,3,1)$-DM with $H$ abelian of even order if and only if 
the $2$-Sylow subgroups of $H$ are not cyclic.
\end{thm}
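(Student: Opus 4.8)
The plan is to deduce the statement from the known classification of abelian groups that admit an $(H,4,1)$-difference matrix, via the equivalence recorded just above: a homogeneous $(H,3,1)$-DM exists if and only if an $(H,4,1)$-DM exists (border a homogeneous $(H,3,1)$-DM with a null row to get an $(H,4,1)$-DM; subtract the first row from the remaining rows of an $(H,4,1)$-DM to get a homogeneous $(H,3,1)$-DM). So the entire content reduces to the question: for which abelian $H$ does an $(H,4,1)$-DM exist? This is exactly what is settled in \cite{Ge,PanChang}, whose classification says that, for $|H|>1$, an $(H,4,1)$-DM over an abelian group $H$ exists if and only if either $|H|$ is odd with $|H|>3$, or $|H|$ is even with non-cyclic Sylow $2$-subgroups. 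Quoting this and running it through the equivalence yields exactly the two assertions of the theorem (the trivial group, where everything is vacuous, being tacitly excluded).

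For completeness I would spell out the elementary necessity half directly, so that it does not rely on \cite{Ge,PanChang}. If $|H|$ is even and its Sylow $2$-subgroup is cyclic, then $H$ is not admissible, hence by the (easy) necessity direction of the Hall--Paige condition \cite{HP} it has no complete mapping, equivalently no $(H,3,1)$-DM at all, and a fortiori no homogeneous one; note this already disposes of $|H|=2$ and $|H|=6$. If $|H|=3$, a homogeneous $(\Z_3,3,1)$-DM would consist of three permutations of $\Z_3$ with all three pairwise differences again permutations; normalising one row to $(0,1,2)$, a one-line exhaustive check over the remaining rows shows this is impossible. Together with the convention that $|H|=1$ is not considered, these observations cover every ``only if'' instance; the substantive part is the ``if'' direction, which is imported wholesale.

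The proof therefore carries no genuine mathematical difficulty of its own --- the hard classification is borrowed. The one point that needs attention is matching the imported result to the form used here: \cite{Ge,PanChang} may phrase the even-order criterion through the existence of a complete mapping (i.e., admissibility) rather than explicitly via Sylow $2$-subgroups, but for abelian groups these are the same condition; and one must bear in mind that the ``if'' direction really does require the full $(H,4,1)$-DM classification, since mere admissibility of $H$ only produces an ordinary $(H,3,1)$-DM (a complete mapping), which is strictly weaker than a homogeneous one.
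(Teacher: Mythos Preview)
Your proposal is correct and follows exactly the paper's approach: the theorem is stated there as an immediate consequence of the $(H,4,1)$-DM classification from \cite{Ge,PanChang}, via the equivalence between homogeneous $(H,3,1)$-DMs and $(H,4,1)$-DMs established in the preceding paragraph. Your additional direct verification of the necessity half (Hall--Paige for the even case, exhaustive check for $|H|=3$) goes beyond what the paper spells out but is entirely consistent with it.
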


\begin{cor}
There exists a homogeneous $(V_n,3,1)$-DM if and only if $3<n\not\equiv2$ $($mod $4)$.
\end{cor}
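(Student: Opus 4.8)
The plan is to derive this from Theorem~\ref{HDM} together with the structure of the ring $\F_n$ recalled in Section~\ref{notation}. Recall that $V_n$ is the additive group of $\F_n$, which is the direct product of the fields $\F_{q_1},\dots,\F_{q_\omega}$ whose orders are the components of $n$; hence as an abelian group $V_n\cong\prod_i(\Z_{q_i})$ where each $q_i$ is a prime power. So I need to determine exactly when this abelian group falls under one of the two cases of Theorem~\ref{HDM}.

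First I would dispose of the trivial requirement $n>3$: when $n=1$ the group $V_n$ is trivial and when $n=3$ we have $V_n\cong\Z_3$, and in both cases Theorem~\ref{HDM} (odd-order case) forbids a homogeneous DM since $|V_n|\le 3$; conversely for $n>3$ we must produce one. Next I would split according to the parity of $n$. If $n$ is odd, then every component $q_i$ is an odd prime power, so $V_n$ is an abelian group of odd order, and Theorem~\ref{HDM} gives a homogeneous $(V_n,3,1)$-DM precisely when $|V_n|=n>3$. This settles all odd $n$, and all such $n$ indeed satisfy $n\not\equiv2$ (mod $4$), consistent with the claimed condition.

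If $n$ is even, exactly one component of $n$ is a power of $2$, say $q_1=2^a$ with $a\ge1$, and the remaining components are odd. Then $V_n\cong \Z_{2^a}\times(\text{odd group})$, so a $2$-Sylow subgroup of $V_n$ is cyclic of order $2^a$ if $a=1$ (i.e. $n\equiv2$ mod $4$) and is $\Z_{2^a}$ with $a\ge2$, still cyclic, when $4\mid n$. This is the one subtle point: unlike a general abelian $2$-group, the $2$-part of $V_n$ is always \emph{cyclic}, because $\F_{2^a}$ as an additive group is $\Z_{2^a}$ only when\,\dots\ wait --- in fact $\F_{2^a}$ is elementary abelian $(\Z_2)^a$, not $\Z_{2^a}$. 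I would therefore recheck: $V_{2^a}$ is the additive group of the field $\F_{2^a}$, which is $(\Z_2)^a$; so for $a\ge2$ the $2$-Sylow is \emph{not} cyclic, whereas for $a=1$ it is $\Z_2$, cyclic. Thus the $2$-Sylow subgroups of $V_n$ are cyclic if and only if the $2$-component of $n$ is exactly $2$, i.e. $n\equiv2$ (mod $4$). So by Theorem~\ref{HDM}, a homogeneous $(V_n,3,1)$-DM exists for even $n$ if and only if $n\not\equiv2$ (mod $4$); note that any such even $n$ automatically has $n\ge4>3$.

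Combining the two cases, a homogeneous $(V_n,3,1)$-DM exists if and only if $n>3$ and $n\not\equiv2$ (mod $4$), which is exactly the stated condition $3<n\not\equiv2$ (mod $4$). The main obstacle is simply keeping straight the additive structure of $\F_{2^a}$ (elementary abelian, hence non-cyclic once $a\ge2$) versus a na\"ive reading as $\Z_{2^a}$; once that is pinned down, the corollary is an immediate translation of Theorem~\ref{HDM} through the definition of $V_n$.
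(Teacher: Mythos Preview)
Your argument is correct and follows exactly the intended derivation from Theorem~\ref{HDM}: the paper states this as an unproved corollary, and the only content is recognizing that the $2$-Sylow of $V_n$ is the additive group $(\Z_2)^a$ of $\F_{2^a}$, hence non-cyclic precisely when $a\geq2$. You should clean up the false start (the early claim $V_n\cong\prod_i\Z_{q_i}$ is wrong for non-prime $q_i$, as you yourself notice mid-proof), but the mathematics stands.
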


Difference matrices are also a crucial topic of Design Theory \cite{BJL,CD}. They have been used explicitly or implicitly in a lot of papers
especially for the composition constructions of designs with a regular automorphism group starting from some early work by
Jungnickel \cite{Jung} and Colbourn \cite{CC}.
Homogeneous difference matrices have been used later for the composition constructions of several kinds of resolvable designs
(see, e.g., \cite{AFL}).
As far as we are aware the quite appropriate term {\it homogeneous} was coined in \cite{KM} when other
authors choose other terms as {\it good} \cite{BZ} about the same time.

We need to devise a new type of difference matrix that we call {\it splittable}.
\begin{defn}
Let $J$ be a subgroup of order $2$ of a group $H$ and let $M$ be a $(H,3,1)$-DM.
We say that $M$ is $J$-splittable if the first half and the second half of each row
of $M$ is a complete system of representatives for the left cosets of $J$ in $H$.
\end{defn}

We give three examples of splittable difference matrices that will be crucial for
the construction of some classes of resolvable difference families.

\begin{ex}\label{(12,3,1)-DM}
Consider the following matrix $M$ with elements in $G_1$
$$\left( \begin{array}{cccccc|cccccc}
000 & 010 & 100 & 110 & 200 & 210 & 011 & 001 & 111& 101&211&201\\
000 &100 & 210 & 010 & 110 & 200 & 000 &101 & 010 & 210 & 200 & 100\\
010&200&210&100&000&110& 101&001&200&011&201&111\\
\end{array}\right)$$
where, to save space, each element $(a,b,c)$ has been denoted by $abc$.
It is straightforward to check that $M$ is a J-splittable $(G_1,3,1)$-DM
with $J=\{(0,0,0),(0,1,1)\}$.
\end{ex}

\begin{ex}\label{(12,3,1)-DMbis}
Consider the following matrix $M$ with elements in $\Z_2\times\Z_6$ 
$$\left( \begin{array}{cccccc|cccccc}
00 & 01 & 02 & 03 & 04 & 05 & 10 & 11 & 12& 13& 14 & 15\\
00 &12 & 15 & 04 & 01 & 03 & 00 &13 & 11 & 05 & 02 & 04\\
03 & 01 & 15 &12 & 00 &04& 11& 05 & 04 & 03 & 12 & 00\\
\end{array}\right)$$
where, to save space, each element $(a,b)$ has been denoted by $ab$.
It is straightforward to check that $M$ is a J-splittable $(\Z_2\times\Z_6,3,1)$-DM
with $J=\{(0,0),(1,0)\}$.
\end{ex}

\begin{ex}\label{(16,3,1)-DM}
Consider the following matrix $M$ with elements in $\Z_4\times\Z_4$ 
$$\left(\begin{array}{cccccccc|cccccccc}
00&30&11&20&01&10&31&21&22&12&33&02&23&32&13&03\\
22&11&30&10&21&23&02&31&13&20&32&00&12&33&01&03\\
22&31&03&01&10&30&20&33&32&12&00&21&13&23&11&02\\
\end{array}
\right)$$
where, to save space, each element $(a,b)$ has been denoted by $ab$.
It is straightforward to check that $M$ is a J-splittable $(\Z_4\times\Z_4,3,1)$-DM
with $J=\{(0,0),(2,2)\}$.
\end{ex}

The matrix of the third example is also homogeneous.
The following theorem explains how doubly disjoint or resolvable difference families in a quotient group $G/H$ can be combined 
with homogeneous or splittable difference matrices in $H$ for the construction of resolvable difference families in $G$.

\begin{thm}\label{DF+DM}
Let $H$ be a normal subgroup of a pertinent group $G$ and let $L$ be a subgroup of $G$ containing $H$.
If ${\cal F}$ is a $(G/H,L/H,3,1)$-DF and $M$ is a $(H,3,1)$-DM, then there exists a $(G,L,3,1)$-DF.

Let $j$ be an involution of $G$ and assume that one of the following additional hypotheses holds.
\begin{itemize}
\item[\rm(i)] $j\not\in H$, $\cal F$ is $\{H,j+H\}$-resolvable, and $M$ is homogeneous;
\item[\rm(ii)] $j\in H$, ${\cal F}$ is doubly disjoint, and $M$ is $\{0,j\}$-splittable.
\end{itemize} 
Then there exists a $\{0,j\}$-resolvable $(G,L,3,1)$-DF.
\end{thm}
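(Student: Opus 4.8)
The plan is to lift the quotient difference family $\mathcal{F}$ through the difference matrix $M$, keeping careful track of cosets so that resolvability is not lost. Index the $|H|$ columns of $M$ by the elements of $H$, writing $M=(m_{i,h})$ with $i\in\{1,2,3\}$ and $h\in H$; the defining property of a DM then reads: for $i\neq i'$ the map $h\mapsto m_{i,h}-m_{i',h}$ is a bijection of $H$, and $M$ is homogeneous exactly when each row $h\mapsto m_{i,h}$ is a bijection of $H$. For a base block $B=\{\beta_1,\beta_2,\beta_3\}$ of $\mathcal{F}$ fix representatives $b_1,b_2,b_3\in G$ with $b_i+H=\beta_i$, and for $h\in H$ put $B_h=\{b_1+m_{1,h},\,b_2+m_{2,h},\,b_3+m_{3,h}\}$. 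Since $H\trianglelefteq G$ one has $b_i+H-b_j=(b_i-b_j)+H$ and $\beta+j=\beta+(j+H)$; I use these identities throughout. For the unconditional statement set $\mathcal{F}'=\{B_h : B\in\mathcal{F},\ h\in H\}$. Fixing $B$ and an ordered pair $i\neq j$, as $h$ runs over $H$ the element $(b_i+m_{i,h})-(b_j+m_{j,h})=b_i+(m_{i,h}-m_{j,h})-b_j$ runs bijectively over the coset $\beta_i-\beta_j$ of $G/H$; summing over $\mathcal{F}$ and over ordered pairs, and using $\Delta\mathcal{F}=(G/H)\setminus(L/H)$, gives $\Delta\mathcal{F}'=G\setminus L$ with every element covered once. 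As the three entries of $B_h$ lie in the distinct cosets $\beta_1,\beta_2,\beta_3$, each $B_h$ is a genuine triple, so $\mathcal{F}'$ is a $(G,L,3,1)$-DF.

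In case (i) I keep the same $\mathcal{F}'$ and verify $\Phi(\mathcal{F}')+\{0,j\}=G\setminus L$ (note $j\in L$, so $\{0,j\}\leq L$). Homogeneity of $M$ gives $\{b_i+m_{i,h}:h\in H\}=\beta_i$, so $\Phi(\mathcal{F}')$ is the union of the cosets occurring in $\Phi(\mathcal{F})$; this union is disjoint because, $\mathcal{F}$ being $\{H,j+H\}$-resolvable, $\Phi(\mathcal{F})$ is a set of cosets. Hence $\Phi(\mathcal{F}')+\{0,j\}$ is the union of all cosets lying in $\Phi(\mathcal{F})\cup(\Phi(\mathcal{F})+(j+H))$, which by the resolvability of $\mathcal{F}$ equals $(G/H)\setminus(L/H)$ (a disjoint union of cosets), and therefore equals $G\setminus L$, each point once.

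Case (ii), where $j\in H$, is the substantial one. Using that $\mathcal{F}$ is doubly disjoint, fix a DF $\mathcal{F}^*=\{B_k^*\}$ strongly equivalent to $\mathcal{F}=\{B_k\}$, say $B_k^*=B_k+t_k$, whose blocks together with those of $\mathcal{F}$ tile $(G/H)\setminus(L/H)$; choose representatives $b_{k,i}$ for $B_k$, a representative $s_k$ of $t_k$, and set $b_{k,i}^*=b_{k,i}+s_k$. Let $C_1\sqcup C_2$ be the partition of the column set of $M$ into the two halves, so that for each row $i$ both $\{m_{i,h}:h\in C_1\}$ and $\{m_{i,h}:h\in C_2\}$ are complete systems of representatives for the cosets of $\{0,j\}$ in $H$ (this is the $\{0,j\}$-splittability). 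Define $\mathcal{F}'=\{(B_k)_h : k,\ h\in C_1\}\cup\{(B_k^*)_h : k,\ h\in C_2\}$. For the flatten: the entries of the blocks $(B_k)_h$, $h\in C_1$, lying in a coset $\beta_{k,i}$ of $\mathcal{F}$ form $b_{k,i}+\{m_{i,h}:h\in C_1\}$, a complete system of representatives for the cosets of $\{0,j\}$ inside $\beta_{k,i}$; the same holds for the cosets of $\mathcal{F}^*$ using $C_2$. Since $\Phi(\mathcal{F})$ and $\Phi(\mathcal{F}^*)$ partition $(G/H)\setminus(L/H)$, every coset outside $L/H$ is hit exactly once in this way, so $\Phi(\mathcal{F}')$ meets each coset of $\{0,j\}$ in $G\setminus L$ exactly once, i.e.\ $\Phi(\mathcal{F}')+\{0,j\}=G\setminus L$. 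For the differences: the pair $(\beta_{k,i},\beta_{k,j})$ contributes from $\mathcal{F}$ the set $\{b_{k,i}+d-b_{k,j}:d\in D\}$ with $D=\{m_{i,h}-m_{j,h}:h\in C_1\}$, and the matching pair in $B_k^*$ contributes $\{b_{k,i}+(s_k+d-s_k)-b_{k,j}:d\in H\setminus D\}$; when $H$ is abelian (which holds in all our applications), the two ranges recombine to all of $H$, so the coset $(b_{k,i}-b_{k,j})+H$ is covered exactly once. Summing over $k$ and over pairs, and using $\Delta\mathcal{F}=\Delta\mathcal{F}^*=(G/H)\setminus(L/H)$ together with the tiling, yields $\Delta\mathcal{F}'=G\setminus L$, so $\mathcal{F}'$ is a $\{0,j\}$-resolvable $(G,L,3,1)$-DF.

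The main obstacle is precisely this bookkeeping in case (ii): one has to pick the second, tiling copy $\mathcal{F}^*$ of the doubly disjoint $\mathcal{F}$ and match the blocks of $\mathcal{F}$ and of $\mathcal{F}^*$ against the two representative-halves $C_1,C_2$ of $M$ so that, simultaneously, the halves recombine to produce every row-difference in $H$ (which is what forces $\Delta\mathcal{F}'=G\setminus L$) and the flatten stays a single complete system of representatives for the cosets of $\{0,j\}$ (which is what forces resolvability). By contrast, the unconditional part and case (i) are routine once the lifting $B\mapsto\{B_h\}_{h\in H}$ and the normality identities $b_i+H-b_j=(b_i-b_j)+H$ and $\beta+j=\beta+(j+H)$ are in hand.
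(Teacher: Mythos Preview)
Your treatment of the unconditional statement and of case (i) is essentially the same as the paper's and is correct. The problem is in case (ii), where your construction differs from the paper's and the difference computation breaks down.

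You form $(B_k^*)_h$ by inserting $s_k$ \emph{between} $b_{k,i}$ and $m_{i,h}$, so the difference coming from the ordered pair $(i,j)$ is $b_{k,i}+s_k+(m_{i,h}-m_{j,h})-s_k-b_{k,j}$. You then claim that the $C_1$ and $C_2$ contributions recombine to the full coset ``when $H$ is abelian''. But $s_k$ lives in $G$, not in $H$; even if $H$ is abelian, conjugation by $s_k$ is a generally nontrivial automorphism of $H$ (normality only says it maps $H$ to $H$), so there is no reason for $s_k+(H\setminus D)-s_k$ to equal $H\setminus D$. Hence $\Delta\mathcal{F}'=G\setminus L$ is not established. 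Moreover, the parenthetical ``which holds in all our applications'' is both an unlicensed extra hypothesis on the theorem and factually wrong: in Corollary~\ref{36Q} one has $H=G_1\times V_1\cong G_1$, which is non-abelian.

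The paper avoids this entirely by translating on the \emph{right}: for $c>|H|/2$ it uses $\overline{B_i}\circ M^c+t_i$, a genuine translate of a block of $\mathcal{F}\circ M$, so its list of differences is unchanged and $\Delta(\mathcal{F}\,\RIGHTcircle\,M)=\Delta(\mathcal{F}\circ M)=G\setminus L$ for free. The price is that the flatten check then produces $-m_{r,c_1}+m_{r,c_2}\in\{0,\,t_i+j-t_i\}$, and one needs $t_i+j-t_i=j$ to invoke $\{0,j\}$-splittability. Here the hypothesis that $G$ is pertinent is used: since $j\in H\trianglelefteq G$, the subgroup $H$ is itself pertinent, so any conjugate of $j$ by the chosen $\tau_i$ can be conjugated back to $j$ by some $h_i\in H$, and one replaces $\tau_i$ by $t_i=h_i+\tau_i$ (same coset in $G/H$) to force $t_i$ to commute with $j$. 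This adjustment is the missing idea in your case (ii); once you make it, both the difference and the flatten arguments go through without any abelian assumption on $H$.
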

\begin{proof}
The first part of the statement has been already proved in \cite{recursive} (see Corollary 5.8). It is convenient, however, to recall how the
$(G,L,3,1)$-DF can be constructed.
Let $^-: g\in G \longrightarrow \overline{g}=g+H\in G/H$, be the canonical epimorphism from $G$ to $G/H$,
let ${\cal F}=\{\overline{B_i} \ | \ i\in I\}$ with $B_i=\{b_{i,1},b_{i,2},b_{i,3}\}$, and let
$M=(m_{r,c})$. For every block $\overline{B_i}$ of ${\cal F}$ and for every column $M^c=(m_{1,c},m_{2,c},m_{3,c})$ of $M$,  set
$\overline{B_i}\circ M^c=\{b_{i,1}+m_{1,c},b_{i,2}+m_{2,c},b_{i,3}+m_{3,c}\}$. Then
$${\cal F}\circ M:=\{\overline{B_i}\circ M^c \ | \ i\in I; 1\leq c\leq |H|\}$$
is a $(G,L,3,1)$-DF. 

\medskip
Assume that condition (i) holds. 

\noindent
If two elements $\phi_1=b_{i_1,r_1}+m_{r_1,c_1}$ and $\phi_2=b_{i_2,r_2}+m_{r_2,c_2}$ of $\Phi({\cal F}\circ M)$ are
in the same left coset of $\{0,j\}$ in $G$, then we have $$-m_{r_1,c_1}-b_{i_1,r_1}+b_{i_2,r_2}+m_{r_2,c_2}\in\{0,j\}.$$
Reducing modulo $H$ we get $-\overline{b_{i_1,r_1}}+\overline{b_{i_2,r_2}}\in \{\overline0,\overline{j}\}$.
This necessarily implies that $(i_1,r_1)=(i_2,r_2)$ because ${\cal F}$ is $\{\overline0,\overline{j}\}$-resolvable.
Thus, setting $r_1=r_2=r$, we have $-m_{r,c_1}+m_{r,c_2}\in\{0,j\}$. 
It cannot be $-m_{r,c_1}+m_{r,c_2}=j$ since $j$ does not belong to $H$, hence $-m_{r,c_1}+m_{r,c_2}=0$, i.e., 
$m_{r,c_1}=m_{r,c_2}$. This implies that $c_1=c_2$ because $M$ is homogeneous. We conclude that the two triples
$(i_1,r_1,c_1)$ and $(i_2,r_2,c_2)$ coincide, i.e., ${\cal F}\circ M$ is $\{0,j\}$-resolvable.

\medskip
Now assume that condition (ii) holds.

\noindent
For each $i\in I$ there is a suitable translate of $\overline B_i$, say $\overline{B'_i}=\overline{B_i}+\overline{\tau_i}$, such that ${\cal F}$ and
${\cal F}'=\{\overline{B'_i} \ | \ i\in I\}$ are $(G/H,L/H,3,1)$-DFs with
$\Phi({\cal F}) \ \cup \ \Phi({\cal F}')$ a partition of $(G/H) \setminus (L/H)$. 
Note that we can rewrite each $\overline{B'_i}$ in the form $\overline{B'_i}=B_i+t_i$ for a suitable $t_i$ which commutes with $j$.
Indeed we have $\tau_i+j-\tau_i=j_i$ with $j_i$ one of the three involutions of $G$. The fact that $j\in H \trianglelefteq G$
implies that $H$ is pertinent so that there exists $h_i\in H$ such that $h_i+j_i-h_i=j$. Then, setting $t_i=h_i+\tau_i$, it is easy to see that
$\overline{B'_i}=B_i+t_i$ and that $t_i+j=j+t_i$.

Set

\medskip\noindent
${\cal F} \ \scriptsize{\RIGHTcircle}  \ M=\{\overline{B_i}\circ M^c \ | \ i\in I; 1\leq c\leq{|H|\over2}\} \ \cup$

\medskip\hfill
$\{\overline{B_i}\circ M^c+t_i \ | \ i\in I; {|H|\over2}< c\leq|H|\}.$

\medskip\noindent
Of course ${\cal F} \ \scriptsize{\RIGHTcircle} \ M$ is a $(G,H,k,1)$-DF which is strongly equivalent to ${\cal F} \ {\circ} \ M$.
Let us show that it is $\{0,j\}$-resolvable. We have:
$$\Phi({\cal F} \ \scriptsize{\RIGHTcircle} \ M)=\{\phi_{i,r,c} \ | \ i\in I; 1\leq r \leq 3; \ 1\leq c\leq |H|\}$$
with $$\phi_{i,r,c}=\begin{cases}b_{i,r}+m_{r,c} & \mbox{ if $c\leq|H|/2$};
\medskip\cr b_{i,r}+m_{r,c}+t_i & \mbox{ if $c>|H|/2$.}\end{cases}$$
Assume that we have
\begin{equation}\label{-phi+phi}
-\phi_{i_1,r_1,c_1}+\phi_{i_2,r_2,c_2}\in\{0,j\}
\end{equation}
for suitable triples $(i_1,r_1,c_1)$ and $(i_2,r_2,c_2)$. 
We have to prove that  these triples are necessarily equal.
Without loss of generality we can assume that $c_1\leq c_2$. So we have the following three possible cases.

\medskip
\underline{1st case}: $c_1\leq c_2\leq |H|/2$.

\medskip
Here (\ref{-phi+phi}) implies that $-m_{r_1,c_1}-b_{i_1,r_1}+b_{i_2,r_2}+m_{r_2,c_2}\in \{0,j\}$.
Reducing modulo $H$ we get $\overline{b_{i_1,r_1}}=\overline{b_{i_2,r_2}}$ and then $(i_1,r_1)=(i_2,r_2)$ because
${\cal F}$ is disjoint. Thus,
setting $r_1=r_2=r$, we have $-m_{r,c_1}+m_{r,c_2}\in\{0,j\}$ that is possible only for $c_1=c_2$
because $M$ is $\{0,j\}$-splittable. 
We conclude that $(i_1,r_1,c_1)=(i_2,r_2,c_2)$.

\medskip
\underline{2nd case}: $|H|/2< c_1\leq c_2\leq |H|$.

\medskip
Here (\ref{-phi+phi}) implies that $-t_{i_1}-m_{r_1,c_1}-b_{i_1,r_1}+b_{i_2,r_2}+m_{r_2,c_2}+t_{i_2}\in \{0,j\}$.
Reducing modulo $H$ we get $\overline{b_{i_1,r_1}}+\overline{t_{i_1}}=\overline{b_{i_2,r_2}}+\overline{t_{i_2}}$ and then $(i_1,r_1)=(i_2,r_2)$ because
${\cal F}'$ is disjoint. Thus,
setting $i_1=i_2=i$ and $r_1=r_2=r$, we have $-m_{r,c_1}+m_{r,c_2}\in\{0,t_i+j-t_i\}=\{0,j\}$, the last equality being true since $t_i$ commutes with $j$. 
This is possible only for $c_1=c_2$ because $M$ is $\{0,j\}$-splittable. 
We conclude that $(i_1,r_1,c_1)=(i_2,r_2,c_2)$.

\medskip
\underline{3rd case}: $c_1\leq |H|/2$; $c_2> |H|/2$.

\medskip
Here (\ref{-phi+phi}) gives $-m_{r_1,c_1}-b_{i_1,r_1}+b_{i_2,r_2}+m_{r_2,c_2}+t_i\in \{0,j\}$.
Reducing modulo $H$ we get $\overline{b_{i_1,r_1}}=\overline{b_{i_2,r_2}}+\overline{t_i}$.
On the other hand $\overline{b_{i_2,r_2}}$ and $\overline{b_{i_1,r_1}}+\overline{t_i}$ belong to $\Phi({\cal F})$
and $\Phi({\cal F}')$, respectively. This is absurd since $\Phi({\cal F})$ and $\Phi({\cal F}')$
are disjoint.
\end{proof}

As an important consequence of the above theorem we get the following corollaries.

\begin{cor}\label{36Q}
If the components of $n$ are all congruent to $1$ $($mod $4)$,
then there exists a $(G_1\times V_{3n},G_1\times V_3,3,1)$-RDF.
\end{cor}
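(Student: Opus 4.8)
The plan is to obtain this as an instance of Theorem \ref{DF+DM} in its form (ii), feeding it a doubly disjoint difference family supplied by Theorem \ref{DDDF} together with the splittable difference matrix of Example \ref{(12,3,1)-DM}. First I would set $G=G_1\times V_{3n}$, $H=G_1\times\{0\}$ and $L=G_1\times V_3$. Since the components of $n$ are all $\equiv1$ (mod $4$), they are odd and none of them equals $3$; hence $\F_{3n}=\F_3\times\F_n$, so $V_{3n}$ decomposes as $\Z_3\times V_n$ with the $\Z_3$-factor corresponding to $V_3$. Consequently $G$ is pertinent (it is the direct product of the pertinent group $G_1$ with the odd-order group $V_{3n}$), $H$ is a normal subgroup of $G$ (being a direct factor), $L$ is a subgroup containing $H$, and the canonical projection identifies $G/H$ with $\Z_3\times V_n$ and $L/H$ with $\Z_3\times V_1$.

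Next I would invoke Theorem \ref{DDDF}: because the components of $n$ are congruent to $1$ (mod $4$), there exists a doubly disjoint $(\Z_3\times V_n,\Z_3\times V_1,3,1)$-DF, which through the identification above is a doubly disjoint $(G/H,L/H,3,1)$-DF $\cal F$. For the matrix ingredient I would take $M$ to be the $\{0,j\}$-splittable $(G_1,3,1)$-DM of Example \ref{(12,3,1)-DM}, where $j=(0,1,1)$; recall from Section \ref{notation} that $(0,1,1)$ is the canonical involution of $G_1$, so $j\in H$ and, regarded inside $G=G_1\times V_{3n}$, $j=(0,1,1,0)$ is the canonical involution of $G$. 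At this point hypothesis (ii) of Theorem \ref{DF+DM} is met — $j\in H$, $\cal F$ is doubly disjoint, and $M$ is $\{0,j\}$-splittable — and the theorem yields a $\{0,j\}$-resolvable $(G,L,3,1)$-DF, that is, a $(G_1\times V_{3n},G_1\times V_3,3,1)$-RDF in the sense fixed in Section \ref{notation}.

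The argument requires no genuinely new computation: everything quantitative is already inside Example \ref{(12,3,1)-DM} and the proof of Theorem \ref{DDDF}. The only points deserving care, and the closest thing to an obstacle, are bookkeeping: verifying that $V_{3n}$ really splits as $\Z_3\times V_n$ (this is exactly where the hypothesis ``no component equal to $3$'', i.e.\ components $\equiv1$ mod $4$, is used), checking that $H\trianglelefteq G$ and that $G$ is pertinent, and making sure that the involution witnessing the splittability of $M$ coincides with the canonical involution of $G$ so that the output is legitimately an RDF in the paper's terminology.
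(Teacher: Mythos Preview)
Your proof is correct and follows exactly the same route as the paper: set $G=G_1\times V_{3n}$, $L=G_1\times V_3$, $H=G_1\times V_1$, identify $G/H$ with $\Z_3\times V_n$ and $L/H$ with $\Z_3\times V_1$, then combine the doubly disjoint DF from Theorem~\ref{DDDF} with the $\{0,(0,1,1)\}$-splittable $(G_1,3,1)$-DM of Example~\ref{(12,3,1)-DM} via Theorem~\ref{DF+DM}(ii). The only minor imprecision is your closing remark that the hypothesis ``components $\equiv1\pmod4$'' is used solely for the splitting $V_{3n}\cong\Z_3\times V_n$; in fact that splitting only needs $\gcd(3,n)=1$, while the full congruence hypothesis is what makes Theorem~\ref{DDDF} applicable.
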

\begin{proof}
Consider the group $G=G_1\times V_{3n}$ and its subgroups $L=G_1\times V_3$, and $H=G_1\times V_1$.
We have $G/H\simeq V_{3n}$ and $L/H\simeq V_3$, hence there exists a doubly disjoint
$(G/H,L/H,3,1)$-DF by Theorem \ref{DDDF}. There also exists a splittable $(H,3,1)$-DM by Example \ref{(12,3,1)-DM}.
Thus, by Theorem \ref{DF+DM}(ii), there exists a $(G,L,3,1)$-RDF, i.e., a $(G_1\times V_{3n}, G_1\times V_3,3,1)$-RDF.
\end{proof}

\begin{cor}\label{48Q}
If the components of $n$ are all congruent to $1$ $($mod $4)$,
then there exists a $(G_2\times V_{n},G_1\times V_1,3,1)$-RDF.
\end{cor}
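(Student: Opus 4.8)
The plan is to obtain the $(G_2\times V_n,G_1\times V_1,3,1)$-RDF in two stages. First I would construct a $(G_2\times V_n,G_2\times V_1,3,1)$-RDF by a single application of Theorem~\ref{DF+DM}(ii), in the same spirit as Corollary~\ref{36Q}, and then I would splice it onto the $(G_2,G_1,3,1)$-RDF of Subsection~\ref{51} by means of Proposition~\ref{matrioska}.

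For the first stage, put $G=G_2\times V_n$; since $V_n$ has odd order, $G$ is pertinent. Let $N$ be the kernel of the projection $G\to\Z_3$ sending $(a,b,c,v)$ to $a$, that is, $N=(\{0\}\times\Z_4\times\Z_4)\times V_1$; this $N$ is normal in $G$, is isomorphic to $\Z_4\times\Z_4$, and is contained in $L:=G_2\times V_1$. One checks that $G/N\cong\Z_3\times V_n$ and $L/N\cong\Z_3\times V_1$, with $L/N$ sitting inside $G/N$ as the first factor. As every component of $n$ is congruent to $1$ (mod $4$), Theorem~\ref{DDDF} provides a doubly disjoint $(\Z_3\times V_n,\Z_3\times V_1,3,1)$-DF, which through these isomorphisms becomes a doubly disjoint $(G/N,L/N,3,1)$-DF. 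Example~\ref{(16,3,1)-DM} provides a $\{(0,0),(2,2)\}$-splittable $(\Z_4\times\Z_4,3,1)$-DM; under the identification $N\cong\Z_4\times\Z_4$ the element $(2,2)$ becomes the canonical involution $j=(0,2,2,0)$ of $G$, and $j\in N$. Theorem~\ref{DF+DM}(ii) then delivers a $\{0,j\}$-resolvable $(G,L,3,1)$-DF, i.e. a $(G_2\times V_n,G_2\times V_1,3,1)$-RDF.

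For the second stage, identify $G_1\times V_1$ with the subgroup $H_0:=\Z_3\times2\Z_4\times2\Z_4$ of $G_2$ (the subgroup denoted $H$ in Subsection~\ref{51}), so that the copy of $G_1\times V_1$ inside $G$ is $H_0\times V_1$ and $j=(0,2,2,0)\in H_0\times V_1$. Form the chain $G_2\times V_n\supseteq G_2\times V_1\supseteq G_1\times V_1\supseteq J$ with $J=\{0,j\}$. Since the canonical involution of $G_2$ is $(0,2,2)$, the $(G_2,H_0,3,1)$-RDF of Subsection~\ref{51} is a $J$-resolvable $(G_2\times V_1,G_1\times V_1,3,1)$-DF; combining it with the RDF of the first stage, Proposition~\ref{matrioska} returns a $J$-resolvable $(G_2\times V_n,G_1\times V_1,3,1)$-DF, which is the required RDF.

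All the verifications here — normality of $N$, the quotients $G/N$ and $L/N$, and the fact that the canonical involution $(0,2,2,0)$ lies in $N$, lies in $H_0\times V_1$, and matches both the $(2,2)$ of Example~\ref{(16,3,1)-DM} and the canonical involution of $G_2$ — are routine. The only genuinely non-automatic point, and the reason the argument is not a single application of Theorem~\ref{DF+DM}, is that $G_1\times V_1$ cannot be reached directly: the largest normal subgroup of $G_2\times V_n$ contained in $G_1\times V_1$ is a Klein four-group, whose quotient $G_1\times V_n$ is not of the shape handled by Theorem~\ref{DDDF}, so the passage through $G_2\times V_1$ is forced.
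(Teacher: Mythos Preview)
Your proof is correct, but it follows a genuinely different route from the paper's. The paper passes through the intermediate subgroup $G_1\times V_n$ rather than $G_2\times V_1$: it takes $H=\{0\}\times V_n$, so that $G/H\simeq G_2$ and $L/H\simeq G_1$, and applies Theorem~\ref{DF+DM}(i) (not (ii)) using the $(G_2,G_1,3,1)$-RDF of Subsection~\ref{51} as the quotient family together with a \emph{homogeneous} $(V_n,3,1)$-DM from Theorem~\ref{HDM}. This yields a $(G_2\times V_n,G_1\times V_n,3,1)$-RDF, which is then spliced onto the $(G_1\times V_n,G_1\times V_1,3,1)$-RDF supplied by Theorem~\ref{15mod24}, via Proposition~\ref{matrioska} along the chain $G_2\times V_n\geq G_1\times V_n\geq G_1\times V_1$. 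So the two proofs share the final matrioska step but descend through complementary subgroups: yours kills the $\Z_4^2$-part first with the splittable matrix of Example~\ref{(16,3,1)-DM} and the doubly disjoint family of Theorem~\ref{DDDF}, while the paper kills the $V_n$-part first with a homogeneous matrix and reaches $G_1\times V_1$ through the direct construction of Theorem~\ref{15mod24}. Your argument is closer in spirit to Corollary~\ref{36Q}; the paper's has the minor advantage of staying within part~(i) of Theorem~\ref{DF+DM}, where the involution lies outside the normal subgroup and no pertinence-type condition on $H$ is ever invoked.
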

\begin{proof}
Consider the group $G=G_2\times V_{n}$ and its subgroups $L=G_1\times V_n$, and $H=\{0\}\times V_n$.
We have $G/H\simeq G_2$ and $L/H\simeq G_1$, hence there exists a doubly disjoint
$(G/H,L/H,3,1)$-DF (see Subsection \ref{51}). There also exists a homogeneous $(H,3,1)$-DM by Theorem \ref{HDM}.
Thus there exists a $(G,L,3,1)$-RDF, i.e., a $(G_2\times V_{n}, G_1\times V_n,3,1)$-RDF by Theorem \ref{DF+DM}(i).
Now recall that there exists a $(G_1\times V_n,G_1\times V_1,3,1)$-RDF by Theorem \ref{15mod24}.
We get the assertion by applying Proposition \ref{matrioska} with the chain $G_2\times V_n\geq G_1\times V_n\geq G_1\times V_1$.
\end{proof}

\section{Main results}

We are finally able to prove the sufficient conditions given by the main Theorem 1.1.

\subsection{$3$-pyramidal $KTS(24n+9)$} 

Recall that Theorem \ref{9mod24} says that there exists a $(D\times V_{4n+1},D\times V_1,3,1)$-RDF 
whenever the prime decomposition of $4n+1$ does not contain primes $p\equiv3$ $($mod $4)$ raised to an odd power.
Equivalently, whenever $4n+1$ is a sum of two squares (see, e.g., \cite{Rosen}).
Thus, considering that $24n+9=6(4n+1)+3$ and that $D\times V_{4n+1}$ is a pertinent group of order $6(4n+1)$, 
we get Theorem \ref{main}(i) by applying  Proposition \ref{matrioska3}.

\begin{thm}\label{main(i)}
If $4n+1$ is a sum of two squares, then
there exists a $3$-pyramidal KTS$(24n+9)$.
\end{thm}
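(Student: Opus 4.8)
The plan is to obtain this statement as an immediate packaging of the direct construction in Theorem \ref{9mod24} with the bootstrapping device of Proposition \ref{matrioska3}. First I would recall the classical sum-of-two-squares theorem (see, e.g., \cite{Rosen}): a positive integer $N$ is a sum of two squares if and only if every prime $p\equiv3$ (mod $4$) occurs in $N$ to an even exponent. Since $4n+1$ is odd, its components (its maximal prime power divisors) are precisely the prime powers $p^k$ appearing in its factorization with $p$ odd; such a component satisfies $p^k\equiv1$ (mod $4$) exactly when $p\equiv1$ (mod $4$) or $k$ is even. Hence the hypothesis ``$4n+1$ is a sum of two squares'' is equivalent to ``every component of $4n+1$ is congruent to $1$ modulo $4$'', which is exactly the assumption under which Theorem \ref{9mod24} applies.

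Granting that, Theorem \ref{9mod24} produces a $(D\times V_{4n+1},\,D\times V_1,\,3,1)$-RDF (in fact one carrying a large group of strong multipliers, though that is not needed for mere existence). Next I would verify the two structural facts required to feed this into Proposition \ref{matrioska3}: (a) $D\times V_{4n+1}$ is a pertinent group of order $6(4n+1)=24n+6$ --- this is precisely the case $6m$ with $m=4n+1$ odd treated in the ``if part'' of Theorem \ref{PertinentOrders}, where $D\times H$ is shown to be pertinent for any group $H$ of odd order; and (b) the subgroup $H=D\times V_1$ is isomorphic to $D$ (as $V_1$ is trivial), which is one of the five groups listed in Proposition \ref{matrioska3}. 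Applying that proposition with $G=D\times V_{4n+1}$ of order $24n+6$ and $H\cong D$ then yields a $3$-pyramidal KTS on $(24n+6)+3=24n+9$ points, which is the assertion.

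I do not expect a genuine obstacle here: once Theorem \ref{9mod24} and Proposition \ref{matrioska3} are in hand, the only non-mechanical ingredient is the elementary number-theoretic reformulation of ``sum of two squares'' in terms of the components of $4n+1$. If desired, one can additionally track symmetries: the second assertion of Proposition \ref{matrioska3} promotes the strong multipliers of the RDF from Theorem \ref{9mod24} to automorphisms of the resulting KTS, so the constructed KTS$(24n+9)$ in fact admits at least $(24n+6)\cdot d$ automorphisms, where $d$ is the largest odd divisor of $\psi(4n+1)$; this refines Theorem \ref{main}(i) (cf. Remark \ref{rem(i)}) but plays no role in the bare existence claim.
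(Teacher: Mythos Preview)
Your proposal is correct and follows essentially the same approach as the paper: reformulate ``$4n+1$ is a sum of two squares'' via the classical criterion so that Theorem \ref{9mod24} applies, then feed the resulting $(D\times V_{4n+1},D\times V_1,3,1)$-RDF into Proposition \ref{matrioska3} using that $D\times V_{4n+1}$ is pertinent of order $24n+6$. Your additional remark on the automorphism count matches Remark \ref{rem(i)}.
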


\begin{rem}\label{rem(i)}
Recall that Theorem \ref{9mod24} assures a group of strong
multipliers of order the greatest odd divisor of
$\psi(4n+1)$. Therefore, Proposition \ref{matrioska3} guarantees that
the number of symmetries of each KTS$(24n+9)$
obtainable via Theorem \ref{main(i)} is at least equal to
$(24n+6)m$, where $m$ is the greatest odd divisor of
$\psi(4n+1)$.
\end{rem}

\subsection{$3$-pyramidal $KTS(24n+15)$} 
Here we prove Theorem \ref{main}(ii), that is our main result on $3$-pyramidal $KTS(24n+15)$.

\begin{thm}\label{main(ii)}
There exists a $3$-pyramidal KTS$(24n+15)$ whenever one of the following conditions holds:
\begin{description}
\item[1)] $2n+1$ is divisible by $3$;
\item[2)] the square-free part of $2n+1$ does not contain primes congruent to $11$ $($mod $12)$.
\end{description}
\end{thm}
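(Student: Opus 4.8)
The plan is to reduce the existence of a $3$-pyramidal $KTS(24n+15)$ to the existence of a suitable $(G,\{2^3,3\},3,1)$-RDF with $G$ pertinent of order $24n+12$, via Theorem~\ref{DF}; the target group will always be of the form $G_1\times V_m$ where $m=2n+1$, so that $|G_1\times V_m|=12m=24n+12$ as required. First I would handle case~1), when $3\mid 2n+1$. Writing $2n+1=3^a m'$ with $\gcd(m',3)=1$, the idea is to peel off a factor of $3$ at a time: a $(G_1\times V_{3k},G_1\times V_k,3,1)$-RDF is produced either by Corollary~\ref{36Q} (when the components of $k$ are $\equiv1\pmod 4$) or by Corollary~\ref{36P} (when they are $>3$ and $\equiv3\pmod4$), and we iterate this together with the base step $(G_1\times V_{9},G_1\times V_{?},\dots)$ or the PRDF in $G_1\times V_9$ of Appendix~\ref{G1xV9,3,1PRDF}. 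Chaining these with Proposition~\ref{matrioska} (resp.\ Proposition~\ref{matrioska2}) down to a bottom-level $(G_1,\{2^3,3\},3,1)$-RDF — which exists by Subsection~\ref{15} — yields a $(G_1\times V_{2n+1},\{2^3,3\},3,1)$-RDF, whence the KTS by Theorem~\ref{DF}. The delicate bookkeeping here is that the factor of $3$ coming from $3\mid 2n+1$ must be routed through the ``$V_9$'' or ``$V_3$'' ingredients rather than through a direct construction that would require a component $\equiv1\pmod{\text{something}}$; this is exactly what Corollaries~\ref{36P}, \ref{36Q}, \ref{12*9P} are designed to absorb.

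Next I would treat case~2), when the square-free part of $2n+1$ has no prime $\equiv11\pmod{12}$. Every odd prime is $\equiv1,5,7,11\pmod{12}$; primes $\equiv1\pmod{12}$ are $\equiv1\pmod4$ and $\equiv1\pmod6$, primes $\equiv5\pmod{12}$ are $\equiv1\pmod4$, primes $\equiv7\pmod{12}$ are $\equiv3\pmod4$ (and $>3$), while $3\equiv3\pmod4$ is the only prime that is $3$ itself. The strategy is to factor $m=2n+1$ as $m=3^a\cdot p\cdot q$ where $p$ collects the prime powers whose prime base is $\equiv1\pmod4$ and $q$ collects those with prime base $\equiv3\pmod4$, $\equiv7\pmod{12}$ (hence $>3$); the hypothesis guarantees no leftover prime $\equiv11$. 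For the factor $q$, Theorem~\ref{PRDF} (applied to a $(G_1\times V_3,\Sigma,3,1)$-PRDF, i.e.\ Corollary~\ref{36P}, or more generally Corollaries~\ref{36P}–\ref{12*9P}) produces $(G_1\times V_{3q'},G_1\times V_{3},3,1)$- or $(G_1\times V_{9q'},G_1\times V_9,3,1)$-type RDFs; for the factor $p$, Theorem~\ref{15mod24} gives a $(G_1\times V_p,G_1\times V_1,3,1)$-RDF directly (its components are $\equiv1\pmod4$); and the factor $3^a$ is absorbed by Corollaries~\ref{36P}/\ref{36Q} exactly as in case~1). One then assembles a subgroup chain
$$G_1\times V_{2n+1}\ \geq\ \cdots\ \geq\ G_1\times V_3\ (\text{or }G_1\times V_9)\ \geq\ \cdots\ \geq\ G_1,$$
applies Proposition~\ref{matrioska} at the ``$G_1\times V_1$-level'' steps and Proposition~\ref{matrioska2} at the final ``$\{2^3,3\}$'' step (bottoming out at the $(G_1,\{2^3,3\},3,1)$-RDF of Subsection~\ref{15}), and concludes via Theorem~\ref{DF}.

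The main obstacle — and the step deserving the most care — is verifying that the available building blocks genuinely cover all $m=2n+1$ satisfying the hypothesis, in particular the interaction between the ``$\pmod4$'' conditions (needed by Theorems~\ref{9mod24}, \ref{15mod24}, \ref{DDDF}, \ref{PRDF}, and Corollaries~\ref{36Q}, \ref{48Q}) and the congruence class of the prime $\pmod{12}$ assumed in the statement. Concretely, one must check: (a) every prime power component of $p$ has base $\equiv1\pmod4$, so Theorem~\ref{15mod24} applies; (b) every prime power component of $q$ has base $\equiv3\pmod4$ \emph{and} is $>3$, so Corollaries~\ref{36P}/\ref{12*9P}/\ref{48P} apply; (c) the excluded residue $11\pmod{12}$ is precisely the obstruction that would break (b) (since $11\equiv3\pmod4$ but we would need a component congruent to $3\pmod4$ that is handled — and $11$ is, in fact, fine; the genuine issue is rather primes $\equiv 3 \bmod 4$ equal to $3$, handled separately, versus the recursion needing the "$>3$" clause). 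I would also need to confirm that when $m$ itself is a single prime power the degenerate chain (no recursion, just one direct construction) still lands on a pertinent group of the right order, and that the case $n=0$ ($v=15$) is covered by the explicit KTS$(15)$ of Subsection~\ref{15}. Once the case analysis on the multiset of components of $2n+1$ is laid out, each branch is a mechanical concatenation of the cited corollaries, so the real content is organizing that case analysis cleanly.
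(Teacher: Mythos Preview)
Your overall framework is right — reduce to building a $(G_1\times V_{2n+1},\{2^3,3\},3,1)$-RDF and invoke Theorem~\ref{DF} via Proposition~\ref{matrioska3} — and your Case~1 sketch is in the correct spirit, though the ``peel off a factor of~3 at a time'' description is imprecise: Corollaries~\ref{36P} and~\ref{36Q} produce RDFs from $G_1\times V_{3n}$ down to $G_1\times V_3$, not from $G_1\times V_{3k}$ to $G_1\times V_k$. The paper instead separates the $\equiv3\pmod4$ and $\equiv1\pmod4$ components of $(2n+1)/3^e$ into factors $P,Q$ and uses Theorem~\ref{DF+DM}(i) with a homogeneous $(V_Q,3,1)$-DM to lift a $(G_1\times V_{3^eP},G_1\times V_{3^e},3,1)$-RDF (from Corollary~\ref{36P} or~\ref{12*9P}) up to the full group.

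The genuine gap is in Case~2 when $3\nmid 2n+1$. You propose handling the components $\equiv7\pmod{12}$ (which are $\equiv3\pmod4$) via Corollaries~\ref{36P} and~\ref{12*9P}, but those results land in $G_1\times V_{3n}$ or $G_1\times V_{9n}$ relative to $G_1\times V_3$ or $G_1\times V_9$: they \emph{require} a factor of~$3$ inside the $V$-part. When $3\nmid 2n+1$, neither $G_1\times V_3$ nor $G_1\times V_9$ sits inside $G_1\times V_{2n+1}$, so your proposed chain cannot be formed.

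The missing ingredient is Theorem~\ref{15mod24bis}, which you never invoke. It gives a $(G_1\times V_n,G_1\times V_1,3,1)$-RDF directly whenever all components of $n$ are $\equiv1\pmod6$. Since $7\equiv1\pmod6$, this handles the product $P$ of components $\equiv7\pmod{12}$ without any auxiliary factor of~$3$; the paper then combines this with Theorem~\ref{15mod24} for the $\equiv1\pmod4$ part $Q$ via Theorem~\ref{DF+DM}(i) and Proposition~\ref{matrioska}. This is also exactly what explains the exclusion of primes $\equiv11\pmod{12}$: such a prime is $\equiv3\pmod4$ (so Theorem~\ref{15mod24} fails) \emph{and} $\equiv5\pmod6$ (so Theorem~\ref{15mod24bis} fails), and with no factor of~$3$ available the PRDF route is closed too. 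Your parenthetical remark that ``$11$ is, in fact, fine'' is therefore backwards — $11\pmod{12}$ is precisely the residue that none of the three direct constructions can absorb.
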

\begin{proof}
We have $24n+15=12(2n+1)+3$ and $G_1\times V_{2n+1}$ is a pertinent group of order $12(2n+1)$. 
Hence, by Proposition \ref{matrioska3}, it is enough to prove the existence of a $(G_1\times V_{2n+1},G_1\times V_i,3,1)$-RDF with $i=1$ or 3.

\medskip\noindent
\begin{description}
\item[]\underline{Case 1)}: $2n+1\equiv0$ (mod 3).
\end{description}

Set $e=2$ if $9$ is a component of $2n+1$, otherwise set $e=1$. Now let $(2n+1)/3^e = PQ$ 
where $P$ and $Q$ are the product of all the components of $(2n+1)/3^e$ congruent to 3 
and 1 (mod 4), respectively. Note that $3$ is not a component of $P$, otherwise
$3^{e+1}=
\begin{cases}
9  & \text{if $e=1$}\\
27 & \text{if $e=2$}
\end{cases}$
would be a component of $2n+1$, contradicting the definition of the integer $e$.

Consider the group $G=G_1\times V_{2n+1}$ and its subgroups $L=G_1\times V_{3^eQ}$ and 
$H=\{0\}\times V_{Q}$.
Since $G/H\simeq G_1\times V_{3^eP}$ and $L/H\simeq G_1\times V_{3^e}$, 
by Corollaries \ref{36P} and \ref{12*9P}, there exists
a $(G/H,L/H,3,1)$-RDF. Also, there exists a homogeneous $(H,3,1)$-DM by Theorem \ref{HDM}.
Thus, by Theorem \ref{DF+DM}(i), there exists a $(G, L, 3, 1)$-RDF.
There is also a $(L, G_1\times V_{3^{2-e}},3,1)$-RDF by 
Theorem \ref{15mod24} (when $e=2$)
and Corollary \ref{36Q} (when $e=1$). Therefore, by applying Proposition \ref{matrioska} 
with the chain $G \geq L \geq G_1\times V_{3^{2-e}}$
we get a $(G, G_1\times V_{3^{2-e}}, 3, 1)$-RDF.

\medskip\noindent
\begin{description}
\item[]\underline{Case 2)}: the square-free part of $2n+1$ does not contain any prime congruent to $11$ (mod 12).
\end{description}

\noindent
We can assume that $2n+1$ is not divisible by 3 in view of Case 1). Thus, by assumption,
we can write $2n+1=PQ$ where $P$ is the product of all components of $2n+1$ that are congruent to 7 modulo 12
and $Q$ is the product of all components of $n$ that are congruent to 1 modulo 4.
Of course it is understood that $P$ and/or $Q$ may be equal to 1 in the case that the components of
the respective kinds do not exist. 
If $P=1$ or $Q=1$, we have a $(G_1\times V_{2n+1},G_1\times V_1,3,1)$-RDF by Theorem \ref{15mod24}
or Theorem \ref{15mod24bis}, respectively.
If both $P$ and $Q$ are greater than 1, consider the group $G=G_1\times V_{2n+1}$ and its subgroups $L=G_1\times V_P$ and $H=\{0\}\times V_P$.
We have $G/H\simeq G_1\times V_Q$ and $L/H\simeq G_1\times V_1$.
Thus there exists a $(G/H,L/H,3,1)$-RDF by Theorem \ref{15mod24}. Also, there exists a homogeneous $(H,3,1)$-DM
by Theorem \ref{HDM}. It follows,  by Theorem \ref{DF+DM}(i), that there exists a $(G,L,3,1)$-RDF,
i.e. a $(G_1\times V_{2n+1},G_1\times V_P,3,1)$-RDF. We also have a $(G_1\times V_P,G_1\times V_1,3,1)$-RDF
by Theorem \ref{15mod24bis}. Applying Proposition \ref{matrioska} with the chain $G_1\times V_{2n+1} \geq G_1\times V_P \geq G_1\times V_1$
we get a $(G_1\times V_{2n+1},G_1\times V_1,3,1)$-RDF.
\end{proof}


\begin{rem}\label{rem(ii)}
We recall that Theorems \ref{15mod24}
and \ref{15mod24bis}, and Corollaries \ref{36P} and \ref{12*9P}
show the existence of a group of strong multipliers. Therefore, it is not difficult to check
that Propositions \ref{matrioska}
and \ref{matrioska3} guarantee that
\begin{enumerate}
  \item the number of symmetries of each KTS$(72n'+39)$ obtainable via Theorem \ref{main(ii)}.(1)
 is at least equal to $m(72n'+39)$, where $m$ is the greatest odd divisor of $\psi(P)$ and $P>1$ is the product of all the components of $2n'+1$ congruent to $7$ or $11$ (mod 12);
  \item the number of symmetries of each KTS$(24n+15)$ built in Theorem \ref{main(ii)}.(2)
 is at least equal to $m(24n+15)$ where $m$ is defined as follows:
 \begin{enumerate} 
 \item if $Q>1$ is the product of all the components of $2n+1$ congruent to 1 (mod 4), then $m$ is
the greatest odd divisor of $\psi(Q)$;
 \item if all the components of $2n+1$ are congruent to 7 (mod 12), then $m$ is
 the greatest odd divisor of $\psi(2n+1)$ coprime with $6$.
 \end{enumerate}
\end{enumerate}
\end{rem}

\subsection{$3$-pyramidal $KTS(48n+3)$}
In this subsection we will prove that the necessary condition for the existence of a KTS$(v)$ is also sufficient
when $v\equiv3$ (mod 48), that is Theorem \ref{main}(iii).

We recall that $\{G_\alpha: \alpha\geq1\}$ is the series of pertinent groups considered in Section 2. 
For $0\leq i\leq \alpha-1$,  the subgroup of $G_\alpha$ with underlying-set $\Z_3\times2^i\Z_{2^{\alpha}}\times2^i\Z_{2^{\alpha}}$ 
is isomorphic to $G_{\alpha-i}$. Hence, by abuse of notation, this subgroup will be denoted by $G_{\alpha-i}$ in the following.

\begin{thm}\label{main(iii)}
There exists a KTS$(4^e48n+3)$ for every non-negative integer $e$ and every positive odd integer $n$.
\end{thm}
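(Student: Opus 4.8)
The plan is to reduce the existence of a KTS$(4^e48n+3)$ to the existence of a suitable $3$-pyramidal KTS, and then to assemble such a system from the building blocks developed in Sections 7--10. By Proposition~\ref{matrioska3} it suffices to exhibit a $(G,H,3,1)$-RDF where $G$ is pertinent of order $4^e48n = 4^e\cdot 48n$ and $H$ is isomorphic to one of $D$, $G_1$, $D\times V_5$, $G_1\times V_3$, or $G_2$. The natural choice of $G$ here is $G_{\alpha}\times V_n$ (with the identification $G_2 = G_2$, $\alpha = e+1$ so that $|G_\alpha| = 4^\alpha 3 = 4^{e+1}3$ and $4^\alpha 3 n = 4^e \cdot 12n$; one checks the order bookkeeping carefully so that $|G_\alpha\times V_n|+3 = 4^e48n+3$), where $n$ is odd and can be decomposed according to the congruence classes mod $4$ of its components.

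First I would split $n$ as $n = n_1 n_3$ where $n_1$ is the product of the components of $n$ congruent to $1$ mod $4$ and $n_3$ is the product of the components congruent to $3$ mod $4$; note $3$ itself may or may not appear among the components of $n_3$. The strategy is then to peel off the factor $V_n$ and the powers of $4$ one layer at a time, always keeping a fixed involution $j$ (the canonical one) so that Proposition~\ref{matrioska} applies to concatenate the resulting resolvable difference families along a chain of subgroups. Concretely: using Corollary~\ref{48P} (or Corollary~\ref{48Q} for the $n_1$-part) one handles the passage from $G_2\times V_m$ down to $G_2\times V_1$ or $G_1\times V_m$; using Theorems~\ref{15mod24}, \ref{15mod24bis}, and Corollaries~\ref{36P}, \ref{12*9P}, \ref{36Q} one handles the various $G_1\times V_\bullet$ layers; and for the higher powers $4^e$ one needs the splittable difference matrices of Examples~\ref{(16,3,1)-DM} (over $\Z_4\times\Z_4$) fed into Theorem~\ref{DF+DM}(ii), since $G_{\alpha}/G_{\alpha-1}$ is built from a $\Z_4\times\Z_4$-type quotient. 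One also needs a base case for the $3$-part: the $(G_1,\{2^3,3\},3,1)$-RDF of Subsection~\ref{15}, the $(G_2,\{2^3,3\},3,1)$-RDF of Subsection~\ref{51}, and their products with odd groups via Proposition~\ref{matrioska2}.

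The delicate point is the tower of $4$'s. For $e\ge 1$ I would argue by induction on $e$, showing that a $(G_{\alpha}\times V_n, G_{\alpha-1}\times V_n, 3, 1)$-RDF (fixing the canonical involution) exists: here $H = G_{\alpha-1}\times V_n$ is normal in $G = G_\alpha\times V_n$ with quotient of order $16$, and that quotient is essentially $\Z_3\ltimes(\Z_4\times\Z_4)$ modulo the $\Z_3$ already absorbed, so one wants a doubly disjoint difference family in the quotient combined with the $\{0,j\}$-splittable $(\Z_4\times\Z_4,3,1)$-DM of Example~\ref{(16,3,1)-DM} through Theorem~\ref{DF+DM}(ii). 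Iterating down the chain $G_{e+1}\times V_n \ge G_e\times V_n \ge \cdots \ge G_1\times V_n$ and then handling the $G_1\times V_n$ base by the $24n+15$ machinery of Theorem~\ref{main(ii)} (or directly, since $4^e48n+3$ with $e=0$ is $48n+3 = 12(4n+1)+3$, so $G_1\times V_{4n+1}$ and the results for $2n'+1 = 4n+1$ apply), one reaches a pertinent subgroup isomorphic to one of the five listed groups, and Proposition~\ref{matrioska3} finishes the job. I expect the main obstacle to be the careful case analysis of the components of $n$ modulo $4$ (and whether $3 \mid n$), since different factors must be routed through different corollaries, and verifying at each stage that the fixed involution $j$ is genuinely inherited so that Proposition~\ref{matrioska} chains the pieces into a single $J$-resolvable DF for the full group.
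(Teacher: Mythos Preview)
Your overall architecture---assemble a $J$-resolvable DF in a pertinent group by chaining RDFs along a tower of subgroups via Proposition~\ref{matrioska}, using Theorem~\ref{DF+DM} with the various homogeneous and splittable DMs---matches the paper. But several of the concrete steps are miscalculated and one of them is a genuine gap.

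First, the bookkeeping: since $|G_\alpha|=3\cdot 4^\alpha$, you need $\alpha=e+2$ (not $e+1$) to get $|G_\alpha\times V_n|=4^e\cdot 48n$. Relatedly, your base case $e=0$ is wrong: $12(4n+1)+3=48n+15$, not $48n+3$; the correct base group is $G_2\times V_n$, not $G_1\times V_{4n+1}$. Second, the step in the $4$-tower is also off: $G_{\alpha-1}$ has index $4$ in $G_\alpha$, not $16$, so you cannot invoke the $\Z_4\times\Z_4$ splittable DM there. The paper instead takes $H\simeq\Z_4\times\Z_4$ as the subgroup $\{0\}\times 2^{\alpha-2}\Z_{2^\alpha}\times 2^{\alpha-2}\Z_{2^\alpha}$, so that $G_\alpha/H\simeq G_{\alpha-2}$; the induction therefore descends two levels at a time and needs \emph{two} explicit starting RDFs, namely $(G_2,G_1,3,1)$ from Subsection~\ref{51} and $(G_3,G_2,3,1)$ from Appendix~\ref{G3,G2,3,1RDF}.

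The real gap is your plan to ``handle the $G_1\times V_n$ base by the $24n+15$ machinery of Theorem~\ref{main(ii)}''. Theorem~\ref{main(ii)} is \emph{not} unconditional: for $n$ coprime to $3$ it requires that no prime $p\equiv11\pmod{12}$ appear to an odd power. So routing the odd part through $G_1\times V_n$ would leave infinitely many $n$ uncovered. The paper avoids this by staying at the $G_2$ level for $n\not\equiv0\pmod3$: it splits $n=PQ$ with $P$ (resp.\ $Q$) the product of components $\equiv3$ (resp.\ $\equiv1$) mod $4$, uses Corollary~\ref{48P} for the $P$-part and Corollary~\ref{48Q} for the $Q$-part (these together cover \emph{all} odd components $>3$), and only then descends $G_2\to G_1$ via the $(G_2,G_1,3,1)$-RDF. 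Only when $3\mid n$ does the paper pass through $G_1\times V_n$, and there Case~1 of Theorem~\ref{main(ii)} applies unconditionally.
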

\begin{proof}
Set $\alpha=e+2$ and note that $G_\alpha\times V_n$ is a pertinent group of order $4^e48n$.
Hence, by Proposition \ref{matrioska3}, it is enough to prove the existence of a 
 $(G_{{\alpha}}\times V_n,G_1\times V_i,3,1)$-RDF with $i=1$ or 3 for any $\alpha\geq2$ and any odd $n\geq1$.
 
We distinguish five cases.

\medskip
\underline{1st case}: $n=1$.

\smallskip\noindent
Let us prove the existence of a $(G_{{\alpha}},G_1,3,1)$-RDF for every $\alpha\geq2$.
A $(G_2,G_1,3,1)$-RDF has been given in Subsection \ref{51} and a
$(G_3,G_2,3,1)$-RDF can be found in Appendix \ref{G3,G2,3,1RDF}. 
Now let $\alpha\geq4$ and assume, by induction, that there exists a $(G_{{\beta}},G_{\beta-1},3,1)$-RDF for $2\leq\beta<\alpha$.
Set $G=G_{\alpha}$, $L=G_{\alpha-1}$, and let $H$ be the subgroup of $G$ with underlying set
$\{0\}\times2^{\alpha-2}\Z_{2^{\alpha}}\times2^{\alpha-2}\Z_{2^{\alpha}}$. 
Note that $H$ is isomorphic to $\Z_4\times\Z_4$ so that there exists a splittable $(H,3,1)$-DM
by Example \ref{(16,3,1)-DM}. The quotient groups $G/H$ and $L/H$ are isomorphic 
to $G_{\alpha-2}$ and $G_{\alpha-3}$, respectively.
Thus, by the induction hypothesis, there exists a $(G/H,L/H,3,1)$-RDF.
Applying Theorem \ref{DF+DM}(ii) we get a $(G,L,3,1)$-RDF, i.e., a $(G_{\alpha},G_{\alpha-1},3,1)$-RDF.
Applying Proposition \ref{matrioska} with the chain $G_\alpha \geq G_{\alpha-1} \geq \dots \geq G_2 \geq G_1$
we get a $(G_{{\alpha}},G_1,3,1)$-RDF.

\medskip
\underline{2nd case}: $n=3$.

\smallskip\noindent
A $(G_2\times V_3,G_1\times V_3,3,1)$-RDF will be given in Appendix \ref{G2xV3,G1xV3,3,1RDF}. 
Let $\alpha\geq3$ and let $\beta$ be any integer of the closed interval $[3,\alpha]$.
Consider the group $G=G_{\beta}\times V_3$ and its subgroups $L=G_{\beta-1}\times V_3$ and $H=K\times V_3$
where $K$ is the Klein subgroup of $G_\beta$.
We have $G/H\simeq G_{\beta-1}$ and $L/H\simeq G_{\beta-2}$ so that there exists a $(G/H,L/H,3,1)$-RDF (see 1st case).
Also, $H\simeq\Z_2\times\Z_6$ so that there exists a splittable $(H,3,1)$-DM
by Example \ref{(12,3,1)-DMbis}. Thus, by Theorem \ref{DF+DM}(ii),  there exists a $(G,L,3,1)$-RDF,
i.e., a $(G_\beta\times V_3,G_{\beta-1}\times V_3,3,1)$-RDF.

Applying Proposition \ref{matrioska} with the chain $$G_\alpha\times V_3 \geq G_{\alpha-1}\times V_3 \geq \dots \geq G_2\times V_3 \geq G_1\times V_3$$
we get a $(G_{{\alpha}}\times V_3,G_1\times V_3,3,1)$-RDF.

\medskip
\underline{3rd case}: $3<n\equiv0$ (mod 3). 

\smallskip\noindent
Consider the group $G=G_\alpha\times V_n$ and its subgroups $L=G_1\times V_n$ and $H=\{0\}\times V_n$.
We have $G/H\simeq G_\alpha$ and $L/H\simeq G_1$ so that there exists a $(G/H,L/H,3,1)$-RDF (see first case).
There also exists a homogeneous $(H,3,1)$-DM by Theorem \ref{HDM}. It follows that there exists a 
$(G,L,3,1)$-RDF, i.e., a $(G_\alpha\times V_n,G_1\times V_n,3,1)$-RDF by Theorem \ref{DF+DM}(i). From the proof of Case 1) of
Theorem \ref{main(ii)} we also have a  $(G_1\times V_n,G_1\times V_i,3,1)$-RDF with $i=1$ or $3$. Thus we have a
$(G_\alpha\times V_n,G_1\times V_i,3,1)$-RDF by Proposition \ref{matrioska}.

\medskip
\underline{4th case}: $1<n\not\equiv0$ (mod 3) and $\alpha=2$.

\smallskip\noindent
Write $n=PQ$ where $P$ is the product of all the components of $n$ congruent to $3$ (mod 4)
and $Q$  is the product of all the components of $n$ congruent to $1$ (mod 4).
If $Q=1$, we get the required $(G_2\times V_n,G_2\times V_1,3,1)$-RDF from Corollary \ref{48P}.
If $Q>1$, consider the group $G=G_2\times V_n$ and its subgroups $L=G_2\times V_Q$ and $H=\{0\}\times V_Q$.
We have $G/H\simeq G_2\times V_P$ and $L/H\simeq G_2$ so that there exists a $(G/H,L/H,3,1)$-RDF either trivially if $P=1$,
or by Corollary \ref{48P} if $P>1$.
There also exists a homogeneous $(H,3,1)$-DM by Theorem \ref{HDM}. It follows that there exists a 
$(G,L,3,1)$-RDF, i.e., a $(G_2\times V_n,G_2\times V_Q,3,1)$-RDF by Theorem \ref{DF+DM}(i).  We also
have a  $(G_2\times V_Q,G_2\times V_1,3,1)$-RDF because of Corollary \ref{48Q} and, from the first case, 
a $(G_2\times V_1,G_1\times V_1,3,1)$-RDF. Thus, applying Proposition \ref{matrioska} with the chain 
$G_2\times V_n\geq G_2\times V_Q\geq G_2\times V_1\geq G_1\times V_1$  
we finally get a $(G_2\times V_n,G_1\times V_1,3,1)$-RDF.

\medskip
\underline{5th case}: $n>3$ and $\alpha>2$.

\smallskip\noindent
Let $\alpha\geq3$ and let $2\leq\beta\leq\alpha$.
Consider the group $G=G_{\beta}\times V_n$ and its subgroups $L=G_{\beta-1}\times V_n$ and $H=\{0\}\times V_n$.
We have $G/H\simeq G_\beta$ and $L/H\simeq G_{\beta-1}$ so that there exists a $(G/H,L/H,3,1)$-RDF
(see 1st case). Also, there exists a homogeneous $(H,3,1)$-DM by Theorem \ref{HDM}.
Thus  there exists a $(G,L,3,1)$-RDF, i.e., a $(G_\beta\times V_n,G_{\beta-1}\times V_n,3,1)$-RDF by Theorem \ref{DF+DM}(i).
Applying Proposition \ref{matrioska} with the chain $$G_\alpha\times V_n \geq G_{\alpha-1}\times V_n \geq \dots \geq G_2\times V_n$$
we get a $(G_{{\alpha}}\times V_n,G_2\times V_n,3,1)$-RDF. From either the third or the fourth case we also have a $(G_2\times V_n,G_1\times V_i,3,1)$-RDF
with $i=1$ or $3$. Then, by Proposition \ref{matrioska} again, we have a $(G_{{\alpha}}\times V_n,G_1\times V_i,3,1)$-RDF with $i=1$ or 3.
\end{proof}



\begin{rem}\label{rem(iii)}
We recall that Corollary \ref{48P}
shows the existence of a group of strong multipliers. Therefore, it is not difficult to check
that Propositions \ref{matrioska}
and \ref{matrioska3} guarantee that
the number of symmetries of each KTS$(48n+3)$ obtainable via Theorem \ref{main(iii)}, when $n$ is not divisible by $3$, is at least equal to $m(48n+3)$, where $m$ is the greatest odd divisor of 
$\psi(P)$ and $P>1$ is the product of all the components of $n$ congruent to $3$ (mod 4).
\end{rem}

\section{Open problems}

The problem of classifying the 3-pyramidal KTS$(v)$ remains open in the following cases.
\begin{itemize}
\item $v-3=24n+6$ and $4n+1$ is not a sum of two squares;
\item $v-3=72n\pm12$ and its prime decomposition contains a prime factor $p\equiv11$ $($mod $12)$ raised to an odd power;
\end{itemize}
The open cases above could be closed if one solves the following problems, respectively.

\begin{prob}
Determine a $(D\times V_{pq},D\times V_1,3,1)$-RDF for every pair $(p,q)$ of distinct primes congruent to $3$ modulo $4$.
\end{prob}

\begin{prob}
Determine a $(G_1\times V_p,G_1\times V_1,3,1)$-RDF for every prime $p\equiv 11$ modulo $12$.
\end{prob}

Our research naturally leads to consider also the following collateral problems which, in our opinion, are interesting on their own.

\begin{prob}
Determine the admissible groups $H$ for which there exists a splittable $(H,3,1)$ difference matrix.
\end{prob}

\begin{prob}
Determine the set of all values of $n$ for which there exists a doubly disjoint $(\Z_3\times V_{2n+1},\Z_3\times V_1,3,1)$ difference family.
\end{prob}

\bigskip
\footnotesize
\noindent\textit{Acknowledgments.}
The authors gratefully acknowledge support from GNSAGA of Istituto Nazionale di Alta Matematica.

\bigskip\Large
\begin{appendix}{\bf Appendix}
\normalsize

\section{Pseudo-resolvable $(G_1\times V_9,\{2^3,3\},3,1)$-DF}\label{G1xV9,3,1PRDF}
Let $\Sigma$ be the $\{2^3,3\}$-partial spread of $G_1\times V_9$ 
whose member of order $3$ is $\{0,x,-x\}$ with $x=(1,0,0,0,0)$.
The following seventeen triples of $G_1\times V_9$ 
\small
 $$\{(0, 0, 0, 0, 1),(0, 1, 1, 2, 0),(1, 0, 1, 1, 1)\}, \quad \{(0, 0, 0, 0, 2),(2, 0, 0, 2, 0),(2, 1, 0, 0, 0)\},$$
 $$\{(0, 0, 0, 1, 0),(0, 1, 0, 2, 0),(1, 0, 0, 1, 2)\}, \quad \{(0, 0, 0, 1, 1),(2, 0, 0, 2, 1),(2, 0, 1, 2, 0)\},$$
 $$\{(0, 0, 0, 1, 2),(0, 1, 0, 1, 0),(2, 0, 1, 1, 0)\}\,\quad \{(0, 0, 0, 2, 1),(0, 1, 0, 0, 2),(1, 0, 1, 1, 2)\},$$
 $$\{(0, 0, 0, 2, 2),(0, 0, 1, 0, 1),(2, 0, 0, 1, 2)\}, \quad \{(0, 0, 1, 1, 2),(1, 1, 1, 2, 0),(2, 1, 0, 2, 2)\},$$
 $$\{(0, 0, 1, 2, 2),(1, 0, 1, 1, 0),(2, 0, 0, 2, 2)\}, \quad \{(0, 1, 0, 1, 1),(2, 0, 1, 0, 1),(2, 1, 1, 0, 0)\},$$
$$\{(0, 1, 0, 2, 1),(1, 1, 1, 2, 2),(2, 0, 0, 0, 1)\}, \quad \{(1, 0, 1, 0, 0),(1, 1, 1, 1, 0),(2, 1, 0, 1, 1)\},$$
$$\{(1, 0, 1, 0, 1),(1, 0, 1, 2, 1),(1, 0, 1, 2, 2)\}, \quad \{(1, 1, 0, 0, 2),(2, 0, 0, 1, 1),(2, 0, 1, 0, 2)\},$$
$$\{(1, 1, 0, 2, 0),(1, 1, 1, 1, 1),(2, 0, 1, 1, 2)\}, \quad \{(1, 1, 1, 0, 1),(2, 0, 1, 2, 1),(2, 1, 1, 0, 2)\},$$
$$\{(1, 1, 1, 0, 2),(1, 1, 1, 2, 1),(2, 1, 1, 1, 0)\}$$

\normalsize\noindent
are the blocks of a $(G_1\times V_9,\Sigma,3,1)$-PRDF.

\section{Pseudo-resolvable $(G_2,\{2^3,3\},3,1)$-DF}\label{G2,3,1PRDF}
Let $\Sigma$ be the $\{2^3,3\}$-partial spread of $G_2$ 
whose member of order $3$ is $\{0,x,-x\}$ with $x=(1,0,0)$.
The following seven triples of $G_2$ 
$$\{(0, 0, 1), (0, 3, 1), (2, 1, 2)\},\quad\{(0, 1, 0), (1, 0, 3), (2, 3, 2)\},$$
$$\{(0, 1, 1), (1, 3, 3), (2, 0, 2)\},\quad\{(0, 1, 2), (1, 0, 2), (2, 1, 3)\},$$
$$\{(0, 2, 1),(2, 0, 0), (2, 0, 1)\},\quad\{(1, 1, 0), (1, 2, 3), (1, 3, 1)\},$$
$$\{(1, 1, 2), (2, 0, 3), (2, 3, 3)\},$$

\noindent
are the blocks of a pseudo-resolvable $(G_2,\Sigma,3,1)$-DF.

\section{$(G_2\times V_3,G_1\times V_3,3,1)$-RDF}\label{G2xV3,G1xV3,3,1RDF}
The following eighteen triples of $G_2\times V_3$
\small
$$\{(0, 0, 1, 0), (1, 3, 1, 2), (2, 1, 0, 1)\},\quad \{(0, 0, 1, 1), (0, 1, 0,  2), (1, 3, 3, 2)\},$$ 
$$\{(0, 0, 1, 2), (0, 3, 3, 1), (1, 1, 2, 0)\},\quad \{(0, 0, 3, 0), (2, 3, 1, 2), (2, 3, 2, 0)\},$$ 
$$\{(0, 0, 3, 1), (0, 1, 3, 0), (1, 1, 0, 0)\}, \quad \{(0, 1, 0, 0), (1, 3, 1, 1), (2, 0, 1, 0)\},$$ 
$$\{(0, 1, 0, 1), (1, 0, 3, 1), (2, 3, 1, 1)\}, \quad \{(0, 1, 1, 0), (2, 2, 1, 2), (2, 3, 0, 1)\},$$ 
$$\{(0, 1, 2, 0), (0, 2, 1, 2), (1, 3, 3, 0)\}, \quad \{(0,1, 2, 2), (0, 3, 1, 1), (2, 0, 1, 1)\},$$ 
$$\{(0, 1, 3, 2), (1, 0, 1, 2), (2, 3, 0, 0)\}, \quad \{(0, 3, 0, 1), (1, 0, 3, 2), (2, 1, 3, 0)\},$$ 
$$\{(0, 3, 3, 2), (1, 0, 3, 0), (2, 3, 0, 2)\}, \quad \{(1, 0, 1, 0), (1, 1, 2, 2), (1, 1, 3, 0)\},$$
$$\{(1, 0, 1, 1), (1, 1, 1, 1), (1, 1, 2, 1)\}, \quad \{(1, 1, 0, 1), (2, 0, 3, 0), (2, 1, 1, 0)\},$$ 
$$\{(1, 1, 0, 2), (2, 1, 1, 2), (2, 2, 1, 1)\}, \quad \{(2, 1, 0, 2), (2, 1, 1, 1), (2, 2, 3, 2)\}$$

\noindent\normalsize
are the blocks of a $(G_2\times V_3,G_1\times V_3,3,1)$-RDF.

\normalsize

\section{$(G_3,G_2,3,1)$-RDF}\label{G3,G2,3,1RDF}
The following twentyfour triples of $G_3$
$$\{(0, 0, 1), (0, 5, 2), (0, 7, 5)\}, \quad \{(0, 0, 3), (2, 1, 1), (2, 5, 2)\},$$
$$\{(0, 0, 5), (2, 1, 7), (2, 3, 6)\},\quad \{(0, 0, 7), (0, 1, 1), (2, 7, 0)\},$$
$$\{(0, 1, 0), (0, 7,  3), (2, 6, 1)\},\quad \{(0, 1, 4), (1, 4, 7), (2, 7, 5)\},$$ 
$$\{(0, 1, 5), (0, 5, 6), (2, 6, 7)\}, \quad \{(0, 1,  7), (1, 3, 2), (2, 4, 5)\},$$
$$\{(0, 2, 1), (2, 3,  5), (2, 5, 0)\},\quad\{(0, 2, 5), (1, 1, 4), (1, 1,  5)\},$$
$$\{(0, 3, 0), (1, 1, 1), (1, 6, 1)\}, \quad \{(0, 3,  3), (2, 0, 3), (2, 1, 0)\},$$
$$\{(0, 3, 5), (1, 3,  6), (2, 2, 1)\}, \quad \{(0, 3, 6), (1, 2, 3), (1, 3,  5)\},$$
$$\{(0, 5, 7), (0, 7, 0), (1, 4, 5)\},\quad\{(0, 6, 3), (1, 1, 2), (2, 3, 7)\},$$
$$\{(0, 6, 7), (1, 3, 3), (1, 5, 2)\}, \quad \{(0, 7, 6), (2, 6, 3), (2, 7, 7)\},$$
$$\{(1, 1, 0), (1, 4, 3), (2, 1, 5)\}, \quad \{(1, 1, 3), (1, 4, 1), (2, 7, 4)\},$$
$$\{(1, 1, 7), (2, 1, 2), (2, 4, 3)\},\quad \{(1, 3, 7), (2, 4, 1), (2, 7, 6)\},$$
$$\{(1, 6, 3), (1, 7, 4), (2, 5, 7)\}, \quad \{(1, 6, 5), (1, 7, 0), (1, 7, 5)\}$$

\noindent\normalsize
are the blocks of a $(G_3,G_2,3,1)$-RDF.

\end{appendix}


\begin{thebibliography}{99}

\bibitem{AFL}
I.J. Anderson, N.J. Finizio, P.A. Leonard,
{\it New product theorems for $Z$-cyclic whist tournaments},
J. Combin. Theory Ser. A {\bf88} (1999), 162--166.

\bibitem{BJL} T. Beth, D. Jungnickel and H. Lenz,
Design Theory. Cambridge University Press, Cambridge, 1999.


\bibitem{BBRT} S. Bonvicini, M. Buratti, G. Rinaldi, T. Traetta,
{\it Some progress on the existence of $1$-rotational Steiner triple systems},
Des. Codes Cryptogr. {\bf 62} (2012), 63--78.

\bibitem{recursive} M. Buratti,
{\it Recursive constructions for difference matrices 
and relative difference families}, 
J. Combin. Des. {\bf 6} (1998), 165--182.

\bibitem{B99} M. Buratti,
{\it Old and new designs via difference multisets and strong difference families}, 
J. Combin. Des. {\bf 7} (1999), 406-425.

\bibitem{B} M. Buratti,
{\it $1$-rotational Steiner Triple Systems over arbitrary groups},
J. Combin. Des. {\bf 9} (2001), 215-226.



\bibitem{disjoint}
M. Buratti, {\it On disjoint $(v,k,k-1)$ difference families},
Des. Codes Cryptogr. {\bf87} (2019), 745--755.


\bibitem{BuGhi} M. Buratti, D. Ghinelli,
{\it On disjoint $(3t,3,1)$ cyclic difference families},
J. Statist. Plann. Inference {\bf 140} (2010), 1918--1922.

\bibitem{BuGio} M. Buratti, L. Gionfriddo,
{\it Strong difference families over arbitrary groups},
J. Combin. Des. {\bf16} (2008), 443-461.

\bibitem{BP} M. Buratti, A. Pasotti,
{\it Combinatorial designs and the theorem of Weil
on multiplicative character sums}, Finite Fields Appl.  {\bf 15}, (2009), 332--344.

\bibitem{BN} M. Buratti, A. Nakic,
{\it Transrotional Kirkman triple systems}, in preparation.

\bibitem{BRT} M. Buratti, G. Rinaldi, T. Traetta,
{\it $3-$pyramidal Steiner triple systems}, Ars Math. Contemp. {\bf 13}, (2017), 95--106.


\bibitem{BZ} M. Buratti, F. Zuanni,
{\it $G$-invariantly resolvable Steiner $2-$designs which are $1$-rotational over $G$},
Bull. Belg. Math. Soc. {\bf 5}, (1998), 221--235.

\bibitem{BZ01} M. Buratti, F. Zuanni,
{\it Explicit constructions for $1$-rotational Kirkman triple systems},
Utilitas Math. {\bf 59}, (2001), 27--30.



\bibitem{CCFW} Y. Chang, S. Costa, T. Feng, X. Wang, 
{\it Strong difference families of special types},
Discrete Math. {\bf343}, (2020).

\bibitem{CC}
M.J. Colbourn, C.J. Colbourn,
{\it Recursive constructions for cyclic block designs}, J. Statist. Plann. Infer. {\bf10} (1984), 97--103.

\bibitem{CD} C.J. Colbourn, J.H. Dinitz, 
Handbook of Combinatorial Designs.
Second Edition, Chapman \& Hall/CRC, Boca Raton, FL, 2006

\bibitem{CR} C.J. Colbourn, A. Rosa,
{\it Triple systems}, Clarendon Press, Oxford, 1999.

\bibitem{CFW1} S. Costa, T. Feng, X. Wang, 
{\it New $2$-designs from strong difference families},
Finite Fields Appl.  {\bf50} (2018), 391--405.

\bibitem{CFW2} S. Costa, T. Feng, X. Wang, 
{\it Frame difference families and resolvable balanced incomplete block designs}, 
Des. Codes Cryptogr. {\bf86} (2018), 2725--2745.

\bibitem{CKZ} A. Custic, V. Krcadinac, Y. Zhou,
{\it Tiling groups with difference sets},
Electron.  J.  Combin. {\bf22} (2015), $\sharp$P2.56

\bibitem{DR} J.H. Dinitz, P. Rodney, {\it Block disjoint difference families for Steiner triple systems}, Utilitas Math. {\bf52} (1997), 153--160.

\bibitem{DS} J.H. Dinitz, N. Shalaby, {\it Block disjoint difference families for Steiner triple systems: 
$v\equiv 3$ $($mod $6)$}, J. Statist. Plann. Inference {\bf106} (2002), 77--86.

\bibitem{D} J.H. Dinitz, H.C. Williams, Number Theory and Finite Fields. In: Colbourn, C.J., Dinitz, J.H. (eds.) Handbook of Combinatorial Designs. 2nd
edn., pp. 791--818. Chapman \& Hall/CRC, Boca Raton (2006).

\bibitem{Doyen} J. Doyen,
{\it A note on reverse Steiner triple systems},
Discrete Math. {\bf1} (1972), 315--319.

\bibitem{Evans} A.B. Evans,
{\it The admissibility of sporadic simple groups},
J. Algebra {\bf321} (2009), 105--116.

\bibitem{FP} G. Falcone, M. Pavone,
{\it Kirkman's tetrahedron and the fifteen schoolgirl problem},
Amer. Math. Monthly {\bf 118} (2011), 887--900.

\bibitem{Gardner}
R.B. Gardner,
{\it Steiner triple systems with transrotational automorphisms},
Discrete Math. {\bf131} (1994), 99--104.
 
\bibitem{Ge} G. Ge,
{\it On $(g,4;1)$ difference matrices},
Discrete Math. {\bf301} (2005), 164--174.

\bibitem{GMJ} M. Genma, M. Mishima, M. Jimbo, 
{\it Cyclic resolvability of cyclic Steiner $2-$designs},
J. Combin. Des. {\bf 5}, (1997), 177--187.

\bibitem{HP} M. Hall, L.J. Paige,
{\it Complete mappings of finite groups}
Pacific J. Math., {\bf5} (1955), 541--549.

\bibitem{HRW}
H. Hanani, D.K. Ray.Chaudhuri, R.M. Wilson,
{\it On resolvable designs}, Discrete Math. {\bf3} (1972), 343--357.

\bibitem{Hawkins}
T. Hawkins, {\it The Erlanger Program of Felix Klein: Reflections on Its Place In the History of Mathematics}, 
Historia Mathematica {\bf11} (1984), 442--470.

\bibitem{Janko}
Z. Janko,
{\it A classification of finite $2$-groups with exactly three involutions},
J. Algebra {\bf291} (2005), 505--533.

\bibitem{Jung}
D. Jungnickel, {\it Composition theorems for difference families and regular planes},
Discret. Math. {\bf23} (1978), 151--158.


\bibitem{KM}
S. Kageyama, Y. Miao,
{\it A construction for resolvable designs and its generalizations},
Graphs Combin. {\bf14} (1998), 11--24.

\bibitem{K} P. Keevash, {\it The existence of designs}. Preprint,
arXiv:1401.3665.

\bibitem{Kirkman47}
T. P. Kirkman, 
On a problem in combinations, Cambridge and Dublin Math. J. {\bf2} (1847), 191--204.

\bibitem{Konvisser}
M.W. Konvisser, {\it $2$-Groups which Contain Exactly Three Involutions},
Math. Z. {\bf130}, (1973) 19--30. 

\bibitem{L} G. Lovegrove,
 {\it The automorphism groups of Steiner triple systems obtained by the Bose construction},
 J. Algebr. Comb. {\bf18} (2003), 159--170.


\bibitem{Mend} E. Mendelsohn,
{\em On the groups of automorphisms of Steiner triple and quadruple systems},
J. Combin. Theory Ser. A {\bf 25} (1978), 97-104.

\bibitem{MendRosa}
E. Mendelsohn, A. Rosa, {\it One-factorizations of the complete graph -- A survey}, J. Graph
Theory {\bf9} (1985), 43--65.

\bibitem{MR} M. Meszka, A. Rosa, {\it Cyclic Kirkman triple systems}, Congr. Numer. {\bf 188} (2007), 129--136.

\bibitem{M} M. Mishima,
{\it The spectrum of $1$-rotational Steiner triple systems over a dicyclic group},
Discrete Math. {\bf 308} (2008), 2617--2619.

\bibitem{Momihara} K. Momihara, 
{\it Strong difference families, difference covers, and their applications for relative difference families}, 
Des. Codes Cryptogr., {\bf51} (2008), 253--273.

\bibitem{PanChang}
R. Pan, Y. Chang,
{\it A note on difference matrices over non-cyclic finite abelian groups},
Discrete Math. {\bf339} (2016), 822--830.

\bibitem{P} R. Peltesohn,
\emph{Eine Losung der beiden Heffterschen Differenzenprobleme},
Compos. Math. \textbf{6} (1938), 251--257.

\bibitem{PR} K.T. Phelps, A. Rosa,
{\it Steiner triple systems with rotational automorphisms},
Discrete Math. {\bf 33} (1981), 57--66.

\bibitem{RCW} D.K. Ray-Chaudhuri, R. M. Wilson,
{\it Solution of Kirkman's Schoolgirl Problem} Combinatorics, Proc. Sympos. Pure Math., Univ. California Los Angeles 1968 {\bf 19} (1971), 187--203.


\bibitem{Rosa} A. Rosa,
{\it On reverse Steiner triple systems},
Discrete Math. {\bf2} (1972), 61--71.

\bibitem{Rosen}
K.H. Rosen. Elementary Number Theory and its Applications, Sixth Edition. Pearson, 2000.

\bibitem{S}
D.R. Stinson, {\it Frames for Kirkman triple systems}, Discrete Math. {\bf65} (1987), 289--300.

\bibitem{T} L. Teirlinck,
{\it The existence of reverse Steiner triple systems},
Discrete Math. {\bf 6} (1973), 220--245.

\bibitem{W1} R.M. Wilson,
{\it An existence theory for pairwise balanced designs, I: Composition theorems and morphisms},
J. Combin. Theory Ser. A {\bf13} (1972), 246--273.

\bibitem{W2} R.M. Wilson,
{\it An existence theory for pairwise balanced designs, II: The structure
of PBD-closed sets and the existence conjectures},
J. Combin. Theory Ser. A {\bf13} (1972), 71--79.

\bibitem{W3} R.M. Wilson,
{\it An existence theory for pairwise balanced designs, III: Proof of the existence conjectures},
J. Combin. Theory Ser. A {\bf18} (1975), 71--79.


\bibitem{YYL} J. Ying, X. Yang, Y. Li,
{\it Some $20$-regular CDP$(5, 1; 20u)$ and their applications},
Finite Fields Appl.  {\bf17} (2011), 317--328.
\end{thebibliography}
\end{document}